\newtheorem{thm}{Theorem}[section]
\newtheorem{cor}[thm]{Corollary}
\newtheorem{prop}[thm]{Proposition}
\newtheorem{conj}[thm]{Conjecture}
\theoremstyle{plain}
\newtheorem{theo}[thm]{Theorem}
\newtheorem{lem}[thm]{Lemma}
\theoremstyle{definition}
\newtheorem{defi}[thm]{Definition}
\newtheorem{eg}[thm]{Example}
\newtheorem{rem}[thm]{Remark}
\numberwithin{equation}{section}
\def\zz{\mathbb Z}
\def\nn{\mathbb N}
\def\rr{\mathbb R}
\def\ov{\overline}
\def\la{\lambda}
\def\ga{\gamma}
\def\ep{\epsilon}
\def\al{\alpha}
\def\be{\beta}
\def\ssu{\subset}
\def\wt{\widetilde}
\def\<{\langle}
\def\>{\rangle}
\def\rq{ {\text {\rm q}  } }
\def\Z{ {\text {\rm Z} } }
\def\vt{\vartheta}
\def\Q{{\text {\rm Q} } }
\def\0{{\mathbf 0}}
\def\NN{{\mathbb N}}
\def\.{\hskip.06cm}
\def\ts{\hskip.03cm}
\def\ind{{\text {\rm ind}}}
\def\bx{{\textbf{x}}}
\def\bt{{\textbf{t}}}
\def\rCr{{s}}
\def\poly{\textup{\textsf{P}}}
\def\po{\textup{\textsf{/poly}}}
\def\Pp{{\textup{\textsf{P}}}}
\def\FP{{\textup{\textsf{FP}}}}
\def\SP{{\textup{\textsf{\#P}}}}
\def\FP{{\textup{\textsf{FP}}}}
\def\NP{{\textup{\textsf{NP}}}}
\def\coNP{{\textup{\textsf{coNP}}}}
\def\Ppo{{\textup{\textsf{P/poly}}}}
\def\FPpo{{\textup{\textsf{FP/poly}}}}
\def\SigmaP{\boldsymbol{\Sigma}^{\poly}}
\def\PiP{\boldsymbol{\Pi}^{\poly}}
\def\sharpP{\textup{\textsf{\#P}}}
\def\PH{\textup{\textsf{PH}}}
\def\BPP{\textup{\textsf{BPP}}}
\def\unique{\textup{\textsf{U}}}
\def\GH{\textup{\textsf{GH}}}
\def\GPA{\textup{\textsf{GPA}}}
\def\Gzero{\textup{\textsf{G}}}
\def\SigmaG{\boldsymbol{\Sigma}^{\Gzero}}
\def\SigmaPA{\boldsymbol{\Sigma}^{\textsf{\textup{PA}}}}
\def\PiG{\boldsymbol{\Pi}^{\Gzero}}
\def\PiPA{\boldsymbol{\Pi}^{\textsf{\textup{PA}}}}
\def\GF{\mathcal{GF}}
\def\GFstar{\GF^{\.*}}
\renewcommand{\ind}[1]{\text{index}(#1)}
\def\supp{\textup{supp}}
\def\proj{\textup{proj}}
\def\cproj{\cj{\textup{proj}}}
\renewcommand\L{\mathcal{L}}
\def\spec{\textup{spec}}
\def\Z{\mathbb{Z}}
\def\R{\mathbb{R}}
\def\N{\mathbb{N}}
\def\Q{\mathbb{Q}}
\newcommand{\cj}[1]{\overline{#1}}
\newcommand{\n}{\cj{n}}
\renewcommand{\b}{\cj{b}}
\def\a{\cj{a}}
\def\c{\cj{c}}
\def\d{\cj{d}}
\newcommand{\x}{\mathbf{x}}
\renewcommand{\t}{\mathbf{t}}
\renewcommand{\u}{\mathbf{u}}
\def\zzeta{\boldsymbol{\zeta}}
\def\xxi{\boldsymbol{\xi}}
\newcommand{\y}{\mathbf{y}}
\newcommand{\z}{\mathbf{z}}
\newcommand{\floor}[1]{\lfloor#1\rfloor}
\def\polyin{\textup{poly}}
\newcommand{\ex}{\exists}
\renewcommand{\for}{\forall}
\def\tauHad{\, \star_{\tau} \,}
\def\nin{\noindent}
\def\Pr{\textup{PA}}
\newcommand{\cpl}{\backslash}
\def\M{\mathcal{M}}
\def\v{\mathbf{v}}
\def\AP{\textup{AP}}
\def\w{\mathbf{w}}
\def\phi{\ell}
\def\GFof{{\bf F}}
\def\ellR{{r}}
\title{Complexity of short generating functions}
\author[Danny Nguyen \and Igor Pak]{Danny Nguyen$^{\star}$ \and Igor~Pak$^{\star}$}
\thanks{\thinspace ${\hspace{-.45ex}}^\star$Department of Mathematics,
UCLA, Los Angeles, CA, 90095.
\hskip.06cm
Email:
\hskip.06cm
\texttt{\{ldnguyen,\ts{pak}\}@math.ucla.edu}}
\thanks{\today}
\begin{document}
\maketitle

% {\begin{center}
% \today
% \end{center}
% }

\begin{abstract}
We give complexity analysis of the class of \emph{short generating functions} (GF).
Assuming \ts \sharpP$\ts\not\subseteq$\ts\FPpo, we show that this class is
not closed under taking many intersections, unions or projections of GFs,
in the sense that these operations can increase the bit length of coefficients
of GFs by a super-polynomial factor.  We also prove that
\emph{truncated theta functions} are hard in this class.
\end{abstract}

\vskip1.5cm

\section{Introduction}

\subsection{Combinatorics and complexity of GFs}
A \emph{short generating function} (short GF) is a rational generating function written in the form
$$(\ast) \qquad f(t) \, = \,
\sum_{i=1}^M \, \frac{c_{i} \. t^{a_i}}{(1-t^{b_{i\ts 1}})\cdots (1-t^{b_{i\ts k_{i}}})}\.,
$$
where $c_{i} = p_{i}/q_{i} \in \Q ,\; a_{i},b_{ij} \in \Z$ and $b_{ij} \neq 0$ for all $i,j$.
The  \ts index$(f) \coloneqq \max \{k_1,\ldots,k_M\}$ \ts is the maximum number of terms in the denominators. This is always assumed to be bounded by some constant.
The \emph{length} $\phi(f)$ is defined as the total bit lengths
of all constants in $(*)$.  Of course, the same generating function can have many presentations
as a short GF.\footnote{We also caution the reader that in general, the word
\emph{short} in ``short GF'' only means that the GF
is given in the form ($\ast$).  It does not necessarily  mean the GF has polynomial length.}
% for example:
% $$(\diamond) \quad
% 1+ t+ t^2+ \ldots + t^8 \, = \, \frac{1}{1-t} \, - \, \frac{t^9}{1-t}
% $$
%
% t+t^2+t^4+t^7 \, = \, \frac{1}{1-t} \, - \, \frac{1}{(1-t^3)(1-t^5)}  \, - \, \frac{t^{15}}{(1-t^3)(1-t^5)}\,.
%\, = \, \frac{1}{1-t^2} - \frac{t^{4}}{1-t^2} + \frac{t}{1-t^3} - \frac{t^{10}}{1-t^3}

In this paper we initiate the study of complexity of short GFs with bounded index and
polynomial lengths.
For a finite set $S \ssu \nn$, denote by $f_S(t) = \sum_{n\in S} t^n$ the GF of~$S$.
We are interested in deciding if it is possible to write $f_S$ as a short GF with
polynomial length for a variety of sets~$S$ coming from Combinatorics, Number Theory and
Discrete Geometry.
%
% Here we measure length of $f_{S}$ against some natural parameter of $S$, for example  $\max\{\log |n| : n \in S\}$.
%
% Danny, this sentence is more confusing than clarifying.
%
Showing that some sets do not have short GFs of polynomial lengths turns out to be a surprisingly difficult problem.
We are also interested in operations on short GFs and how they affect the short GFs' lengths.

%% For a finite set $S \ssu \nn$, denote by $f_S(t) = \sum_{n\in S} t^n$ the GF of~$S$.
%% We are interested in deciding if it is possible to write $f_S$ as a short GF with
%% polynomial length for a variety of sets~$S$ coming from Combinatorics, Number Theory and
%% Discrete Geometry.

% Note that if we also require $c_1=\ldots=c_M$ as in~$(\ast)$,
% this problem becomes a partition problem into (generalized) arithmetic progressions, which is closely
% related to many problems in Arithmetic Combinatorics (see~\cite{TV}).

%% In this paper we initiate the study of complexity of short GFs with bounded index and
%% polynomial lengths.  The problem which GFs are short in this sense turn out to be
%% surprisingly difficult.  We are especially interested in operations on short GFs
%% and how these operations affect the length of the short GFs.

Our approach is motivated by ideas from the study of integer points in convex polyhedra
in fixed dimension (see~$\S$\ref{ss:finrem-bar}).
All such polyhedra turn out to have (multivariate) short GFs of polynomial lengths
(see Definition~\ref{def:short GF multi} and Barvinok's Theorem~\ref{th:Barvinok} below).
%
% An important advance by Barvinok and Woods (Theorem~\ref{th:BW}) implies that the same holds for projections of convex polyhedra.
We refer to~\cite{B2,B3} for a thorough review of past and recent work on short~GFs
in Discrete Geometry, and to Section~\ref{sec:fin-rem} for connections to Arithmetic
Combinatorics and other areas.
% in connection to integer programming.
%
% For a general generating function, which is not necessarily written in the form $(\divideontimes)$, we call it a \emph{multi-variable GF} or simply just GF.
%%Throughout the paper, we consider only GFs with $0/1$ coefficients when expanded as a Laurent series, unless otherwise mentioned.

% We now state the main results of the paper, first for specific GFs, then for
% operations with one-variable short~GFs, and then in the multivariate case.

\subsection{Squares}
Define the \emph{truncated theta function} to be the GF over squares $\le 2^\ellR$~:
$$\vt_\ellR(t) \, = \, \sum_{n=0}^{2^{\ellR/2}} \. t^{n^2}\,.
$$

\begin{conj}[= Conjecture~\ref{conj:squares_long}]  \label{conj:squares-intro}
For every fixed $k\ge 1$, the truncated theta function $\vt_\ellR(t)$ \emph{cannot} be written a short GF
of length \. $\polyin(\ellR)$ \ts and \ts {\rm index}$(\vt_\ellR)\le k$.
\end{conj}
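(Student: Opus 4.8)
The plan is to argue by contradiction: assume $\vt_\ellR$ admits a short GF of length $\polyin(\ellR)$ with index $\le k$, and deduce that several number-theoretic functions believed to be intractable would be computable by polynomial-size circuits. Two preliminary observations drive the reduction. First, \emph{coefficient extraction}: if $f$ is a short GF of length $L$ with index $\le k$, then the coefficient of $t^N$ in $f$ is computable in time $\polyin(L,\log N)$. After normalizing each term to positive exponents $b_{ij}>0$ (absorbing signs and monomial factors into $c_i$ and $a_i$), the coefficient of $t^N$ in $c_i t^{a_i}/\bigl((1-t^{b_{i,1}})\cdots(1-t^{b_{i,k_i}})\bigr)$ equals $c_i$ times the number of nonnegative integer solutions of $b_{i,1}m_1+\dots+b_{i,k_i}m_{k_i}=N-a_i$, a lattice-point count in a simplex of the fixed dimension $k_i-1$ that the Barvinok machinery (Theorem~\ref{th:Barvinok}) evaluates in polynomial time; summing over the $\polyin(L)$ terms yields a rational of $\polyin(L,\log N)$ bit length. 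Second, \emph{closure under $O(1)$ products}: multiplying $j$ short GFs of length $\polyin(\ellR)$ and index $\le k$ yields one of index $\le jk$ whose length stays $\polyin(\ellR)$ when $j$ is fixed. (It is only \emph{unbounded} products — equivalently, many intersections — that can force one out of the class.)

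Granting these, put $\vt^{\ast}(t)\coloneqq 2\vt_\ellR(t)-1=\sum_{|n|\le 2^{\ellR/2}}t^{n^2}$, which is a short GF of the same length and index as $\vt_\ellR$ up to constants. For fixed $j$, the power $\vt^{\ast}(t)^j$ is then a short GF of length $\polyin(\ellR)$ with index $\le jk$, and since every representation of $N\le 2^{\ellR}$ as a sum of $j$ squares uses integers of absolute value at most $\sqrt N\le 2^{\ellR/2}$, the coefficient of $t^N$ in $\vt^{\ast}(t)^j$ equals $r_j(N)$, the classical number of representations of $N$ as an ordered sum of $j$ integer squares. Hence $r_j(N)$ would be computable in time $\polyin(\ellR)=\polyin(\log N)$ given one advice string of size $\polyin(\ellR)$ — the short GF — for each value of $\ellR$. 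Taking $j=4$ and invoking Jacobi's four-square theorem, $r_4(N)=8\,\sigma(N)$ for odd $N$ (and $24\,\sigma(N_{\mathrm{odd}})$ in general), so the sum-of-divisors function $\sigma$ would lie in $\FPpo$; by the classical (randomized, hence absorbable into the advice) reduction of integer factorization to the evaluation of $\sigma$, this would place $\mathrm{FACTORING}$ in $\FPpo$. The cases $j=2,3$ reinforce the point: $r_2(N)=4\bigl(d_1(N)-d_3(N)\bigr)$ detects whether every prime $\equiv 3\pmod 4$ divides $N$ to an even power, and $r_3(N)$ is governed by Hurwitz class numbers, so a polynomial-length short GF for the squares would simultaneously trivialize much of the multiplicative and binary-quadratic-form machinery of elementary number theory.

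The step I expect to be the genuine obstacle — and the reason this is stated as a conjecture rather than a theorem — is strengthening the conclusion from $\mathrm{FACTORING}\in\FPpo$ to $\SP\subseteq\FPpo$, which is what would bring it in line with the paper's other (conditional) results. With only $O(1)$ ambient operations on short GFs available, every number we can extract is a coefficient of some fixed power of a theta function — a quantity attached to quadratic forms and modular forms — and such quantities, while widely believed to be hard, are not known to be $\SP$-hard; indeed, $\SP$-hardness of computing $\sigma$ would collapse the polynomial hierarchy into $\Pp^{\mathrm{FACTORING}}$, which is regarded as implausible. To reach the $\SP$ conclusion one would instead need to show that a \emph{fixed} polynomial-length short GF for $\vt_\ellR$ can simulate the unbounded Hadamard products whose absence from the class is (conditionally) established elsewhere in the paper — concretely, that iterated knapsack-type convolutions can be routed through the quadratic map $n\mapsto n^2$ using only boundedly many short-GF operations. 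This amounts to encoding multiplication, or long products, via squaring while staying inside the class, and I do not see how to carry it out; absent such an encoding, the cleanest rigorous consequence of the non-existence of short GFs for $\vt_\ellR$ remains the factoring-conditional statement, and the full unconditional conjecture stays open.
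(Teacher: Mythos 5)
The statement you were asked to prove is a conjecture: the paper does not prove it, and neither do you. What you have written is a survey of conditional evidence together with the (correct) admission that the unconditional statement remains open. Your first block of evidence --- polynomial-time coefficient extraction from a bounded-index short GF, closure under $O(1)$-fold products, the fourth power of the theta function, Jacobi's formula expressing $r_4$ via $\sigma$, and the randomized reduction from factoring to evaluating $\sigma$ --- reproduces the paper's Theorem~\ref{th:squares-factoring} essentially verbatim. (The paper implements coefficient extraction as Proposition~\ref{prop:supp_is_P}, a Hadamard product with a monomial followed by evaluation at $t=1$, rather than by direct lattice-point counting in a simplex, but these are the same computation.)

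Where you go wrong is the final paragraph. You assert that strengthening the conclusion from $\mathrm{FACTORING}\in\FPpo$ to $\SP\subseteq\FPpo$ is ``the genuine obstacle'' and that you do not see how to carry it out. The paper does carry it out, as Theorem~\ref{t:squares-sharp}, and the mechanism is not the one you were searching for. You were trying to route unbounded products or iterated convolutions through the squaring map; instead the paper invokes the Manders--Adleman theorem: deciding whether there exists $x$ with $0\le x\le\gamma$ and $x^{2}\equiv\alpha\pmod{\beta}$ is $\NP$-complete, and the reduction from $\textsc{3SAT}$ is a bijection on solution sets, so \emph{counting} such $x$ is $\SP$-complete. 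Given a hypothetical short GF $g_{\ellR}$ for the squares (with $\ellR = 2\lceil\log\gamma\rceil$), the number of solutions is obtained by two Hadamard products with index-$1$ GFs --- the interval $[0,\gamma^{2}]$ and the arithmetic progression $\alpha\pmod{\beta}$ --- followed by evaluation at $t=1$. All of the $\SP$-hardness is front-loaded into the number theory of the congruence, so only $O(1)$ operations on short GFs are needed, precisely the operations you yourself listed as available. Your stated reason for inaccessibility (that coefficients of fixed powers of $\vartheta$ are quantities attached to quadratic forms and not known to be $\SP$-hard) is answered by choosing the right $\SP$-complete problem rather than the right power of $\vartheta$: one does not compute $r_j(N)$ at a single point, one counts square residues in a box, and Manders--Adleman makes that count $\SP$-complete. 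So the $\SP$-conditional statement is a theorem of the paper, not an open strengthening.
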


The following result is the most surprising result of this paper:

\begin{theo}[= Theorem~\ref{t:squares-sharp}]
If \.\ts $\sharpP \not\subseteq\FPpo$, \ts then Conjecture~\ref{conj:squares-intro} holds.
\end{theo}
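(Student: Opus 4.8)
The plan is to establish the contrapositive in a slightly reorganized form: assuming the truncated theta function $\vt_\ellR$ \emph{does} admit a short GF of polynomial length and bounded index, we derive $\sharpP \subseteq \FPpo$. The bridge between the two worlds is the standard dictionary for short GFs: given short GFs $f,g$ of polynomial length and bounded index, one can compute (in polynomial time, with polynomial advice handling the bounded-index bookkeeping) a short GF for the Hadamard product $f \star g$, and one can evaluate the sum of coefficients of a short GF over an interval $[0,N]$ in time polynomial in the length and in $\log N$. The key point is that $\vt_\ellR \star \vt_\ellR$, restricted appropriately, counts representations: the coefficient of $t^m$ in $\bigl(\sum_{n} t^{n^2}\bigr)^{\tauHad 2}$-type constructions encodes whether $m$ is a sum of two squares, and more flexible weighted Hadamard products let us count the \emph{number} of such representations. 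Iterating the Hadamard product a constant number of times and inserting shift monomials $t^{a}$, we can build, from a short GF for $\vt_\ellR$ of length $\polyin(\ellR)$, a short GF whose coefficients compute $r_s(m)$, the number of ways to write $m$ as an ordered sum of $s$ squares, for $m$ up to $2^\ellR$ — still of length $\polyin(\ellR)$ and bounded index.

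Next I would invoke the arithmetic input: for suitable fixed $s$ (e.g. $s=4$, via Jacobi's formula, or a slightly larger $s$ to be safe), $r_s(m)$ is a $\sharpP$-complete quantity as $m$ ranges over inputs of bit length $\ellR$ — or, more carefully, one reduces a known $\sharpP$-complete counting problem to evaluating $\sum_{m \le N} \chi(m)\, r_s(m)$ for appropriate linear test functions $\chi$ realizable by further Hadamard products with short GFs built from arithmetic progressions (which have trivially short GFs $t^{a}/(1-t^{d})$). The upshot is: a polynomial-length short GF for $\vt_\ellR$ would give, for each input length $\ellR$, a polynomial-size "circuit" (namely the short GF together with the polynomial-time coefficient-extraction algorithm) computing a $\sharpP$-hard function on all inputs of that length. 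Collecting these short GFs over all $\ellR$ as advice strings yields $\sharpP \subseteq \FPpo$, contradicting the hypothesis.

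The main obstacle — and the step that requires genuine care rather than routine manipulation — is controlling the \textbf{index blow-up and length blow-up under Hadamard products}. Barvinok-type arguments show Hadamard products of short GFs of bounded index can be computed in polynomial time, but the index of $f \star g$ grows with the indices of $f$ and $g$; since we only iterate a \emph{constant} number of times ($s$ is fixed), the index stays bounded by a constant and the length stays polynomial, so this is fine in principle, but the bookkeeping must be done explicitly and the dependence on the fixed constants $k$ and $s$ tracked. A secondary subtlety is that the coefficients $r_s(m)$ can be exponentially large in $\ellR$, so "computing" them means outputting their (polynomially many) bits; one must check that the coefficient-extraction procedure for short GFs genuinely runs in time polynomial in the output size, not just in $\ellR$ — this is where the non-uniform ($\FPpo$ rather than $\FP$) formulation is essential and where one leans on the fact that the advice is allowed to depend on $\ellR$. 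Finally, one must make sure the $\sharpP$-hardness is invoked at the right granularity: it is cleanest to fix a single $\sharpP$-complete problem, pad its instances to a common length $\ellR$, and reduce counting solutions to a weighted sum of values $r_s(m)$ over an exponential-size range, all of which the short-GF machinery handles in $\polyin(\ellR)$ time once the short GF for $\vt_\ellR$ is supplied as advice.
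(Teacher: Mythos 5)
Your reduction framework (use the hypothetical polynomial-length short GF for $\vt_\ellR$ as the advice string, combine it with other short GFs via Hadamard products, evaluate at $t=1$ to extract counts in polynomial time) is exactly the right machinery and matches the paper. The genuine gap is in the arithmetic hardness input. The route you foreground --- iterate Hadamard products to get $r_s(m)$ and invoke Jacobi's formula for $s=4$ --- does not deliver $\sharpP$-hardness: Jacobi's formula expresses $r_4(k)$ as $8\sum_{4\nmid d,\, d\mid k} d$, so what you recover is the sum-of-divisors function $\sigma(k)$, and from $\sigma(k)$ one only gets a probabilistic polynomial-time factoring algorithm. Computing $\sigma$ or factoring integers is in $\NP\cap\coNP$ territory and is not known (or believed) to be $\sharpP$-hard, so this branch of your argument proves at most ``$\vt_\ellR$ short $\Rightarrow$ \textsc{INTEGER FACTORING} $\in\BPP$'' --- which is in fact a separate, strictly weaker theorem in the paper --- and cannot yield the stated conclusion.

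Your hedge (``reduce a known $\sharpP$-complete counting problem to a weighted sum of $r_s(m)$ over arithmetic progressions'') points in the right direction but leaves the crux unproved: you never exhibit such a problem. The missing ingredient is the Manders--Adleman theorem that deciding whether there is an $x$ with $0\le x\le\ga$ and $x^2\equiv\al\pmod{\be}$ is $\NP$-complete via a \emph{parsimonious} reduction from \textsc{3SAT}, so that counting such $x$ is $\sharpP$-complete. With that in hand the proof needs no iterated products at all: take the single Hadamard product of the squares GF with $\sum_{i=0}^{\ga^2}t^i$ and with $t^{\al}/(1-t^{\be})$, and evaluate at $t=1$ to count the solutions. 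Without citing (or reproving) a parsimonious $\NP$-hardness result of this arithmetic shape, the link from short GFs for squares to $\sharpP$ is not established, and this link is the actual content of the theorem.
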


In other words, if each truncated theta function can be represented as a short GF of polynomial length and bounded index, then any counting problem can be solved with polynomial size circuits.
See~$\S$\ref{ss:finrem-factoring} for more on the complexity assumption, and
Section~\ref{sec:squares-primes} for the related results on primes.
% and near-square primes.

\subsection{One variable operations}

%% For the rest of the paper we consider only GF of finite or infinite sets, i.e.\
%% power series with coefficients in $\{0,1\}$ defined by their support $\supp(f)\ssu \nn^d$.
Recall that we only consider GFs of finite sets.
We define operations on GFs based on their supports.
For example, taking the union of two GFs $f(t)$ and $g(t)$ means
finding another GF $h(t)$ with $\supp(h) = \supp(f) \cup \supp(g)$.
We can similarly define other Boolean operations.

Short GFs are known to be very versatile and useful in applications.
%flexible objects compared to other forms of GFs (e.g.\ algebraic).
Notably, given a bounded number of short GFs, all Boolean operations
on them can be performed in polynomial time (see~\cite{B3,BP}).
The result is again a short GF with polynomial length.
However, when the number of short GFs is large, no such polynomial time procedures are known.
The following result gives a strong evidence against such possibility:
%% a similar result for intersection/union of possibly many short
%% GFs (instead of a bounded number).

\begin{theo}[=Theorem~\ref{th:union_long}]\label{th:main_2}
  If \.\ts $\sharpP \not\subseteq\FPpo$, \ts  then
taking intersection/union of many short GFs does not preserve polynomiality in length.
%there exist many short GFs of polynomial total length whose intersection cannot be written as short GFs of polynomial length.
\end{theo}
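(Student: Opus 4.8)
The plan is to reduce from the hardness of truncated theta functions (Theorem \ref{t:squares-sharp}, which we may assume). The key point is that the set of squares below $2^\ellR$ can itself be written as an intersection (or a union) of polynomially many ``simple'' sets, each of which has a short GF of polynomial length. If intersections/unions of many short GFs preserved polynomiality in length, then $\vt_\ellR$ would have a short GF of polynomial length and bounded index, contradicting Theorem \ref{t:squares-sharp} under the hypothesis $\sharpP \not\subseteq \FPpo$.

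First I would exhibit the square-counting set as a Boolean combination of many short GFs. Write $\ell = \ellR$ and let $N = 2^{\ellR/2}$. The natural candidate is to express membership of $n$ in $\{0,1,4,9,\dots,N^2\}$ via a polynomial-size system of congruence/inequality conditions: for instance, $n$ is a square in this range iff $0 \le n \le N^2$ and for every prime power $p^a$ in a suitable polynomial-size family, $n$ is a quadratic residue mod $p^a$ (this uses quadratic reciprocity / the local–global obstruction to being a square, with enough moduli to pin down squares in the window $[0,N^2]$). Each condition ``$n \equiv r_1, \dots, r_s \pmod{m}$ and $n \le N^2$'' defines a finite arithmetic-progression-type set whose GF is a short GF of polynomial length — a sum of finitely many geometric-like pieces $\sum t^{r_j}\cdot \frac{1 - t^{mM}}{1 - t^m}$, which has index $\le 1$ and length $\polyin(\ell)$. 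Taking the intersection of these polynomially many short GFs yields exactly $\vt_\ellR$. (Alternatively, for the union statement, one complements each condition and uses De Morgan, or directly writes the non-squares / a partition argument; I would present whichever of intersection or union is cleaner and then note the other follows by an analogous construction or by complementation within the box $[0, 2^\ellR]$, whose GF is trivially short.)

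Then I would close the argument: suppose for contradiction that taking the intersection of $m$ short GFs of length $L$ and bounded index always produces a short GF of length $\polyin(m, L)$ and bounded index. Applying this to the $m = \polyin(\ell)$ short GFs above, each of length $\polyin(\ell)$, we would obtain a short GF for $\vt_\ellR$ of length $\polyin(\ell)$ and bounded index, contradicting Theorem \ref{t:squares-sharp}. The bounded-index condition is preserved because in Barvinok-type Boolean operations on a bounded number of GFs the index grows only by a constant factor per operation; here the subtlety is that we perform polynomially many operations, so I need the specific structure (each factor has index $\le 1$, and the intersection of sets cut out by congruences and one inequality can be assembled with controlled index) rather than a black-box iteration — this is the step I expect to require the most care.

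The main obstacle will be making the ``square $=$ intersection of polynomially many simple short GFs'' step fully rigorous with the right quantitative control: I must choose the family of moduli so that (i) their number is $\polyin(\ell)$, (ii) each modulus is $2^{\polyin(\ell)}$ so that individual short GFs have length $\polyin(\ell)$, and (iii) the common solution set inside $[0, N^2]$ is exactly the set of squares and nothing more — i.e. the moduli must be numerous enough to kill all ``pseudo-squares'' in the window. Controlling that the intersection keeps bounded index (and does not secretly blow up the index through the $\polyin(\ell)$-fold combination) is the other delicate point; I would handle it by giving the intersection GF directly from the inclusion–exclusion / Barvinok identity for the box intersected with the arithmetic progressions, where the denominators are products of at most a constant number of factors $(1 - t^{b})$.
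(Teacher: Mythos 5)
Your overall logical frame is sound: under the hypothesis, Theorem~\ref{t:squares-sharp} gives that $\vt_\ellR$ has no polynomial-length bounded-index short GF, so it would suffice to exhibit $\textsc{SQUARES}_\ellR$ as an intersection (or union) of polynomially many short GFs of polynomial total length and fixed index. But the step you yourself flag as ``the main obstacle'' is not a technical detail --- it is an open problem in number theory. You need a family of moduli $m_1,\dots,m_k$ with $k\le\polyin(\ellR)$ and $\log m_i\le\polyin(\ellR)$ such that the integers in $[0,2^{\ellR}]$ that are quadratic residues modulo every $m_i$ are \emph{exactly} the perfect squares. Non-square integers that are quadratic residues modulo all small moduli are the classical \emph{pseudosquares}, and the assertion that polynomially many (indeed, roughly $\ellR$) primes suffice to eliminate all pseudosquares below $2^{\ellR}$ is a strong quantitative conjecture (of Bach--Huelsbergen type); the provable lower bounds on pseudosquares, even under GRH, are nowhere near strong enough. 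No amount of care in choosing prime powers or invoking reciprocity gets around this: finitely many congruence conditions cut out a periodic set, and certifying that its intersection with a window of length $2^{\ellR}$ contains no non-square is precisely the hard part. So as written, your reduction rests on an unproven arithmetic statement, and the proof does not go through.

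For comparison, the paper avoids squares entirely at this point. It first shows (Theorem~\ref{th:weaker_assumption}) that $\sharpP\not\subseteq\FPpo$ yields some language $\L\in\Ppo\setminus\Gzero$, by a direct counting argument with a Hadamard product against a geometric series. Then Theorem~\ref{th:Ppo_to_GF} writes $\GFof(\L_\ellR;t)$ as a specialization of a box-complement of $\proj_{x,y}(f_\ellR)$, and --- crucially --- decomposes $\proj_{x,y}(f_\ellR)$ as a union of polynomially many short GFs $p_{\ellR,i}$ obtained by applying the Barvinok--Woods projection theorem to the polynomially many disjoint polyhedra coming from the quantifier-free formula (Proposition~\ref{prop:W}). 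A polynomial-length bounded-index short GF for that union would, after the cheap complement and specialization operations of theorems~\ref{th:BPGF1} and~\ref{th:BPGF2}, place $\L$ in $\Gzero$, a contradiction; intersections follow by De Morgan and the compression lemma reduces to one variable. The sets being intersected/united are projections of integer points of polyhedra rather than congruence classes, which is what makes the decomposition provable. If you want to keep your squares-based plan, you would have to replace the quadratic-residue characterization by the circuit-based characterization of $\textsc{SQUARES}\in\poly$ and then you are led back to exactly the paper's construction.
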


This says taking union of many short GFs is hard structurally.
It should be compared to an earlier result by Woods, which says that taking union of many short GFs is hard algorithmically, assuming $\poly \ne \NP$ (see Theorem~\ref{th:GF_NP_hard} and the following remark).

Next, define the \emph{Minkowski sum} $f \oplus g$ of two GFs $f(t)$ and $g(t)$,
to be the GF $h(t)$ with  $\ts \supp(h) = \supp(f) \oplus\supp(g) = \{a+b \mid a\in \supp(f), b \in \supp(g)\}$.

\begin{theo}[=Theorem~\ref{th:Minkowski_long}]\label{th:main_3}
  If \.\ts $\sharpP \not\subseteq\FPpo$, \ts  then
taking Minkowski sum of \emph{two} short GFs does not preserve polynomiality in length.
\end{theo}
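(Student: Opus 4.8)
The plan is to deduce this from Theorem~\ref{t:squares-sharp}. Concretely, I would exhibit, for each $\ellR$, two short GFs $f_\ellR$ and $g_\ellR$ of length $\polyin(\ellR)$ and bounded index with the property that from \emph{any} short GF of length $\polyin(\ellR)$ and bounded index for $f_\ellR\oplus g_\ellR$ one can produce, using only a bounded number of operations that preserve both polynomial length and bounded index, a short GF of length $\polyin(\ellR)$ and bounded index for the truncated theta function $\vt_\ellR$. The operations I would use are exactly those that the Barvinok--Woods machinery recalled above (cf.~\cite{B3,BP}) performs in polynomial time with only a bounded blow‑up of the index when iterated a bounded number of times: Hadamard products (intersections of supports) and unions with explicit short GFs, multiplication by a monomial $t^{c}$, dilations $f(t)\mapsto f(t^{c})$ with $c$ of bit size $\polyin(\ellR)$, and extraction of a range of coefficients (equivalently, Hadamard product with $t^{a}+t^{a+1}+\cdots+t^{b}$). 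Since a chain of $O(1)$ such operations applied to a hypothetical polynomial‑length bounded‑index short GF for $f_\ellR\oplus g_\ellR$ would yield the same for $\vt_\ellR$, this contradicts Theorem~\ref{t:squares-sharp} when $\sharpP\not\subseteq\FPpo$. (One could instead try to route through Theorem~\ref{th:main_2}, observing that $\supp(f\oplus g)=\bigcup_{b\in\supp(g)}\bigl(\supp(f)+b\bigr)$ realizes a Minkowski sum of two short GFs as a particular union of many short GFs; but the hard instance of Theorem~\ref{th:main_2} need not a priori be of this restricted ``translates of one simple set, indexed by a simple set'' shape, so I would return to the common source, Theorem~\ref{t:squares-sharp}.)

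The substance of the proof --- and the step I expect to be the main obstacle --- is the choice of $f_\ellR$ and $g_\ellR$. One must be careful here: $\supp(f_\ellR\oplus g_\ellR)=\supp(f_\ellR)+\supp(g_\ellR)$ is a \emph{sumset}, and its two summands are chosen freely and independently; since the set of squares is not a nontrivial sumset, one cannot simply take $\supp(f_\ellR)+\supp(g_\ellR)=\{n^{2}:0\le n\le 2^{\ellR/2}\}$ itself, nor even a sparse simple sumset out of which the squares are cut by one further intersection with a congruence set. Instead I would work in a base $B=2^{\Theta(\ellR)}$ exceeding all quantities in play, and let $\supp(f_\ellR)$ and $\supp(g_\ellR)$ be explicit unions of $\polyin(\ellR)$ arithmetic progressions, designed so that the carry‑free digitwise additions occurring in the base‑$B$ sumset $\supp(f_\ellR)+\supp(g_\ellR)$ reconstruct the quadratic map $n\mapsto n^{2}$ on a controlled ``diagonal'' subset, with every remaining element of the sumset forced into prescribed, easily described positions. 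The hard part is precisely this: producing the quadratic (indeed, genuinely collision‑creating) data from the linear arithmetic‑progression data of the two summands, through a single independent‑choice sum --- the analogue, on a Minkowski sum, of the mechanism by which $\vt_\ellR$ is handled inside the proof of Theorem~\ref{t:squares-sharp}, but now with the two halves of the data sitting on opposite sides of one $\oplus$.

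Granting such a pair $f_\ellR,g_\ellR$, the remainder is routine but must be carried out carefully: (i) a single Hadamard product of a hypothetical short GF for $f_\ellR\oplus g_\ellR$ with an explicit ``mask'' short GF deletes every spurious element of the sumset, leaving only the diagonal copy of the quadratic data, and a further Hadamard product together with a dilation and a monomial shift projects and rescales it to \emph{exactly} $\vt_\ellR(t)$, with no extra monomials surviving; (ii) the mask and the projection GFs, like $f_\ellR$ and $g_\ellR$ themselves (finite unions of $\polyin(\ellR)$ explicit arithmetic progressions), are short GFs of length $\polyin(\ellR)$ and index at most $1$; and (iii) the index and the length are tracked through the $O(1)$ operations via the Barvinok bounds, confirming that the resulting short GF for $\vt_\ellR$ still has length $\polyin(\ellR)$ and bounded index. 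Assembling (i)--(iii) gives the contradiction with Theorem~\ref{t:squares-sharp}, and hence the theorem.
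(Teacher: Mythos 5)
Your proposal has a genuine gap at its center: the pair $f_\ellR,g_\ellR$ is never constructed, and you yourself identify its construction as ``the main obstacle.'' Everything after ``Granting such a pair'' is conditional on an object whose existence is at least as hard to establish as the theorem itself. Indeed, there is good reason to doubt that the construction you sketch can work: if $\supp(f_\ellR)$ and $\supp(g_\ellR)$ are unions of $\polyin(\ellR)$ arithmetic progressions, then for a ``diagonal'' $\{\alpha(n)+\beta(n)\}$ of the sumset to reproduce $n\mapsto n^2$, the tagged encodings $n\mapsto\alpha(n)$ and $n\mapsto\beta(n)$ must each be piecewise linear with polynomially many pieces (an AP inside a tagged set forces linearity of the encoded function along it), and a sum of two such functions cannot equal a quadratic on a superpolynomially long range. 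The paper's hardness for $\textsc{SQUARES}$ is obtained not from any direct sumset structure but from circuit encodings via Presburger formulas with a $\forall$-quantifier, i.e.\ from complements of projections; there is no mechanism in the paper (or that you supply) for manufacturing quadratic data from one independent-choice addition of two bounded-index short GFs.

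Moreover, the route you explicitly reject is the one that works, and your reason for rejecting it inverts the direction of the reduction. You object that the hard instance of Theorem~\ref{th:main_2} ``need not a priori be of this restricted shape''; but one does not need the hard union to \emph{be} a Minkowski sum --- one needs to \emph{encode} an arbitrary union as a cheap function of a Minkowski sum of two auxiliary GFs built from its pieces. The paper does exactly this: given the hard family $p_{\ellR,1},\dots,p_{\ellR,k_\ellR}$ from Theorem~\ref{th:union_long}, set $a_\ellR(t,u)=\sum_{i=1}^{k_\ellR}p_{\ellR,i}(t)\,u^{i}$ and $b_\ellR(t,u)=\sum_{j=0}^{k_\ellR-1}u^{j}$; then the slice of $\supp(a_\ellR\oplus b_\ellR)$ at $u$-coordinate $k_\ellR$ is precisely $\bigcup_i\supp(p_{\ellR,i})$, extractable by one Hadamard product with $u^{k_\ellR}/(1-t)$ and the substitution $u\gets 1$, after which Lemma~\ref{lem:compress} reduces everything to one variable. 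So a polynomial-length bounded-index short GF for $a_\ellR\oplus b_\ellR$ would yield one for the union, contradicting Theorem~\ref{th:union_long}. To repair your write-up, replace the unconstructed squares-based pair with this tagging construction (or produce the missing pair, which I do not believe is possible as described).
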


Giving precise formulations of these results requires some effort, see
Section~\ref{sec:int-unions}.
Let us mention that in both theorems we can substitute the complexity assumptions with
Conjecture~\ref{conj:squares-intro}.
These results show strong limitations of the ``short GF technology'' from a geometric
point of view (see~$\S$\ref{ss:finrem-bar}). Below we give further evidence of this phenomenon.

\subsection{Projections}
For multivariate short GFs, taking projections is a key operation.
Projection is crucial for applications such as Integer Programming (see e.g.~\cite{E2,K2,NP}),
and theoretical considerations such as Presburger Arithmetic (see e.g.~\cite{B2,NP1,W} and~$\S$\ref{sec:pres}).
In a crucial development, Barvinok and Woods~\cite{BW} showed that given a polytope $P$ in bounded dimension,
the projections of its integer points on some subspace have a short GF of polynomial length,
which can also be computed in polynomial time (Theorem~\ref{th:BW}).
This result exploited the polytopal structure of $P$ and its convexity in a crucial way.
Unfortunately, these are also the reasons that prevent their result to apply on a non-geometric level.
In other words, the algorithm by Barvinok and Woods cannot produce a short~GF
for the projections if the input is presented only as short~GF,
without a polytope associated to it.

An important negative result by Woods in fact shows
that given only a multivariate short GF $f(\t)$, computing its projection is $\coNP$-hard
(see Theorem~\ref{th:GF_NP_hard} and the Remark~\ref{rem:Woods-NP}).
The following theorem is the central result of the paper.  Roughly speaking, it both
weakens the assumptions and strengthens the conclusions of Woods's theorem.

\begin{theo}[=Corollary~\ref{cor:proj_long_strong}]\label{th:main_1}
If \.\ts $\sharpP \not\subseteq\FPpo$, \ts  then
taking projection of a short GF does not preserve polynomiality in length.
\end{theo}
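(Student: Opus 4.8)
The plan is to deduce Theorem~\ref{th:main_1} formally from the Minkowski-sum result, Theorem~\ref{th:main_3} (the union result, Theorem~\ref{th:main_2}, would serve equally well). The key observation is that the Minkowski sum of two short GFs is itself a coordinate projection of a short GF obtained from the two by a \emph{single} ordinary product, and that this product stays short: multiplying out two expressions of the form $(\ast)$ again produces an expression of the form $(\ast)$, with the number of summands multiplied, the index added, and all bit lengths essentially added.

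Concretely, I would start from arbitrary finite sets $A, B\ssu\nn$ with short GFs $f(x)=\sum_{a\in A}x^{a}$ and $g(y)=\sum_{b\in B}y^{b}$ of index $\le k$, and set
\[
h(x,y,z)\ :=\ f(xz)\,g(yz)\ =\ \sum_{a\in A,\ b\in B}x^{a}\,y^{b}\,z^{a+b}.
\]
Substituting the monomials $xz$ and $yz$ for $x$ and $y$ does not enlarge the index (no denominator degenerates, since all the $b_{ij}$ in $(\ast)$ are nonzero), and multiplying out writes $h$ directly in the form $(\ast)$ with at most $M_fM_g$ summands and index $\le 2k$; hence $h$ is a short GF of bounded index and of length polynomial in $\phi(f)+\phi(g)$. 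Because $f$ and $g$ have $0/1$ coefficients and distinct $(x,y)$-exponents appear in distinct summands, $h$ is in fact the GF of the set $\{(a,b,a+b):a\in A,\ b\in B\}$, so $\supp(h)=\{(a,b,a+b):a\in A,\ b\in B\}$. Therefore the projection of $h$ eliminating the variables $x$ and $y$ has support exactly $A\oplus B=\supp(f)\oplus\supp(g)$, i.e.\ a short GF representing this projection of $h$ is, by definition, a short GF for the Minkowski sum $f\oplus g$.

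Now I would argue by contraposition. Suppose taking projections of short GFs \emph{did} preserve polynomiality in length: say there were a polynomial $p$ and a constant $k'$ such that every short GF of length $n$ and bounded index admits a projection expressible by a short GF of length $\le p(n)$ and index $\le k'$. Applying this to the GFs $h$ above shows that whenever $A$ and $B$ have short GFs of polynomial length and bounded index, so does $A\oplus B$ — which is precisely the negation of Theorem~\ref{th:main_3}. Since Theorem~\ref{th:main_3} holds whenever $\sharpP\not\subseteq\FPpo$ (equivalently, whenever Conjecture~\ref{conj:squares-intro} holds), the conclusion of Theorem~\ref{th:main_1} follows under the same hypothesis. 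Replacing $h$ by $\tilde h(y,w):=f(y)+w\,g(y)$, whose support is $(\{0\}\times A)\cup(\{1\}\times B)$, reduces instead to the union result, Theorem~\ref{th:main_2}.

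All the genuine difficulty is thus inherited rather than created here: producing two short GFs whose Minkowski sum has no polynomial-length short GF unless $\sharpP\subseteq\FPpo$ — which in the end rests on the hardness of the truncated theta function, Theorem~\ref{t:squares-sharp} — lives entirely inside Theorem~\ref{th:main_3}. The only point requiring care in the present reduction is uniformity: one must pin down once and for all what ``preserves polynomiality in length'' means (including the bound on the output index), and then check that the index of $h$, hence of every projection of $h$, is bounded by a constant depending only on the index bound assumed for $f$ and $g$, so that the implication ``projection preserves $\Rightarrow$ Minkowski sum preserves'' is literally valid.
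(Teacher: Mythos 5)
Your reduction is correct, but it runs in the opposite direction to the paper's own argument. The paper proves Corollary~\ref{cor:proj_long_strong} directly from Theorem~\ref{th:weaker_assumption}: under $\sharpP\not\subseteq\FPpo$ there is a language $\L\in\Ppo\smallsetminus\Gzero$, and Theorem~\ref{th:Ppo_to_GF} (plus Lemma~\ref{lem:compress}) writes $\GFof(\L_{\ellR};t)$ as a specialization and finite complement of a projection of a polynomial-length short GF; since specialization and complement are cheap by theorems~\ref{th:BPGF1} and~\ref{th:BPGF2}, the projection itself cannot be short. The union and Minkowski-sum theorems are then derived downstream from the same source. You instead take Theorem~\ref{th:main_3} as a black box and observe that a Minkowski sum is itself a single projection: $h(x,y,z)=f(xz)\ts g(yz)$ has support $\{(a,b,a+b)\}$, the product of two expressions of the form $(\divideontimes)$ is again of that form with summand counts multiplied and indices added, so $h$ is short, and projecting onto $z$ yields $f\oplus g$. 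This is non-circular (the paper's proof of Theorem~\ref{th:Minkowski_long} does not invoke Corollary~\ref{cor:proj_long_strong}) and is a nice self-contained identity; what it buys is brevity given Theorem~\ref{th:main_3}, and what it loses is the sharper packaging of the paper's statement --- Corollary~\ref{cor:proj_long_strong} exhibits the hard sequence in a two-variable class $\GF_{2,s}$ and phrases the result as the non-collapse $\GH\neq\Gzero$, which your three-variable $h$ only delivers after an extra appeal to Lemma~\ref{lem:compress} and to Section~\ref{sec:GF}. Two small cautions: your parenthetical union variant $\tilde h=f+wg$ with only two summands proves nothing, since the union of \emph{two} short GFs is already short by Theorem~\ref{th:BPGF2}; you would need $\sum_i w^i p_{\ellR,i}$ over the polynomially many $p_{\ellR,i}$ of Theorem~\ref{th:union_long}, which is exactly the device the paper itself uses inside the proof of Theorem~\ref{th:Minkowski_long}. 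And your route ultimately rests on the same engine anyway, since Theorem~\ref{th:main_3} is itself proved from Theorem~\ref{th:weaker_assumption}.
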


This says that in general not only we cannot \emph{compute} the projection of a short~GF
in polynomial time, %%some projections of short~GFs have projections so that
%%every short~GF presentation
any short GF that represents the projection
must have a super-polynomial length.
In other words, the barriers of using the ``short GF technology'' in this case
are structural rather than algorithmic.
%% Again this should be compared to the earlier result by Woods, which says that projecting short GF is hard algorithmically, assuming $\poly \ne \NP$ (see Theorem~\ref{th:GF_NP_hard} and the following remark).

The next result can be viewed as a refinement of the previous theorem, giving
a precise characterization of complexity of projections.

\begin{theo}[=Theorem~\ref{th:GH_PHpo}]\label{th:main_4}
Repeated projections of short GFs can encode every language in the non-uniform
polynomial hierarchy $\PH\po$. In fact, they form a hierarchy that
coincides with~$\PH\po$.
\end{theo}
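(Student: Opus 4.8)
The proof proceeds in two directions, corresponding to the two sentences of the statement. For the inclusion ``repeated projections of short GFs $\subseteq \PH\po$'', I would set up a class $\GH$ whose $k$-th level $\Si^{\Gzero}_k$ consists of sets obtained by applying $k$ alternating projections (with complementation at each stage, taken inside a fixed ambient box) to short GFs of polynomial length and bounded index. The key observation is that a short GF of polynomial length can be \emph{evaluated} in $\FPpo$: given $f(\t)$ and a lattice point $\v$, one decides $\v\in\supp(f)$ by substituting and reading off the coefficient, which is a $\SP$-type computation that collapses to a polynomial-size circuit. Hence membership in $\supp(f)$ is a $\poly/\text{poly}$ predicate, and each projection step prepends one bounded $\exists$ (the complement prepends a $\forall$); after $k$ stages one obtains a $\Si^\poly_k$ predicate with polynomial advice, i.e.\ a language in $\PH\po$. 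I would phrase this as a level-by-level simulation, so that $\Si^{\Gzero}_k \subseteq \Si^\poly_k/\text{poly}$.

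For the reverse inclusion $\PH\po \subseteq \GH$, the engine is Theorem~\ref{th:main_1} / Corollary~\ref{cor:proj_long_strong} together with the truncated theta / squares machinery of Theorem~\ref{t:squares-sharp}. The plan is to show that a single projection of a short GF is already powerful enough to encode an arbitrary $\NP\po$ predicate: one builds a polynomial-length short GF whose support, after projecting out the witness coordinates, is exactly $\{n : \exists w,\ R(n,w)\}$ for the chosen $\NP$ relation $R$, using the same construction that makes the projection lengths blow up in Theorem~\ref{th:main_1} (the hard instance there is essentially an $\NP$-complete set in disguise). Once one projection realizes $\NP\po$ (hence $\Si^\poly_1/\text{poly}$), alternating projection-and-complement realizes the whole $\PH\po$ by the standard inductive definition of the polynomial hierarchy: $\Si^\poly_{k+1}/\text{poly} = \exists^{\poly}\,\Pi^\poly_k/\text{poly}$, and each $\exists^\poly$ is matched by one more projection, each complement by one more Boolean negation inside the box. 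The advice strings are absorbed into the integer parameters $a_i, b_{ij}, c_i$ of the short GF at each level.

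Combining the two containments level by level gives $\Si^{\Gzero}_k = \Si^\poly_k/\text{poly}$ for all $k$, hence the projection hierarchy $\GH = \bigcup_k \Si^{\Gzero}_k$ equals $\PH\po = \bigcup_k \Si^\poly_k/\text{poly}$, which is the theorem; the fact that $\GH$ \emph{is} a genuine hierarchy (does not collapse beyond where $\PH\po$ does) is then immediate. The main obstacle I anticipate is the reverse direction: one must verify that the $\NP\po$-hardness construction underlying Theorem~\ref{th:main_1} is \emph{composable} — that after one projection the output is again a short GF of polynomial length and bounded index (so the next quantifier can be applied), rather than merely a set with no short GF. This is plausible because the intermediate projections in the hierarchy are over sets with structure (they are Presburger-definable with a bounded number of alternations, and such sets do have short GFs of polynomial length by the Barvinok--Woods-type results cited in $\S$\ref{sec:pres}), but making the bookkeeping of lengths and indices uniform across all $k$ levels — so that the index stays bounded by a single constant independent of $k$ while only the length grows polynomially in the advice — is the delicate point that will require care with the constructions from Sections~\ref{sec:int-unions} and~\ref{sec:squares-primes}.
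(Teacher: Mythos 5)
Your high-level architecture (two containments, one quantifier per projection, the GF itself as the advice string) matches the paper, but three of your key steps are wrong or inverted. First, your simulation of projections inside $\PH\po$ rests on the claim that deciding $\v\in\supp(f)$ is ``a $\SP$-type computation that collapses to a polynomial-size circuit.'' No such collapse is available: $\SP\subseteq\FPpo$ is precisely the hypothesis the whole paper treats as false, and if coefficient extraction genuinely cost a $\SP$ oracle the levels would not line up. What actually makes the containment work is that for \emph{bounded} dimension and index, coefficient extraction and single-projection membership are in deterministic polynomial time (Propositions~\ref{prop:supp_is_P} and~\ref{prop:supp_proj_is_P}); this is exactly where the restriction to a fixed class $\GF_{n,s}$ is used. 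This also forces an off-by-one that you miss: since the innermost projection is absorbed into the polynomial-time base predicate, the correct correspondence is $\SigmaG_{k+1}=\SigmaP_k\po$, not $\SigmaG_k=\SigmaP_k\po$. In particular $\SigmaG_1\subseteq\Ppo$, so your claim that ``a single projection of a short GF is already powerful enough to encode an arbitrary $\NP\po$ predicate'' is false; encoding $\NP\po$ requires two alternating projections.

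Second, your encoding direction inverts the paper's logical dependencies: Theorem~\ref{th:main_1} and Theorem~\ref{t:squares-sharp} are conditional \emph{consequences} of the encoding machinery, not tools for proving it. The actual engine is unconditional: Lemma~\ref{lem:PH_to_Pr} converts the $\SigmaP_k\po$ circuit predicate into a Presburger formula with $k+1$ alternating quantifier blocks over a fixed number of integer variables (via the 3-CNF reduction and a bit-extraction gadget costing one extra quantified block $\z\in[0,2^{q})^3$); Theorem~\ref{th:W} then gives a polynomial-length short GF for the quantifier-free matrix, Lemma~\ref{lem:compress} shrinks the number of variables back to a constant, and the quantifiers become alternating $\proj$ and $\cproj$. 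Your proposal never mentions this circuit-to-Presburger-to-GF pipeline, which is the heart of the proof. Finally, the ``composability'' obstacle you anticipate dissolves once the hierarchy is defined correctly: in Definition~\ref{def:SigmaG} the intermediate projections and anti-projections are set-theoretic operations on supports inside a fixed box, and are never required to be polynomial-length short GFs --- only the innermost $f_{\ellR}$ is. Indeed, demanding that the intermediate stages be short GFs of polynomial length would (conditionally) collapse the hierarchy by Theorem~\ref{th:weaker_assumption}, so the definition cannot be the one you have in mind.
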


We postpone the precise formulations of these results, especially of
Theorem~\ref{th:main_4} where the technicalities are unavoidable.
Let us also mention Proposition~\ref{prop:partial_converse} which
can be viewed as a partial converse of Theorem~\ref{th:main_1}
(cf.~$\S$\ref{t:squares-sharp}).\footnote{By itself,
Conjecture~\ref{conj:squares-intro} does not necessarily imply that
\ts\ts $\sharpP \not\subseteq\FPpo$, so a stronger assumption is used
in Proposition~\ref{prop:partial_converse}.}

\subsection{Paper structure}
The results in this paper are largely self-contained and require little more
than a few technical lemmas from~\cite{BP}, which are all stated in Section~\ref{sec:operations}
and can be treated as black boxes. We do however employ a fair amount of definitions and
notations (sections~\ref{sec:notations} and~\ref{sec:operations}).  We also assume
the reader is familiar with basic Computational Complexity, which goes to the heart
of this paper.  We refer the reader to~\cite{MM,Pap} for the standard results and notation,
and to~\cite{Aar} for a comprehensive recent survey.

Our Section~\ref{sec:short-GF-complexity} is the key as it describes the connection
between languages and short GFs.  From this point on, the reader can proceed
to the development of the short GF hierarchy, culminating in the proofs of theorems~\ref{th:main_1}
and~\ref{th:main_4} (sections~\ref{sec:nu-hier}--\ref{sec:long-proj}).
Alternatively, modulo a few definitions in earlier section, the reader proceed directly
to the proof of theorems~\ref{th:main_2} and~\ref{th:main_3} in Section~\ref{sec:int-unions}.
Similarly, the reader can also proceed to study complexity of squares
and primes (Section~\ref{sec:squares-primes}). In Section~\ref{sec:rel} we investigate
more technical questions on relative complexity of short~GFs, and in Section~\ref{s:lemma-proof}
we give a proof of a technical Lemma~\ref{lem:compress}.  We conclude with final remarks
and open problems in Section~\ref{sec:fin-rem}.

\bigskip

\section{Notations} \label{sec:notations}

\nin
We use $\nn \ts = \ts \{0,1,2,\ldots\}$.

\nin
All constant vectors are denoted as $\a, \b, \c, \d, \n,$ etc. The all $1$ vector is also denoted by $1$.

%% \nin
%% The $L_{\infty}$ norm of $\a$ is denoted by $|\a|$.

\nin
Matrices are denoted as $A, B, C$, etc.

\nin
Single variables are denoted as $x,y,z$, etc.; vectors of variables are denoted as $\x, \y, \z$, etc.

\nin
We write $\x \le \y$ if $x_{j} \le y_{j}$ for all $i$.

\nin
For two tuples $\x$ and $\t$ both of length $n$, we denote by $\t^{\x}$ the monomial $t_{1}^{x_{1}} \dots t_{n}^{x_{n}}$.

%%\nin
%%We also write $\x \le N$ to mean that each coordinate is~$\le N$.

%% \nin
%% Denote by $[-N,N]^{n}$ the set of all vectors $\x \in \Z^{n}$ with $|\x| \le N$.

%%If $x_{j} \le c$ for every index $j$ with $c$ a constant, we write $\x \le c$.

%%We use $\floor{.}$ to denote the floor function.

%%The the vector $\y$ with coordinates $y_{i} = \floor{x_{i}}$ is denoted by $\y = \floor{\x}$.

%%Single-variable generating functions are denoted by $f(t), g(u), h(v)$, etc.

\nin
GF is an abbreviation for ``\emph{generating function}.''

\nin
Single-variable GFs are denoted as $f(t), g(t), h(t), p(t), q(t)$, etc.

\nin
Multi-variable GFs are denoted as $f(\t), g(\t), h(\t), p(\t), q(\t)$, etc.

\nin
The support of a GF $f(\t)$ is denoted by $\supp(f)$.

\nin
The symbols $\lnot, \land$ and $\lor$ denote negation (complement), conjunction and disjunction.

\nin
A \emph{polyhedron} is an intersection of finitely many closed half-spaces in~$\rr^n$.

%% \emph{Copolyhedron/copolytope} is a polyhedron/polytope with possibly some open

%% facets.

\nin
A \emph{polytope} is a bounded polyhedron.

\nin
Polyhedra and polytopes are denoted as $P, Q, R$, etc.

\nin
The function $\phi(\cdot)$ denotes the \emph{bit length} of a number, vector, matrix, GF, or a logical formula when written in binary.

\nin
For a polyhedron $Q$ described by a linear system $A\x \le \b$, we denote by $\phi(Q)$ the total length
$\phi(A)+\phi(\b)$.

\bigskip

\section{Polynomial time operations on short GFs}  \label{sec:operations}

\subsection{Preliminaries on short GFs}\label{sec:prelim}
% For the definition of short GFs, please refer back to \eqref{???}.
%In this section, we consider short GFs of bounded indices, not necessarily coming from short Presburger formulas, and ask under what operations they remain so.
%% Again, we emphasize that all GFs considered have $0/1$ coefficients, unless otherwise mentioned.
%% Short GFs of the form $(\divideontimes)$ are always written binary for all results in this paper.

A power series $f(\t) = \sum \al_{\x} \t^{\x}$ is called a GF if each coefficient $\al_{\x}$ is either $0$ or $1$.
When needed, we will write $f(\t) = \sum \t^{\x}$ to emphasize that $f$ is a GF.

\begin{defi}\label{def:supp}
The support of an $n$-variable GF $g(\t) = \sum \t^{\x}$ is defined as:
\[
\supp(g) \coloneqq \{ \x \in \Z^{n} : [\t^{\x}] g(\t) = 1\}.
\]
Here $[\t^{\x}]$ denotes the coefficient of the monomial $\t^{\x}$ in $g(\t)$.
\end{defi}

\begin{defi}\label{def:proj_and_spec}
  Given a multi-variable GF $f(\t,\u) = \sum \t^{\x} \u^{\y}$ with $\x \in \Z^{m}, \y \in \Z^{n}$, the \emph{$\x$-projection} $g=\proj_{\x}(f)$ is the unique GF $g(\t) = \sum \t^{\x}$ with support satisfying
\[
\supp(g) = \{\x \in \Z^{m} : \ex \. \y\in\Z^{n} \;\; (\x, \y) \in \supp(f)\}.
\]
%%We write $\proj(f)$ without the subscript $\x$ whenever the context is clear.
If $f$ satisfies the extra property that for every $\x \in \Z^{m}$ there is at most one $\y \in \Z^{n}$ such that $(\x,\y) \in \supp(f)$, then $\proj_{\x} (f)$ is called the \emph{$\x$-specialization} of $f$, denoted by $\spec_{\x}(f)$.
%% or just $h = \spec(f)$ when the variable $\x$ is clear.
\end{defi}

\begin{defi}
  Consider two power series $f(\t) = \sum \al_{\x} \t^{\x}$ and $g(\t) = \sum \be_{\x} \t^{\x}$.
The \emph{Hadamard product} of $f$ and $g$, denoted by $f \star g$, is another GF $h(\t) = \sum \gamma_{\x} \t^{\x}$ with
  \[
  \gamma_{\x} = \al_{\x} \. \be_{\x} \; \text{ for every } \x.
\]
If $f$ and $g$ are GFs then the above condition is equivalent to $\supp(h) = \supp(f) \cap \supp(g)$.
\end{defi}

\begin{defi}  \label{def:short GF multi}
For a rational function in $n$ variables $\t = (t_{1},\dots,t_{n})$ of the form
\begin{equation*}
(\divideontimes) \qquad f(\t)  \, = \,
\sum_{i=1}^M \, \frac{c_{i} \. \bt^{\a_i}}{(1-\bt^{\b_{i\ts 1}})\cdots (1-\bt^{\b_{i\ts k_{i}}})}\ts,
\end{equation*}
the length $\phi(f)$ of $f$ is defined as
\begin{equation*}%%\label{eq:GF_length}
\phi(f) \, = \,
\sum_{i} \. \lceil\log_2 |p_{i}\. q_{i}|+1\rceil \, + \,
\sum_{i, j} \. \lceil\log_2 a_{i\ts j}+1\rceil \, + \,
\sum_{i,j,m} \. \lceil\log_2 b_{i \ts j \ts m}+1\rceil\ts,
\end{equation*}
%%where $c_{i} = p_{i}/q_{i} \in \Q$, $\a_i=(a_{i\ts 1},\ldots,a_{i\ts n}) \in \Z^{n}$ and $\b_{i\ts j}=(b_{i\ts j\ts 1},\ldots,b_{i\ts j\ts n}) \in \Z^{n}$.
where $c_{i}=p_{i}/q_{i} \in \Q ,\; \a_{i},\b_{ij} \in \Z^{n} ,\; \b_{ij} \neq 0$ and $\ts\bt^{\a} = t_1^{a_{1}}\cdots {}\ts t_n^{a_{n}}\ts$ if $\ts\a = (a_{1},\ldots,a_{n}) \in \zz^{n}$.
\end{defi}

\begin{defi}
For a power series $f(\t)=\sum \al_{\x} \t^{\x}$ given in the form $(\divideontimes)$, the \emph{index} of $f$ is defined as
\begin{equation*}
\text{index}(f) = \max\{k_{i} \;:\; i = 1,\dots,M\},
\end{equation*}
where $k_{i}$ is the number of factors in the denominator of the $i$-th summand.
\end{defi}

\begin{defi}\label{def:GF_def}
For every number of variables~$n$ and integer~$s$, we define two classes:
\begin{equation}\label{eq:GF_def}
\GF_{n,s} = \bigl\{ \text{GFs } g(\t) \text{ given in the form }(\divideontimes)\text{  with  } \ind{g} \le s   \bigr\}
\end{equation}
and
\begin{equation}
\GFstar_{n,s} = \bigl\{ \text{power series } g(\t) \text{ given in the form }(\divideontimes)\text{  with } \ind{g} \le s  \bigr\}.
\end{equation}
Members of $\GF_{n,s}$ are called \emph{short GFs}, while those of $\GFstar_{n,s}$ are called \emph{short power series}.
\end{defi}

We recall the following important results from~\cite{BP} (see also \cite{BW}):

\begin{theo}[\cite{BP}]\label{th:BPGF1}
Fix a class $\GF_{m,s}$. Given a short GF $f(\t) \in \GF_{m,s}$ of finite support.
We can compute in time $\polyin(\phi(f))$ the following:
\begin{enumerate}[label=\textup{\arabic*)}]
\item The norm \ts $N = \max \{|\x| : \x \in \supp(f) \}$,\footnote{Here $|\x|$ can be any polyhedral norm on $\x$, including $|\x|_{\infty}$ and $|\x|_{1}$.}
\item The cardinality $M = |\supp(f)|$, which is equal to $f(1)$,
\item The substitution $q(\u) = f(\t(\u))$, where $\t$ is substituted by monomials in some other variables $\u = (u_{1},\dots,u_{n})$.
Furthermore, we have $q(\u) \in \GFstar_{n,s}$.
\end{enumerate}
\end{theo}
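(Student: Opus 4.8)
This statement collects three standard facts about short GFs from~\cite{BP,BW}; I sketch how I would prove each in turn.

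For part~2) --- computing $f(1)=|\supp(f)|$ --- the plan is to reduce to one variable by a generic monomial substitution and then recover $f(1)$ as a residue at $\tau=1$. First I would pick $\lambda=(\lambda_1,\dots,\lambda_m)\in\Z^m$ with $\langle\b_{ij},\lambda\rangle\ne 0$ for all $i,j$ --- for instance $\lambda_r=L^{\,r-1}$ with $L$ one more than the largest absolute coordinate occurring among the $\b_{ij}$, which has $\polyin(\phi(f))$ bits --- so that the substitution $t_r\mapsto\tau^{\lambda_r}$ is admissible and produces the single-variable short power series
\[
 g(\tau)\,=\,\sum_{i=1}^M\frac{c_i\,\tau^{\langle\a_i,\lambda\rangle}}{\prod_j\bigl(1-\tau^{\langle\b_{ij},\lambda\rangle}\bigr)}\,=\,\sum_{\x\in\supp(f)}\tau^{\langle\x,\lambda\rangle},
\]
which still has finite support, satisfies $g(1)=f(1)$, is regular at $\tau=1$, and is a sum of $M$ terms each with a pole there of order at most $s$. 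I would then put $\tau=e^u$, expand $\tfrac1{1-e^{nu}}$ through Bernoulli numbers, compute the first $s+1$ Laurent coefficients in $u$ of each of the $M$ terms (only $O(1)$ per term are needed, and all are rationals of $\polyin(\phi(f))$ bit length), sum these expansions, and read off the constant term; the polar parts must cancel since $g$ is regular at $\tau=1$, and the constant term equals $f(1)$. Boundedness of the index is exactly what makes this polynomial time.

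For part~1) the plan is an a priori box bound plus binary search, reduced to part~2). Clearing denominators writes $f=P/Q$ with $Q=\prod_{\text{all }(i,j)}(1-\bt^{\b_{ij}})$ and $P$ a Laurent polynomial all of whose monomials have exponents of $\infty$-norm at most $B_0=\max_i|\a_i|_\infty+\sum_{i,j}|\b_{ij}|_\infty=2^{\polyin(\phi(f))}$; since every factor of $Q$ has $\0$ in its support one gets $\mathrm{New}(f)\subseteq\mathrm{New}(P)$, hence $\supp(f)\subseteq[-B_0,B_0]^m$ and $N$ has $\polyin(\phi(f))$ bits. Then I would binary search on each coordinate: to decide whether some $\x\in\supp(f)$ has $x_r\ge c$, take the Hadamard product of $f$ with the explicit short GF (of constant index~$m$, a product of geometric series) of the box $\{-B_0\le x_t\le B_0 \text{ for all } t,\ x_r\ge c\}$, which by the intersection lemma of~\cite{BW,BP} again has bounded index and $\polyin(\phi(f))$ length, and test whether the resulting GF $h$ is the zero series by checking $h(1)=0$ via part~2). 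Combining the coordinatewise maxima and minima gives $N$ for any polyhedral norm.

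For part~3) the computation is essentially immediate. Writing the substitution as $t_r\mapsto\u^{\m_r}$ with $\m_r\in\Z^n$ and $A=[\,\m_1\,|\cdots|\,\m_m\,]$, we have $\bt^{\c}=\u^{A\c}$ for every $\c\in\Z^m$, so
\[
 q(\u)\,=\,f(\t(\u))\,=\,\sum_{i=1}^M\frac{c_i\,\u^{A\a_i}}{\prod_j\bigl(1-\u^{A\b_{ij}}\bigr)},
\]
which is already in the form $(\divideontimes)$ with the same index $\le s$ and with all constants of $\polyin(\phi(f))$ bit length (matrix--vector products), provided the substitution is admissible in the sense that $A\b_{ij}\ne\0$ for all $i,j$. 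Since the substitution need not be injective on $\supp(f)$, the coefficients of $q$ may exceed $1$, which is why one only claims $q\in\GFstar_{n,s}$ rather than $q\in\GF_{n,s}$.

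The main obstacle is part~2): verifying that the polar parts of the $M$ summands of $g$ cancel and that the Laurent coefficients all have polynomial bit length --- this is precisely the step where boundedness of the index~$s$ is used, since it caps the orders of the poles. The a priori support bound $\supp(f)\subseteq[-B_0,B_0]^m$ in part~1) is the secondary point needing a short argument.
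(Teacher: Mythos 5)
This theorem is imported from \cite{BP} (see also \cite{BW}) and the paper gives no proof of it --- it is explicitly one of the results ``treated as black boxes'' --- so there is nothing internal to compare your argument against; what follows assesses your reconstruction of the standard proofs. Your part~2) is essentially Barvinok's evaluation algorithm and is correct: the choice $\lambda_r=L^{r-1}$ does avoid all hyperplanes $\langle\b_{ij},\lambda\rangle=0$ (a nonzero integer vector with coordinates of absolute value $<L$ cannot pair to zero with $(1,L,\dots,L^{m-1})$), and the Bernoulli-number expansion of the first $s+1$ Laurent coefficients at $\tau=e^u$ is exactly where bounded index enters. Your part~1) a priori bound via Newton polytopes ($\0\in\mathrm{New}(Q)$, hence $\mathrm{New}(f)\subseteq\mathrm{New}(f)+\mathrm{New}(Q)=\mathrm{New}(P)$) is valid because the finite-support hypothesis makes $f$ a Laurent polynomial, and the binary search via Hadamard products and zero-testing is sound.

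Two caveats. First, in part~1) your closing claim that ``combining the coordinatewise maxima and minima gives $N$ for any polyhedral norm'' is false as stated: for $|\cdot|_1$ the maximum of $\sum_r|x_r|$ over $\supp(f)$ is not determined by the coordinatewise extrema. The fix is easy and stays within your method --- for each facet functional $\ell_k$ of the unit ball of the given polyhedral norm ($2^m$ sign patterns for $|\cdot|_1$, a constant since $m$ is fixed), binary search on $\max_{\x\in\supp(f)}\langle\ell_k,\x\rangle$ by intersecting with the box cut by the half-space $\langle\ell_k,\x\rangle\ge c$, whose GF Barvinok's theorem supplies. Second, and more substantively, your part~3) covers only \emph{admissible} substitutions with $A\b_{ij}\ne\0$ for all $i,j$, whereas the paper repeatedly applies this part to degenerate specializations such as $\u\gets 1$ or $f(\t,1)$ (e.g.\ in Proposition~\ref{prop:supp_proj_is_P} and in the proofs of Theorems~\ref{th:weaker_assumption} and~\ref{t:squares-sharp}), where some denominator factors vanish identically. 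The full statement in \cite{BP,BW} handles this by a one-parameter perturbation of the monomial map followed by the same residue/Laurent computation you use in part~2); without that step your part~3) does not deliver the version of the theorem the paper actually uses.
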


\begin{theo}[\cite{BP}]\label{th:BPGF2}
Fix two classes $\;\GF_{m,s_{1}}$ and $\;\GF_{m,s_{2}}$.
Given $f(\t) \in  \GF_{m,s_{1}} $ and $\;g(\t) \in \GF_{m,s_{2}}$ of finite supports, we can compute in time $\polyin(\phi(f)+\phi(g))$ the following:
\begin{enumerate}[label=\textup{\arabic*)}]
\item A short GF $h(\t)$ with $\supp(h) = \supp(f) \cap \supp(g)$, i.e., $h(\t) = f(\t) \star g(\t)$,
\item A short GF $k(\t)$ with $\supp(k) = \supp(f)  \cup \supp(g)$.
 \item A short GF $p(\t)$ with $\supp(p) = \supp(f)  \cpl \supp(g)$.
 \end{enumerate}
 Moreover, we have $h,k,p \in \GF_{m,s_{1} + s_{2}}$.
 \end{theo}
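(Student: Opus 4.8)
\emph{Plan.} The plan is to treat the Hadamard product, part~1), as the core of the statement, and to derive the union, part~2), and set difference, part~3), from it. Suppose we have already produced $h=f\star g\in\GF_{m,s_1+s_2}$ with $\supp(h)=\supp(f)\cap\supp(g)$. I would then take $k=f+g-h$ and $p=f-h$: for each $\x$, since $[\t^{\x}]f,[\t^{\x}]g\in\{0,1\}$, we have $[\t^{\x}](f+g-h)=[\t^{\x}]f+[\t^{\x}]g-[\t^{\x}]f\cdot[\t^{\x}]g$, which equals $1$ exactly when $\x\in\supp(f)\cup\supp(g)$, and $[\t^{\x}](f-h)=[\t^{\x}]f-[\t^{\x}]f\cdot[\t^{\x}]g$ equals $1$ exactly when $\x\in\supp(f)\cpl\supp(g)$. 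Forming $f+g-h$ or $f-h$ from expressions already written in the form $(\divideontimes)$ only requires concatenating the lists of summands and negating the $c_i$'s coming from $h$, which costs $\polyin(\phi(f)+\phi(g))$ and produces index $\max\{s_1,s_2,s_1+s_2\}=s_1+s_2$. So all three parts reduce to part~1).

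\emph{Reduction to single summands.} Using the identity $1/(1-\t^{\b})=-\t^{-\b}/(1-\t^{-\b})$, one rewrites $f$ in polynomial time as $\sum_i\epsilon_i\,\t^{\a_i}/\prod_{r=1}^{p_i}(1-\t^{\b_{i,r}})$ in which, for each $i$, the vectors $\b_{i,1},\dots,\b_{i,p_i}$ lie in a common open halfspace; then the $i$-th summand is the convergent series $\epsilon_i\sum_{\m\in\N^{p_i}}\t^{\a_i+B_i\m}$, where $B_i$ has columns $\b_{i,1},\dots,\b_{i,p_i}$, with all coefficients finite. Do the same for $g$ (exponent counts $q_j\le s_2$), and note $p_i\le s_1$. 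As $\star$ is bilinear, it now suffices to compute $u\star v$ for single summands $u=\epsilon\,\t^{\a}/\prod_{i=1}^{p}(1-\t^{\b_i})$ and $v=\delta\,\t^{\c}/\prod_{j=1}^{q}(1-\t^{\d_j})$, with $p\le s_1$, $q\le s_2$ and both exponent families in open halfspaces.

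\emph{Hadamard product of two summands via Barvinok's theorem.} Write $B=[\b_1|\cdots|\b_p]$ and $D=[\d_1|\cdots|\d_q]$. From the two series expansions, $[\t^{\x}](u\star v)=\epsilon\delta\cdot\#\{\m\in\N^{p}:B\m=\x-\a\}\cdot\#\{\n\in\N^{q}:D\n=\x-\c\}$. Consider the polyhedron $Q=\{(\m,\n)\in\rr^{p}\times\rr^{q}:\m\ge 0,\ \n\ge 0,\ B\m-D\n=\c-\a\}$; it lies in the nonnegative orthant (hence has no line), has dimension $\le p+q\le s_1+s_2$, and description length $\polyin(\phi(u)+\phi(v))$. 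By Barvinok's theorem (Theorem~\ref{th:Barvinok}) we compute in polynomial time a short GF $\Phi(\y,\z)=\sum_{(\m,\n)\in Q\cap\Z^{p+q}}\y^{\m}\z^{\n}$ with $\ind{\Phi}\le p+q$. Now apply the monomial substitution $y_i\mapsto\t^{\b_i}$ $(i\le p)$, $z_j\mapsto 1$ $(j\le q)$ from part~3) of Theorem~\ref{th:BPGF1}, and multiply by $\epsilon\delta\,\t^{\a}$. The outcome is $u\star v$: indeed $\epsilon\delta\,\t^{\a}\,\Phi(\t^{\b_1},\dots,\t^{\b_p},1,\dots,1)=\epsilon\delta\sum_{(\m,\n)\in Q\cap\Z^{p+q}}\t^{\a+B\m}$, and for fixed $\x$ the constraint $B\m=\x-\a$ turns $B\m-D\n=\c-\a$ into $D\n=\x-\c$, so the $\t^{\x}$-coefficient is $\epsilon\delta\,\#\{\m\in\N^{p}:B\m=\x-\a\}\cdot\#\{\n\in\N^{q}:D\n=\x-\c\}=[\t^{\x}](u\star v)$. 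By part~3) of Theorem~\ref{th:BPGF1} this is a short power series in the form $(\divideontimes)$ of index $\le p+q\le s_1+s_2$, computed in polynomial time. Summing over the $\polyin(\phi(f)+\phi(g))$ pairs of summands yields an expression of the form $(\divideontimes)$, of index $\le s_1+s_2$ and length $\polyin(\phi(f)+\phi(g))$, whose $\t^{\x}$-coefficient is $([\t^{\x}]f)([\t^{\x}]g)\in\{0,1\}$ since $f,g$ are GFs; hence it lies in $\GF_{m,s_1+s_2}$ and equals $f\star g$.

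\emph{Main obstacle.} The crux is the step above: recognising $u\star v$ as a monomial substitution into the lattice-point GF of $Q$ and checking that substitution is valid. In particular, $z_j\mapsto 1$ must not kill a denominator: each factor $1-\y^{\b}\z^{\b'}$ produced by Barvinok's algorithm has $(\b,\b')$ in the recession cone of $Q$, so $(\b,\b')\ge 0$ and $B\b=D\b'$; if $\b=0$ then $D\b'=0$ with $\b'\ge 0$, forcing $\b'=0$ by the halfspace normalisation of the $\d_j$, which is impossible for a denominator vector. (Alternatively, one appeals to the limiting, residue-based version of the substitution in part~3) of Theorem~\ref{th:BPGF1}, which tolerates vanishing denominators.) One must also verify that the index never exceeds $p+q$ through Barvinok's algorithm and the substitution, and that summing over polynomially many summand-pairs causes only a polynomial increase in length. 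The remaining ingredients --- the halfspace normalisation, the inclusion--exclusion of parts~2) and~3), and the bit-length bookkeeping --- are routine.
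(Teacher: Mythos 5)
Your proof is correct and follows the standard Barvinok--Woods construction; the paper does not actually prove Theorem~\ref{th:BPGF2} (it is quoted as a black box from \cite{BP}), but the argument it reproduces in Section~\ref{s:lemma-proof} for the $\tau$-Hadamard product is precisely your part~1) --- the auxiliary polyhedron $Q$, Barvinok's theorem, then monomial substitution --- and your inclusion--exclusion $k=f+g-f\star g$, $p=f-f\star g$ for parts~2) and~3) is the standard derivation. One caveat: your ``direct'' argument that $z_j\gets 1$ never annihilates a denominator is not sound as stated, because the signed unimodular decompositions in Barvinok's algorithm can produce cone generators lying outside the recession cone of $Q$; however, your fallback --- the limit/residue form of monomial substitution in part~3) of Theorem~\ref{th:BPGF1}, which tolerates removable singularities --- is exactly how this is handled in \cite{BW}, so the gap is harmless.
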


 \begin{rem}\label{rem:Hadamard}
In fact, a more general version of Theorem~\ref{th:BPGF2} part 1) was shown in \cite{BP}, which also allows taking $f \star g$ for short power series.
\end{rem}

The following is the reason why we emphasized the bounded dimension~$n$ and index~$s$ in Definition~\ref{def:GF_def}.

\begin{prop}\label{prop:supp_is_P}
  Fix $n$ and $s$. Given a short power series $f(\t) = \sum \be_{\x} \t^{\x}$ in $\GF_{n,s}$ and a vector $\a_{0} \in \Z^{n}$, the coefficient $\beta_{\a_{0}}$ can be computed in time $\polyin(\phi(f) + \phi(\a_{0}))$.
\end{prop}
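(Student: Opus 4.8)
The plan is to reduce the computation of $\beta_{\a_0}$ to counting lattice points in finitely many polytopes of \emph{bounded} dimension, and then invoke Barvinok's theorem. Write $f$ in the form $(\divideontimes)$ with summands
\[
g_i(\t)\;=\;\frac{c_i\,\t^{\a_i}}{(1-\t^{\b_{i1}})\cdots(1-\t^{\b_{ik_i}})},\qquad i=1,\dots,M,
\]
where $M\le\phi(f)$ and $k_i\le s$. Since $\beta_{\a_0}=\sum_{i=1}^M[\t^{\a_0}]\,g_i(\t)$ (all coefficients taken with respect to one common expansion, see below) and $M=\polyin(\phi(f))$, it is enough to compute each $[\t^{\a_0}]\,g_i$ within the stated time bound. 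First I would normalize the denominators: fix a generic integer vector $\c$ with $\langle\c,\b_{ij}\rangle\neq0$ for all $i,j$ and of bit length $\polyin(\phi(f))$ --- e.g.\ $\c=(1,N,\dots,N^{n-1})$ with $N$ larger than twice the maximum entry of the $\b_{ij}$ --- and, for each pair $(i,j)$ with $\langle\c,\b_{ij}\rangle<0$, replace $\frac{1}{1-\t^{\b_{ij}}}$ by $\frac{-\t^{-\b_{ij}}}{1-\t^{-\b_{ij}}}$, absorbing the sign into $c_i$ and the monomial into $\a_i$. This is precisely the half-space with respect to which the short power series $f$ is expanded, so afterwards we may assume $\langle\c,\b_{ij}\rangle\ge1$ for all $i,j$, with the $c_i$ and $\a_i$ still of bit length $\polyin(\phi(f))$.

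With this normalization each factor expands as $\frac{1}{1-\t^{\b_{ij}}}=\sum_{m\ge0}\t^{m\b_{ij}}$, hence
\[
[\t^{\a_0}]\,g_i(\t)\;=\;c_i\cdot\bigl|\,Q_i\cap\Z^{k_i}\,\bigr|,\qquad
Q_i\;=\;\Bigl\{\mathbf m\in\R^{k_i}_{\ge0}\ :\ \textstyle\sum_{j=1}^{k_i}m_j\,\b_{ij}=\a_0-\a_i\Bigr\}.
\]
The polytope $Q_i$ sits in $\R^{k_i}$ with $k_i\le s$ \emph{fixed}. It is bounded: pairing the defining equation with $\c$ gives $\sum_j m_j\langle\c,\b_{ij}\rangle=\langle\c,\a_0-\a_i\rangle$, so either $\langle\c,\a_0-\a_i\rangle<0$, in which case $Q_i=\nothing$ and $[\t^{\a_0}]g_i=0$, or every $m_j$ is at most $\langle\c,\a_0-\a_i\rangle$ because each $\langle\c,\b_{ij}\rangle\ge1$. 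Its defining linear system has entries among the $\b_{ij}$ and the coordinates of $\a_0-\a_i$, so $\phi(Q_i)=\polyin(\phi(f)+\phi(\a_0))$.

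Now apply Barvinok's theorem (Theorem~\ref{th:Barvinok}) to $Q_i$: in time $\polyin(\phi(Q_i))$ it yields a short GF for $Q_i\cap\Z^{k_i}$ of length $\polyin(\phi(Q_i))$, and evaluating that short GF at $\t=1$ via part~2 of Theorem~\ref{th:BPGF1} gives the integer $|Q_i\cap\Z^{k_i}|$, again in polynomial time. Summing over $i$, we obtain $\beta_{\a_0}=\sum_{i=1}^M c_i\,|Q_i\cap\Z^{k_i}|\in\Q$ in time $\polyin(\phi(f)+\phi(\a_0))$, the answer being a rational of that bit length. The one step that needs care is the normalization in the first paragraph: one must check that the half-space $\{\langle\c,\x\rangle>0\}$ used there is consistent with the (implicit) expansion convention defining the given short power series, so that the coefficientwise expansion in the second paragraph --- and the linearity used in the first --- are the correct ones. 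Granting that, the rest is routine: the crucial feature is that the slicing polytopes $Q_i$ are bounded and of bounded dimension, which is exactly what makes Barvinok's theorem applicable.
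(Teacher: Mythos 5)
Your proof is correct, but it takes a genuinely different route from the paper's. The paper disposes of this in three lines by treating the Hadamard product as a black box: it sets $g(\t)=\t^{\a_0}$, computes $h=f\star g$ via Theorem~\ref{th:BPGF2} (extended to short power series by Remark~\ref{rem:Hadamard}), observes $h(\t)=\be_{\a_0}\t^{\a_0}$, and evaluates $h(1)$ via Theorem~\ref{th:BPGF1}. You instead unwind that machinery and go straight to Barvinok's theorem, reducing $[\t^{\a_0}]g_i$ to counting lattice points in the slicing polytope $Q_i\subset\R^{k_i}$; this is essentially a hands-on special case (one factor being a single monomial) of how the Hadamard product lemma is itself proved --- compare the polyhedron $P$ in \eqref{HadamardPolytope} in Section~\ref{s:lemma-proof}, which is built from the exponent data of the two series in exactly the same way. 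What your version buys is transparency: it makes explicit that the only thing keeping the computation polynomial is that the $Q_i$ live in dimension $k_i\le s$, i.e., the bounded-index hypothesis, and it is self-contained modulo Theorem~\ref{th:Barvinok}. What the paper's version buys is brevity and modularity, since Theorem~\ref{th:BPGF2} is quoted elsewhere anyway. The one caveat you flag --- that the expansion direction $\c$ must match the convention under which $f$ ``is'' the power series $\sum\be_\x\t^\x$ --- is real but is equally implicit in the paper's reliance on the Hadamard product for short power series (in the setting actually used later, supports lie in a finite box in $\N^n$, where the expansion is unambiguous). All the quantitative checks in your write-up (boundedness of $Q_i$ via pairing with $\c$, the bit-length bounds on $\c$ and on $\phi(Q_i)$, and the final bit length of $\be_{\a_0}$) are sound.
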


\begin{proof}
We let $g(\t) = \t^{\a_{0}}$ and define $h(\t) = f(\t) \, \star \, g(\t)$. Clearly, we have $h(\t) = \be_{\a_{0}} \. \t^{\a_{0}}$, which implies $\be_{\a_{0}} = h(1)$.
Applying Theorem~\ref{th:BPGF2}, we can compute $h(\t)$ (see also Remark~\ref{rem:Hadamard}). By Theorem~\ref{th:BPGF1}, we can compute $h(1)$. All can be done in time $\polyin(\phi(f) + \phi(\a_{0}))$.
\end{proof}

\begin{rem}
A similar result for $n$ and $s$ unbounded is unlikely to hold, considering the fact that $\textsc{KNAPSACK}$ is $\NP$-complete.
An instance of $\textsc{KNAPSACK}$ asks if an equation $a = \b\, \x$ is solvable, where $\x = (x_{1},\dots,x_{n}) \in \N$ are variables, and $a \in \N, \b \in \N^{n}$ are given as input.
This is equivalent to checking if $\.[t^{a}]f \neq 0$, where:
\[
f(t) \. = \. \frac{1}{(1-t^{b_{1}}) \cdots (1-t^{b_{n}})}.
\]
Here $n$ is not bounded.
Note that $\textsc{KNAPSACK}$ has a polynomial time algorithm if $a$ and $\b$ are given in unary.
In our case, short GFs are encoded in binary.
%%Note that the coefficients of $f$ are not $0/1$, but the notion of $\supp(f)$ is still well-defined.
\end{rem}

If $f$ is a short GF, Proposition~\ref{prop:supp_is_P} allows us to decide in polynomial time whether $\a_{0} \in \supp(f)$.
Now one may ask whether is it still easy to decide if a point $\a_{0}$ lies in a projection of $f$ . The answer is still positive:

\begin{prop}\label{prop:supp_proj_is_P}
Fix $m,n$ and $s$.
Given a short GF $f(\t,\u) = \sum \t^{\x} \u^{\y} \in \GF_{m+n,s}$ of finite support and a vector $\a_{0} \in \Z^{m}$, checking whether $\a_{0}\in\supp(\proj_{\x}(f))$ can be done in time $\polyin(\phi(f) + \phi(\a_{0}))$.
Here $\x \in \Z^{m}, \y \in \Z^{n}$.
\end{prop}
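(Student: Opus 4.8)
The plan is to reduce membership of $\a_0$ in $\supp(\proj_{\x}(f))$ to one Hadamard product followed by one cardinality computation, so that we never have to write down $\proj_{\x}(f)$ itself as a short GF. (This is essential: by Theorem~\ref{th:main_1} the projection need not have polynomial length, so ``compute $\proj_{\x}(f)$ and test $\a_0$'' is not an option.) The key is to first \emph{localize} $f$ to the fiber over $\a_0$, and only then project.

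First I would apply part~1) of Theorem~\ref{th:BPGF1} to compute, in time $\polyin(\phi(f))$, a bound
\[
N \, = \, \max\bigl\{ |\x|_{\infty} \,:\, \x \in \supp(f) \bigr\},
\]
so that $\supp(f) \subseteq \{-N,\dots,N\}^{m+n}$. Next, introduce the single-term short power series
\[
g(\t,\u) \, = \, \frac{\t^{\a_0} \cdot u_{1}^{-N} \cdots u_{n}^{-N}}{(1-u_{1})\cdots(1-u_{n})} \;\in\; \GFstar_{m+n,\,n},
\]
whose expansion has support $\{\a_0\} \times \{\y \in \Z^{n} : \y \ge -N\cdot 1\}$. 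This is given in the form $(\divideontimes)$, has index $n$, and length $\polyin(\phi(f)+\phi(\a_0))$, since its only constants are $\a_0$ and the exponents $-N$, each of bit length $O(\phi(f)+\phi(\a_0))$. I would then form the Hadamard product $h(\t,\u) = f(\t,\u) \star g(\t,\u)$ using part~1) of Theorem~\ref{th:BPGF2} together with Remark~\ref{rem:Hadamard} (which permits Hadamard products with short power series); this produces $h \in \GF_{m+n,\,s+n}$ in time $\polyin(\phi(f)+\phi(\a_0))$, with
\[
\supp(h) \, = \, \supp(f) \,\cap\, \bigl(\{\a_0\}\times\{\y : \y\ge -N\cdot 1\}\bigr) \, = \, \bigl\{(\a_0,\y)\,:\,(\a_0,\y)\in\supp(f)\bigr\},
\]
the last equality using $\supp(f)\subseteq\{-N,\dots,N\}^{m+n}$. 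Finally, observe that $\a_0\in\supp(\proj_{\x}(f))$ iff $\ex\,\y\;(\a_0,\y)\in\supp(f)$ iff $\supp(h)\ne\varnothing$ iff $|\supp(h)| = h(1) \ne 0$. I would compute $h(1)$ via part~2) of Theorem~\ref{th:BPGF1} in time $\polyin(\phi(h)) = \polyin(\phi(f)+\phi(\a_0))$, and answer ``yes'' precisely when it is nonzero.

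The whole argument is a routine composition of the black boxes of Section~\ref{sec:operations}; the one point that requires care is conceptual rather than computational. One must not attempt to project first, but instead restrict $f$ to the single column $\x=\a_0$ (via the Hadamard product with $g$) and only then ask whether the resulting set is empty. The finiteness of $\supp(f)$ enters in an essential way twice: it is what makes the norm $N$ well defined and poly-time computable, and it is what lets the infinite half-space in $\supp(g)$ ``stand in'' for all of $\Z^{n}$ on the support of $f$. I do not anticipate any serious obstacle beyond arranging these pieces in the right order.
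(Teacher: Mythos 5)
Your proposal is correct, but it takes a different (and slightly heavier) route than the paper. The paper's proof is a two-line reduction: set $g(\t) = f(\t,1)$ using the substitution black box (Theorem~\ref{th:BPGF1}, part~3), observe that $\a_{0}\in\supp(\proj_{\x}(f))$ iff $[\t^{\a_{0}}]g \neq 0$ (the coefficient counts the fiber over $\a_{0}$, and $f$ has $0/1$ coefficients so no cancellation occurs), and then invoke Proposition~\ref{prop:supp_is_P} to extract that coefficient. You instead localize first and evaluate second: you Hadamard-multiply $f$ by an indicator of the fiber $\{\x=\a_{0}\}$ and then compute $h(1)$. Both orders of operations work and both stay within the stated black boxes, but your version pays two costs the paper avoids: you need the norm computation to build the auxiliary series $g(\t,\u)$, and because that series has infinite support you must lean on Remark~\ref{rem:Hadamard} rather than on Theorem~\ref{th:BPGF2} as literally stated (you could sidestep this by replacing the half-space $\{\y\ge -N\cdot 1\}$ with the finite box $[-N,N]^{n}$, i.e.\ $g = \t^{\a_{0}}\prod_{i}(u_{i}^{-N}-u_{i}^{N+1})/(1-u_{i})$, which is a finite-support GF). What your route buys is that it never needs the substitution $\u\gets 1$ applied to $f$ itself, and it makes explicit that the whole problem is ``count the fiber and test for zero''; what the paper's route buys is brevity and direct reuse of Proposition~\ref{prop:supp_is_P}, whose own proof is essentially the $m$-variable special case of your Hadamard-then-evaluate step.
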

\begin{proof}
%%Let $f = \sum \t^{\x} \u^{\y}$ with $\x \in \Z^{m}$ and $\y \in \Z^{n}$.
%% Since $f$ has finite support, by Theorem~\ref{th:BPGF1} a), we can compute in polynomial time an $N$ for which $\supp(f) \in [-N,N]^{m+n}$ and $\log(N) = \polyin(\phi(f))$.
%% Define
%% \[
%% g(\t,\u) = \t^{\a_{0}} \Biggl( \sum_{\y \in [-N,N]^{n}} \u^{\y} \Biggr)   \; = \; \t^{\a_{0}} \, \prod_{i=1}^{n} \frac{u_{i}^{-N} - u_{i}^{N+1}}{1 - u_{i}},
%% \]
%% which is a short GF in $\.\GF_{m+n,n}\.$ of length $\polyin(\log(N) + \phi(\a_{0}))$.
%% We have $\a_{0} \in \supp(\proj_{\x}(f))$ if and only if there is a $\y \in [-N,N]^{n}$ for which $(\a_{0},\y) \in \supp(f)$.
%% This is equivalent to $f \star g \neq 0$, which again can be checked in polynomial time by Theorem~\ref{th:BPGF2}.
Let $g(\t) = f(\t, 1)$.
Clearly, we have $\a_{0} \in \supp(\proj_{\x}(f))$ if and only if the coefficient of $\,\t^{\a_{0}}$ in $g(\t)$ is non-zero.
By Theorem~\ref{th:BPGF1}, we can compute $g$ in time $\polyin(\phi(f))$.
By Proposition~\ref{prop:supp_is_P}, we can compute $[\t^{\a_{0}}]g$ in time $\polyin(\phi(g) + \phi(\a_{0})) \le \polyin(\phi(f) + \phi(\a_{0}))$.
\end{proof}

%% Given a short GF in $3$ variables $\x,\y$ and $\z$, one may ask if it is still easy to decide if a vector $\c_{0}$ lies in the support of $\proj_{\x}(\proj_{\x,\y}(f))$, i.e., $f$ projected twice.
%% Of course, this is still easy because two consecutive projections $\proj_{\x}$ and $\proj_{\x,\y}$ can be combined to a single projection $\proj_{\x}$ (with $\y$ and $\z$ projected simultaneously).
%% What happens if we insert a simple operation in between the two projections?
%% Then this becomes a very difficult problem (see Corollary~\ref{cor:2_proj_hard}).

In order to further study the projections of short GFs, we need a few logical tools.

\subsection{Presburger arithmetic and short GFs}\label{sec:pres}

\emph{Presburger Arithmetic} ($\Pr$) is the first order theory on the integers that allows only additions and inequalities. Each \emph{atom} (smallest term) in $\Pr$ is an integer inequality of the form
\[
a_{1} x_{1} + \ldots + a_{n} x_{n} \le b,
\]
where $\x = (x_{1},\dots,x_{n})$ are integer variables, and $a_{1},\dots,a_{n},b \in \Z$ are integer constants.
A general $\Pr$ formula is formed by taking Boolean combinations (negations, conjunctions, disjunctions) of such atoms, and also applying quantifiers ($\for/\ex$) over different variables. A sentence in $\Pr$ is a formula with all variables quantified.
The length~$\phi(F)$ of a $\Pr$ formula $F$ is the total length of all symbols and constants in~$F$ written in binary.

\begin{eg}
Let $P,Q \subseteq \R^{n}$ be two rational polyhedra given by two systems $A_{1} \x \le \b_{1}$ and $A_{2} \x \le \b_{2}$. Then the set of integer points in $P \cup Q$ is described by the $\Pr$ formula:
\[
F \, = \, \bigl\{ \x  : A_{1}\x \le \b_{1} \lor A_{2}\x \le \b_{2} \bigr\}.
\]
Here we are identifying the $\Pr$ formula $F$ with the set that it defines.
\end{eg}

\begin{eg}
{\rm
The $\Pr$ formula $\ts F = \bigl\{x \. : \. \for y \; \, (5y \ge x+1) \, \lor \,  (5y \le x - 1) \bigr\}$ determines the set of non-multiples of~$5$.
}
\end{eg}

\begin{defi}
  For a set $S \subseteq \zz^{n}$, denote by $\GFof(S; \t)$ the GF
  \begin{equation*}
\GFof(S;\t) \, = \, \sum_{\x \in S} \t^{\x}.
\end{equation*}
\end{defi}

$\Pr$ formulas are very well-suited to capture integer points in polyhedra.
The following cornerstone result by Barvinok says that integer points in a polyhedron in bounded dimension can be effectively enumerated by a short GF.

\begin{theo}[\cite{B1}]\label{th:Barvinok}
Fix $n$. Let $Q \subseteq \R^{n}$ be a rational polyhedron described by $A\x \le \b$.
There exists a short GF $f \in \GF_{n,n}$ with $\GFof(Q \cap \Z^{n};\t) = f(\t)$,
which can be computed in time $\polyin(\phi(Q))$.\footnote{This implies that $\phi(f) \le \polyin(\phi(Q))$.}
\end{theo}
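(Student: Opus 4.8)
The plan is to reproduce Barvinok's algorithm from \cite{B1}. Its three ingredients are Brion's theorem, triangulation of cones, and the signed decomposition of a simplicial cone into unimodular ones; the key point throughout is to keep the number of pieces polynomial when $n$ is fixed. Recall first that $\GFof(\cdot;\t)$ extends to a valuation on the algebra of rational polyhedra, cones (and polyhedra) containing an affine line being sent to $0$; consequently any inclusion--exclusion identity among indicator functions yields a corresponding identity among the associated rational functions, and every summand we produce below lies in $\GFstar_{n,n}$.

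\textbf{Step 1 (tangent cones, Brion).} Since a polyhedron containing a line has zero valuation we may assume $Q$ is pointed. Writing $\mathrm{co}(Q,v)$ for the tangent cone of $Q$ at a vertex $v$ (built from the edges, resp.\ recession directions, of $Q$ emanating from $v$), Brion's theorem gives the identity of rational functions
\[
\GFof(Q\cap\zz^n;\t)\;=\;\sum_{v}\GFof\bigl(\mathrm{co}(Q,v)\cap\zz^n;\t\bigr),
\]
the sum over vertices $v$ of $Q$. In fixed dimension the vertices of $Q$ and the generators of each $\mathrm{co}(Q,v)$ have bit-length $\polyin(\phi(Q))$ and can be enumerated in polynomial time.

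\textbf{Step 2 (triangulation and the unimodular formula).} Triangulate each pointed cone $\mathrm{co}(Q,v)$ into simplicial cones, each spanned by $n$ linearly independent integer rays, and correct for shared lower-dimensional faces by inclusion--exclusion (those faces are again polyhedra, so the valuation identity persists). In fixed dimension this produces $\polyin(\phi(Q))$ simplicial cones of polynomial bit-length. For a simplicial cone $K$ with apex $v$ spanned by $g_1,\dots,g_n$ forming a $\zz$-basis of $\zz^n$ (a \emph{unimodular} cone) one has the closed form
\[
\GFof(K\cap\zz^n;\t)\;=\;\frac{\t^{w}}{(1-\t^{g_1})\cdots(1-\t^{g_n})},
\]
where $w$ is the unique lattice point in $v+\{\sum_i\lambda_i g_i:0\le\lambda_i<1\}$; this term has index $n$. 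So it remains to express each simplicial cone as a signed sum of unimodular ones.

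\textbf{Step 3 (signed decomposition --- the main point).} Given a simplicial cone with linearly independent integer generators $g_1,\dots,g_n$ and index $\gamma=|\det(g_1,\dots,g_n)|$, scale the parallelepiped they span by $\gamma^{-1/n}$: its volume becomes $2^n$, so by Minkowski's theorem it contains a nonzero lattice vector $w=\sum_i\epsilon_i g_i$ with all $|\epsilon_i|\le\gamma^{-1/n}$; in fixed dimension such a $w$ is found in polynomial time (e.g.\ by LLL). For each $i$, the simplicial cone obtained by replacing $g_i$ with $w$ has index $|\epsilon_i|\,\gamma\le\gamma^{(n-1)/n}$, and $\GFof$ of the original cone is a signed sum of the $\GFof$'s of these $\le n$ cones modulo lower-dimensional cones, again absorbed by inclusion--exclusion. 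Iterating, the index contracts as $\gamma\mapsto\gamma^{(n-1)/n}$, so after $O(\log\log\gamma)$ rounds every cone is unimodular; with branching $\le n$ this yields $n^{O(\log\log\gamma)}=\polyin(\phi(Q))$ unimodular cones, each of polynomial bit-length, since $\log\gamma=\polyin(\phi(Q))$ in fixed dimension.

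\textbf{Step 4 (assembly).} Substituting the closed form of Step 2 for every signed unimodular cone produced in Steps 1--3 yields a rational function $f(\t)$ with $\GFof(Q\cap\zz^n;\t)=f(\t)$; each summand having index $n$, we get $\ind{f}\le n$, i.e.\ $f\in\GF_{n,n}$, and the counting above gives $\phi(f)=\polyin(\phi(Q))$, with $f$ produced in polynomial time. The main obstacle is precisely Step 3: a careless triangulation into unimodular cones creates on the order of $\gamma$ pieces, which is exponential in $\phi(Q)$, and the whole theorem rests on the short-vector trick that contracts the index from $\gamma$ to $\gamma^{(n-1)/n}$ at each level, bounding the total count by a polynomial in fixed dimension. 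Brion's identity, triangulation and the unimodular closed form are, by contrast, standard.
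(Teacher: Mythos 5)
The paper does not prove this theorem: it is imported as a black box from Barvinok's work \cite{B1} (see also \cite{BP}), so there is no in-paper argument to compare against. Your sketch correctly reproduces the standard proof from that source — Brion's theorem, triangulation, and the Minkowski/LLL signed decomposition with index contraction $\gamma\mapsto\gamma^{(n-1)/n}$ giving $n^{O(\log\log\gamma)}=\polyin(\phi(Q))$ unimodular pieces in fixed dimension — and correctly identifies Step 3 as the crux.
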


We mention a useful tool about quantifier free $\Pr$ formulas:

\begin{prop}[{\cite[Prop.~5.2.2]{Woods}}]\label{prop:W}
Fix $n$. Let $\Phi(\x)$ be a Boolean combination of linear inequalities in integer variables $\x = (x_{1},\dots,x_{n})$. Then we have:
\begin{equation*}
\Phi(\x) = \textup{true} \quad \iff \quad \bigvee_{i=1}^{r} \x \in P_{i} \cap \Z^{n},
\end{equation*}
where $P_{1},\dots,P_{r} \subseteq \R^{n}$ are disjoint polyhedra and $r \le \polyin(\phi(\Phi))$.
The system defining each $P_{i}$ can be computed in time $\polyin(\phi(\Phi))$.
\end{prop}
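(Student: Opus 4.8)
The plan is to prove this by a hyperplane arrangement (cell decomposition) argument, using integrality to turn the open cells of the arrangement into genuine closed, pairwise disjoint polyhedra. List the atoms occurring in $\Phi$ as $\ell_1(\x) \le b_1, \ldots, \ell_m(\x) \le b_m$, where the $\ell_j$ are integer linear forms and $b_j \in \Z$; note that $m$ and all the bit lengths $\phi(\ell_j), \phi(b_j)$ are bounded by $\phi(\Phi)$. Consider the real arrangement of the hyperplanes $H_j = \{\x : \ell_j(\x) = b_j\}$ in $\R^n$. Each point of $\R^n$ gets a sign vector in $\{-,0,+\}^m$ recording the sign of $\ell_j(\x) - b_j$ for each $j$; the nonempty fibers of this map are the relatively open cells of the arrangement and partition $\R^n$, and on each cell $C$ every atom, hence $\Phi$ itself, has a constant truth value that is read off from the sign vector of $C$.

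First I would enumerate the nonempty cells in polynomial time by incremental construction: keep a list of cells, initialized to $\{\R^n\}$; when $H_j$ is added, replace each cell by those among its three refinements (intersection with $H_j$, with the open halfspace $\ell_j(\x) > b_j$, and with $\ell_j(\x) < b_j$) that are nonempty, the nonemptiness test being a linear program. Here the hypothesis that $n$ is fixed is essential: by the classical bound of Zaslavsky/Buck, an arrangement of $j$ hyperplanes in $\R^n$ has $O(j^n)$ faces of all dimensions, so the list never exceeds $O(m^n) = \polyin(\phi(\Phi))$ cells, and the entire enumeration — together with the defining systems (equalities and strict/non-strict inequalities) of each surviving cell — runs in time $\polyin(\phi(\Phi))$.

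Next I would convert each surviving cell $C$ into a closed polyhedron using integrality: for $\x \in \Z^n$ and integer data, $\ell_j(\x) - b_j > 0 \iff \ell_j(\x) \ge b_j + 1$ and $\ell_j(\x) - b_j < 0 \iff \ell_j(\x) \le b_j - 1$, so replacing the strict inequalities in the description of $C$ by these non-strict ones yields a closed polyhedron $P_C$ with $P_C \cap \Z^n = C \cap \Z^n$ and $\phi(P_C) \le \polyin(\phi(\Phi))$. The $P_C$ are pairwise disjoint even as subsets of $\R^n$: two distinct cells differ in some coordinate $j$ of their sign vectors, and the shifted constraints $\{\ell_j \le b_j - 1\}$, $\{\ell_j = b_j\}$, $\{\ell_j \ge b_j + 1\}$ are mutually exclusive everywhere. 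Taking $P_1, \ldots, P_r$ to be the $P_C$ for exactly those cells on which $\Phi$ evaluates to true gives disjoint polyhedra with $r = O(m^n) \le \polyin(\phi(\Phi))$, computable within the same time bound, and $\Phi(\x)$ holds iff $\x \in \bigcup_i (P_i \cap \Z^n)$. The main obstacle is not any single hard step but keeping the three ingredients compatible: organizing the DNF over the atoms through the arrangement so the number of pieces stays $O(m^n)$ rather than $3^m$ (which needs the fixed-dimension face bound), and converting the open cells into genuine closed, pairwise disjoint polyhedra (which needs integrality for both the strict-to-non-strict shift and the disjointness bookkeeping); the polynomial-time feasibility checks themselves are routine linear programming.
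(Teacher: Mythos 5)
Your argument is correct, and it is essentially the standard proof of this fact: decompose $\R^n$ by the arrangement of the hyperplanes underlying the atoms, use the fixed-dimension face bound $O(m^n)$ to keep the number of cells (and the incremental enumeration) polynomial, and use integrality to replace strict inequalities by shifted non-strict ones so the cells become genuinely disjoint closed polyhedra with the same integer points. The paper itself gives no proof here — it cites the result from Woods's thesis — and your write-up is a faithful, self-contained reconstruction of that argument with no gaps.
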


Theorem~\ref{th:Barvinok} can be generalized to quantifier free $\Pr$ formula in bounded dimension:

\begin{theo}[{\cite[Prop.~5.3.1]{Woods}}]\label{th:W}
Fix $n$. Let $G = \{\x \in \Z^{n} : \Phi(\x)\}$ be a $\Pr$ formula with $\Phi$ a quantifier free Boolean combination of linear inequalities in $\x$.
There exists a short GF $g \in \GF_{n,n}$ with $\GFof(G;\t) = g(\t)$, which can be computed in time $\polyin(\phi(\Phi))$.
%%and satisfies $\phi(f) = \polyin(\phi(\Phi))$.
\end{theo}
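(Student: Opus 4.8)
The plan is to reduce to Barvinok's Theorem~\ref{th:Barvinok} via the polyhedral decomposition of Proposition~\ref{prop:W}. First I would apply Proposition~\ref{prop:W} to the quantifier-free formula $\Phi$: this produces pairwise disjoint rational polyhedra $P_1,\dots,P_r \subseteq \R^n$ with $r \le \polyin(\phi(\Phi))$, each $P_i$ given by a linear system computable in time $\polyin(\phi(\Phi))$, such that $\ts G = \{\x \in \Z^n : \Phi(\x)\} = \bigsqcup_{i=1}^r (P_i \cap \Z^n)$. In particular $\phi(P_i) \le \polyin(\phi(\Phi))$ for every $i$.

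Next, for each $i$ I would invoke Theorem~\ref{th:Barvinok} on $P_i$ to obtain a short GF $f_i \in \GF_{n,n}$ with $\GFof(P_i \cap \Z^n;\t) = f_i(\t)$, computed in time $\polyin(\phi(P_i)) \le \polyin(\phi(\Phi))$, so that $\phi(f_i) \le \polyin(\phi(\Phi))$ as well. Since the $P_i$ are pairwise disjoint, $G$ is their disjoint union, and hence
\[
\GFof(G;\t) \;=\; \sum_{i=1}^r \GFof(P_i \cap \Z^n;\t) \;=\; \sum_{i=1}^r f_i(\t) \;=:\; g(\t),
\]
where every coefficient of $g$ equals $0$ or $1$ precisely because each $\x$ lies in at most one $P_i$; thus $g$ is a genuine GF with $\supp(g) = G$. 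Writing $g$ in the form $(\divideontimes)$ by concatenating the summands of all the $f_i$, we get $\ind{g} = \max_i \ind{f_i} \le n$, so $g \in \GF_{n,n}$, and $\phi(g) \le \sum_{i=1}^r \phi(f_i) \le r \cdot \polyin(\phi(\Phi)) = \polyin(\phi(\Phi))$. The whole procedure — one call to Proposition~\ref{prop:W} followed by $r$ calls to Barvinok's algorithm — runs in time $\polyin(\phi(\Phi))$.

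The one point that genuinely matters is that the decomposition in Proposition~\ref{prop:W} is into \emph{disjoint} pieces: this is exactly what lets us add the $f_i$ directly, rather than running the union algorithm of Theorem~\ref{th:BPGF2}, which would inflate the index from $n$ to $rn$ and thereby leave the class $\GF_{n,n}$. A secondary, and harmless, subtlety is that the $P_i$ (and hence $G$) may be unbounded, so $G$ need not be finite; this causes no difficulty, since Theorem~\ref{th:Barvinok} applies to arbitrary rational polyhedra and the notion of GF used here imposes no finiteness condition, only that coefficients lie in $\{0,1\}$, all identities being understood at the level of rational functions.
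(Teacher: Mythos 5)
Your proof is correct and is essentially identical to the paper's: apply Proposition~\ref{prop:W} to decompose $\Phi$ into polynomially many disjoint polyhedra, run Barvinok's algorithm (Theorem~\ref{th:Barvinok}) on each, and add the resulting short GFs, with disjointness guaranteeing the sum has $0/1$ coefficients and stays in $\GF_{n,n}$. The extra remarks on disjointness versus the union algorithm and on unbounded polyhedra are accurate but not needed beyond what the paper already does.
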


\begin{proof}
By Proposition~\ref{prop:W}, we can rewrite $\Phi$ as a disjoint union of polyhedra $P_{1},\dots,P_{r}$ with $r \le \polyin(\phi(\Phi))$.
The system defining each $P_{i}$ can be computed in polynomial time.
Applying Theorem~\ref{th:Barvinok}, we get a short GF $f_{i} \in \GF_{n,n}$ of polynomial length for each $P_{i}$.
Summing up all $f_{i}$, we get a short GF $g \in \GF_{n,n}$ of length $\polyin(\phi(\Phi))$ for $G$.
\end{proof}

Next, we consider $\Pr$  formulas with quantifiers.
In the simplest case, $F$ encodes the projection of integer points in a polyhedron. For this, we have:

\begin{theo}[\cite{BW,NP}]\label{th:BW}
Fix $m, n \in \nn$.
Let $Q \subseteq \R^{m}$ be a rational polyhedron given by a system $A\x \le \b$, and $T : \Z^{m} \to \Z^{n}$ a linear map.
Consider the $\Pr$ formula
\[
G \. = \. \bigl\{\y \in \Z^{n} : \ex \, \x \in \Z^{m} \; (\x \in Q) \land (\y = T\x)\bigr\}\ts.
\]
Then there exists a short GF $g$ with $\GFof(G;\t) = g(\t)$, which can be computed in time $\polyin(\phi(Q) + \phi(T))$.
Furthermore, we have $g \in \GF_{n,s}$, where $s=s(m)$ is a constant.
\end{theo}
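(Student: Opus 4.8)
The plan is to reproduce the argument of Barvinok and Woods~\cite{BW} from the tools already assembled above. \emph{First}, I would get rid of the linear map $T$ in favour of a coordinate projection by passing to the \emph{graph polyhedron}
\[
\hat Q \, := \, \bigl\{(\x,\y)\in\R^{m}\times\R^{n} \;:\; \x\in Q,\ \y=T\x\bigr\}\ssu\R^{m+n},
\]
which satisfies $\phi(\hat Q)=\polyin(\phi(Q)+\phi(T))$ and, because $T$ is integral, $\hat Q\cap\Z^{m+n}=\{(\x,T\x):\x\in Q\cap\Z^{m}\}$, so that $G=\proj_{\y}(\hat Q\cap\Z^{m+n})$. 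Applying Barvinok's Theorem~\ref{th:Barvinok} to $\hat Q$ produces, in time $\polyin(\phi(Q)+\phi(T))$, a short GF $F(\t,\u)$ with $\GFof(\hat Q\cap\Z^{m+n};\t,\u)=F(\t,\u)$, of bounded index and length $\polyin(\phi(Q)+\phi(T))$. Everything then reduces to extracting a short GF for the $\u$-projection of $F$.

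The catch is that $F$ is \emph{not} a $\y$-specialization: the naive substitution $F(\mathbf 1,\u)$, which is a short power series by Theorem~\ref{th:BPGF1}(3), has $[\u^{\y}]$ equal to the number of lattice points in the slice $\hat Q\cap T^{-1}(\y)$, and this is typically $>1$. So the real work is to \emph{deduplicate}: to build a short GF $F^{*}(\t,\u)$ whose support selects exactly one lattice point $(\x,\y)\in\supp(F)$ for each $\y\in G$. Once such an $F^{*}$ is available it \emph{is} a $\y$-specialization, so $g(\t):=F^{*}(\mathbf 1,\u)$ is a genuine GF equal to $\GFof(G;\u)$, and Theorem~\ref{th:BPGF1} guarantees $g$ is short and of bounded index $s=s(m)$, with $\phi(g)=\polyin(\phi(Q)+\phi(T))$.

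To produce $F^{*}$ I would select, for each $\y\in G$, the lexicographically least lattice point of the polytope $\hat Q\cap T^{-1}(\y)$, and characterize membership in the resulting set coordinate by coordinate: $(\x,\y)$ lies in it iff $(\x,\y)\in\supp(F)$ and, for each $j$, the coordinate $x_{j}$ is the least integer for which $(x_{1},\dots,x_{j},\y)$ extends to a lattice point of $\hat Q$ with the prescribed first $j{-}1$ coordinates. This is a conjunction of single-coordinate ``take the minimum of the fibre'' operations, each of which can be implemented on short GFs by a set-difference of shift form $S\mapsto\{p\in S:p-e_{j}\notin S\}$ via Theorem~\ref{th:BPGF2}(3); after performing one such elimination one re-expresses the resulting set as a \emph{disjoint} union of polyhedra (allowing congruence conditions) and re-applies Barvinok's theorem through Proposition~\ref{prop:W} and Theorem~\ref{th:W}, which resets the index to a bounded value, since a disjoint union is realized by \emph{adding} the rational functions of the form $(\divideontimes)$ rather than multiplying denominators. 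Iterating the elimination $m$ times and then specializing $\t\to\mathbf 1$ yields $g\in\GF_{n,s}$.

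The hard part is precisely this last step. The clause ``$\x$ is the lexicographically least preimage of $\y$'' is itself a universally quantified (projection) statement, so it \emph{cannot} be handled by blindly iterating a single-coordinate elimination on an arbitrary short GF — Woods's $\coNP$-hardness theorem (Theorem~\ref{th:GF_NP_hard}) shows as much. The construction goes through only because every intermediate set inherits enough structure from the convexity of $\hat Q$ that the minima over fibres remain effectively computable; in particular, after the first projection the projected fibres may acquire gaps, and the naive shift-difference above has to be replaced by a more careful selection. Running all $m$ rounds while simultaneously keeping the bit length polynomial \emph{and} keeping the index bounded by a constant — which is exactly where the disjointness in Proposition~\ref{prop:W} is essential — is the technical heart of~\cite{BW}.
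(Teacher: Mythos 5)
The paper does not actually prove this statement: Theorem~\ref{th:BW} is quoted from \cite{BW} (polytope case) and \cite{NP} (unbounded case) and used throughout as a black box, so there is no internal proof to compare yours against. Judged on its own terms, your proposal is a correct high-level outline of the Barvinok--Woods strategy --- pass to the graph polyhedron $\hat Q$, apply Theorem~\ref{th:Barvinok}, then deduplicate the fibres so that the substitution $\t\to\mathbf 1$ becomes a specialization in the sense of Definition~\ref{def:proj_and_spec} --- but it has a genuine gap, which you yourself flag: the deduplication step is asserted rather than proved, and it is the entire content of the theorem.

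Concretely: the shift-difference $S\mapsto\{p\in S: p-e_{j}\notin S\}$ selects the fibre-minimal points only when every fibre is an interval of consecutive integers. That holds for the first coordinate eliminated from $\hat Q\cap\Z^{m+n}$ (by convexity of $\hat Q$), but fails from the second round onward, and your proposed repair --- re-express the intermediate set as a polynomial-size disjoint union of polyhedral pieces with congruence conditions and re-apply Proposition~\ref{prop:W} and Theorem~\ref{th:W} --- is unjustified and essentially circular: Proposition~\ref{prop:W} applies to quantifier-free formulas, whereas showing that a one-coordinate projection of lattice points admits such a polynomial-size quantifier-free description is itself the quantifier-elimination step whose cost is in question (Remark~\ref{rem:PA_strict} shows this blows up super-polynomially in general; the reason it does not blow up here is the convexity structure that \cite{BW} exploit through a covering argument for lattice points near the boundary of the fibres, which your sketch does not supply). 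You also do not address the unbounded case, which the statement permits and which requires the additional decomposition of \cite{NP}. So the proposal reduces the theorem to the main technical lemma of \cite{BW} without proving it; as a citation-level justification that matches how the paper itself treats the result, but as a self-contained proof it is incomplete.
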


\begin{rem}
The above theorem was proved in \cite{BW} for the case when $P$ is a polytope.
It was recently extended  in~\cite{NP} to all (possibly unbounded) polyhedra.
\end{rem}

However, for general $\ex$-formulas, finding a short GF for $F$ becomes $\coNP$-hard:

\begin{theo}[{\cite[Th.~5.3.2]{Woods}}]\label{th:GF_NP_hard}
Let $\Phi(x,y)$ be a quantifier free Boolean combination of linear inequalities in $x$ and $y$ (singletons).
Consider
\[
F = \{y \in \Z  : \ex x \in \Z \;\; \Phi(x,y)\}.
\]
Then computing a short GF for F is $\coNP$-hard.
\end{theo}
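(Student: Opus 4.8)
The plan is to establish $\coNP$-hardness by a polynomial-time reduction from (the complement of) \textsc{Subset-Sum}: given $a_1,\dots,a_n,b\in\N$, decide whether $\sum_{i\in S}a_i=b$ for some $S\subseteq\{1,\dots,n\}$. The leverage comes from the richness of short GFs established earlier: if $g$ is a short GF with $\supp(g)=F$, then by Proposition~\ref{prop:supp_is_P} membership in $F$ is decidable in polynomial time, and by Theorem~\ref{th:BPGF1} the cardinality $|F|=g(1)$ (and the largest element of $F$) can also be read off in polynomial time. So it suffices to attach to each \textsc{Subset-Sum} instance, in polynomial time, a quantifier-free formula $\Phi(x,y)$ and a target $y_0$ of polynomial size such that $y_0\in F$ precisely when the instance is a yes-instance.

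For the gadget I would encode a candidate subset together with the associated partial-sum computation by a radix trick. Fix a base $B$ exceeding $\sum_i a_i$; using a polynomial-size Boolean combination of linear inequalities, force $x$ to be the base-$B$ concatenation of the running partial sums $0,\ \ve_1a_1,\ \ve_1a_1+\ve_2a_2,\dots$ of a choice vector $\ve\in\{0,1\}^n$: consecutive base-$B$ ``blocks'' of $x$ must differ by $0$ or $a_i$ (a linear constraint once the block in question is singled out, which the companion variable $y$ is used to do), and the last block must equal $b$. Taking $y_0$ to be the value recording ``the terminal block equals $b$'', one gets $y_0\in F\iff\ex\ve\in\{0,1\}^n:\sum_i\ve_ia_i=b$. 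By Theorem~\ref{th:W} (or Theorem~\ref{th:Barvinok} applied to a disjunctive normal form, cf.\ Proposition~\ref{prop:W}) the set $\{(x,y):\Phi(x,y)\}$ has a short GF of polynomial length, so a hypothetical projection procedure is being fed a perfectly legitimate input, and its output is a short GF for $F$.

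Putting the pieces together: a polynomial-time algorithm producing a short GF $g$ with $\supp(g)=F$, followed by the polynomial-time membership test of Proposition~\ref{prop:supp_is_P} at $y_0$, would decide \textsc{Subset-Sum} in polynomial time and hence give $\Pp=\NP$; equivalently, computing a short GF for $F$ is $\coNP$-hard. (One may instead use Theorem~\ref{th:BPGF1} to recover $|F|$, if the gadget is arranged so that $|F|$ rather than a single membership bit separates yes- from no-instances.) The delicate step is the gadget: one must simulate a search over the $2^n$ subsets using only the scalar $x$ and the single outer $\ex x$, keeping $\Phi$ of polynomial length, and must do so in a way that does \emph{not} present $\{(x,y):\Phi\}$ as the integer points of a single polyhedron --- otherwise the Barvinok--Woods Theorem~\ref{th:BW} would compute the projection in polynomial time and the hardness would evaporate. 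That the disjunctive (Boolean) structure of $\Phi$ is what blocks the positive result, and that the radix bookkeeping can be carried out with linear inequalities alone, are the points requiring care.
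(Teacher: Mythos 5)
The paper does not reprove this statement --- it is quoted from Woods's thesis --- so your proposal has to stand on its own, and unfortunately it has two genuine gaps, one of which is fatal to the whole strategy.

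The fatal one is the shape of your reduction. You reduce a yes/no instance to the question ``is $y_0\in F$?'' for a single, explicitly computed target $y_0$. But for a \emph{fixed} $y_0$, the formula $\Phi(x,y_0)$ is a Boolean combination of polynomially many half-lines in the single variable $x$, so $\{x:\Phi(x,y_0)\}$ is a union of polynomially many integer intervals whose endpoints you can list and test directly; its nonemptiness is decidable in polynomial time without ever touching generating functions. (This is exactly the content of Proposition~\ref{prop:supp_proj_is_P}: single-point membership in a projection is easy.) So if your gadget worked, it would place \textsc{Subset-Sum} in $\Pp$ outright --- which tells you the gadget cannot exist, and also that no reduction whose output is a single membership query can certify hardness here. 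The hardness of Woods's theorem lives entirely in \emph{global} information carried by the short GF (e.g.\ $|F|$), which is the idea you mention only parenthetically at the end.

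The second gap is the gadget itself. Extracting the $i$-th base-$B$ block of $x$ is a floor/mod operation; each such extraction costs an existentially quantified auxiliary variable (compare the $\ex z$ per bit in the proof of Lemma~\ref{lem:Ppo_to_Pr}), and you need $n$ of them, whereas you have exactly two scalar variables in total, one of which is the free variable of $F$. A quantifier-free $\Phi(x,y)$ of polynomial length defines a union of polynomially many polygons in $\Z^2$ (Proposition~\ref{prop:W}); the set of valid subset-encodings is generically $2^n$ isolated points on a line and is not of that form. The argument that does work is a reduction from the $\coNP$-complete complement of \textsc{Simultaneous Incongruences}: given residues $r_i$ and moduli $m_i$, take $\Phi(x,y)=\bigvee_i\,(y=r_i+m_i x)$, so that $F=\bigcup_i\{y: y\equiv r_i\ (\mathrm{mod}\ m_i)\}$. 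A short GF for $F$, Hadamard-intersected with $[0,N)$ and evaluated at $t=1$ (Theorems~\ref{th:BPGF1} and~\ref{th:BPGF2}), yields $|F\cap[0,N)|$ and hence decides whether the progressions cover $[0,N)$, a $\coNP$-complete question. Note that each individual progression has a trivial short GF and each is the projection of a single polyhedron in $(x,y)$, so the hardness is precisely in assembling the union --- the point made in Remark~\ref{rem:Woods-NP}.
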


\begin{rem}\label{rem:Woods-NP}
By Theorem~\ref{th:W}, we still can find a short GF of length $\polyin(\phi(\Phi))$ for $\Phi(x,y)$.
So this result says that projecting a short GF is hard algorithmically.
This should be compared to our Theorem~\ref{th:main_1}, which says that projecting short GF is hard structurally.
Actually, by Proposition~\ref{prop:W}, we can also decompose $\Phi(x,y)$ into a union of polynomially many polygons $P_{i} \subseteq \rr^{2}$.
By Theorem~\ref{th:BW}, the projection of integer points in each $P_{i}$ on $x$ has a short GF, which can be found in polynomial time.
So taking union of these short GFs is again hard algorithmically. This should be compared to Theorem~\ref{th:main_2}.
\end{rem}

\bigskip

\section{Short GFs and the class $\Ppo$}  \label{sec:short-GF-complexity}

\subsection{Encoding languages in $\Ppo$ as short GFs}
For technical reasons regarding the convergence of GFs under numerical evaluation, we consider only GFs with support in $\N^{n}$ from this section onwards.
Theorem~\ref{th:BPGF2} still applies to short GFs supported on $\N^{n}$.
%%Also all GFs are assumed to have $0/1$ coefficients.

\begin{defi}\label{def:L_l}
For every language $\L \in \{0,1\}^{*}$, and every $\ellR > 0$, we denote by $\L_{\ellR}$ the segment
\begin{equation}\label{eq:L_ell}
\L_{\ellR} \coloneqq \bigl\{ \wt x \in \{0,1\}^{\ellR} : \wt x \in \L \}.
\end{equation}
For $\wt x \in \L_{\ellR}$, let $x$ be the corresponding integer with binary representation $\wt x$.
We will also use $\L_{\ellR}$ to denote the set of all such $x$ with $\wt x \in \L_{\ellR}$.
\end{defi}

\begin{lem}\label{lem:Ppo_to_Pr}
For every language $\L \in \Ppo$, and every $\ellR > 0$, the segment $\L_{\ellR}$ can be characterized in $\Pr$ as:
\begin{equation}\label{eq:Ppo_Pr}
\wt x \in \L_{\ellR} \quad \iff \quad x \in [0,2^{\ellR})  \,\land\, \bigl[ \ex y \in [0,2^{p}) \; \for \z \in [0,2^{q})^3 : \Phi_{\ellR}(x,y,\z) \bigr],
%%\wt x \in \L_{\ellR} \quad \iff \quad \ex y   \; \for \z  \; \Phi_{\ellR}(x,y,\z),
\end{equation}
%%where $x$ is the integer $($in binary$)$ corresponding to the string $\wt x$,
%%\footnote{We assume that every $\wt x \in \L$ is written from left to right and must end with a $1$.
%%This corresponds to the least to most significant digits in $x$, with the most significant digit always $1$.
%%}
where $\Phi_{\ellR}$ is a quantifier free $\Pr$ expression in $x,y \in \N$ and $\z \in \N^{3}$.
Moreover, we have $p,q,\phi(\Phi_{\ellR}) \le \polyin_{\L}(\ellR)$.\footnote{We denote by $\phi(\Phi_{\ellR})$ the total length of all symbols $\Phi_{\ellR}$, written in binary. The notation $\polyin_{\L}(\ellR)$ denotes a polynomial in $\ellR$, with the polynomial degree depending on the language $\L$.}
If in addition $\L \in \poly$, then there is an algorithm to compute $p,q$ and $\Phi_{\ellR}$ in time $\polyin_{\L}(\ellR)$.
\end{lem}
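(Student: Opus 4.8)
The plan is to start from the definition of $\Ppo$ via non-uniform polynomial-size circuits and translate the circuit into a Presburger formula by arithmetizing the circuit evaluation. Fix $\L \in \Ppo$. For each input length $\ellR$ there is a Boolean circuit $C_{\ellR}$ of size $s(\ellR) \le \polyin_{\L}(\ellR)$ deciding $\L_{\ellR}$. The standard move is to introduce one integer variable per wire of $C_{\ellR}$, constrain each such variable to lie in $\{0,1\}$ by the inequalities $0 \le w \le 1$, and encode each gate by a quantifier-free relation among three wire-values: e.g.\ an AND gate $c = a \land b$ becomes the conjunction of the linear inequalities $c \le a$, $c \le b$, $a + b \le c + 1$, an OR gate dually, and a NOT gate $c = 1-a$ by $c = 1-a$. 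Bundling all wire variables into a single vector $y$ (of length $\le s(\ellR)$, hence with $y \in [0,2^p)$ for $p \le \polyin_{\L}(\ellR)$), the assertion ``$C_{\ellR}(\wt x)$ accepts'' becomes $\ex y \in [0,2^p)\ \Psi_{\ellR}(x,y)$, where $\Psi_{\ellR}$ is the quantifier-free conjunction consisting of: (i) the bit-extraction constraints tying the bits of $x$ to the input wires, (ii) the gate constraints, and (iii) the constraint that the output wire equals $1$. Since a satisfying $y$ is \emph{unique} given $x$ (the wire values are determined by forward evaluation), the $\for \z$ block is not strictly needed to encode $\L$ — but it is present in the statement because the target normal form in \eqref{eq:Ppo_Pr} is a fixed $\ex\for$ shape, so one pads with a vacuous or convenient universal block (e.g.\ $\z$ over a singleton range, or used to verify bit-extraction of $x$ cleanly). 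The one genuine subtlety is extracting the binary digits of $x$ by Presburger atoms: ``the $j$-th bit of $x$ is $w_j$'' is expressed as $x = \sum_j 2^j w_j$ together with $0 \le w_j \le 1$, which is a single linear equation of length $\polyin(\ellR)$, so this is routine.

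First I would recall the circuit characterization of $\Ppo$ and fix the notation for $C_{\ellR}$, its gates and wires. Second, I would write down the arithmetization: the $\{0,1\}$-box constraints, the per-gate linear relations, the input-bit decomposition $x=\sum 2^j w_j$, and the output$=1$ clause; collect these into $\Phi_{\ellR}$ and verify that, for $x \in [0,2^\ellR)$, the formula $\ex y\, \Phi_{\ellR}(x,y,\z)$ holds iff $\wt x \in \L$. Third, I would check the length bounds: the number of gates is $\le \polyin_{\L}(\ellR)$, each gate contributes $O(1)$ atoms each of length $O(\ellR)$ (the coefficients $2^j$ have $\ellR$ bits), the input decomposition is one atom of length $O(\ellR^2)$ at worst, so $p, q, \phi(\Phi_{\ellR}) \le \polyin_{\L}(\ellR)$; the ranges $[0,2^p)$ and $[0,2^q)^3$ absorb all the wire and auxiliary variables. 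Fourth, for the uniform addendum: if $\L \in \poly$ then by the Cook–Levin-style uniform construction one can compute $C_{\ellR}$ (equivalently, directly compute $\Phi_{\ellR}$) from $\ellR$ in time $\polyin_{\L}(\ellR)$, since the circuit can be taken to simulate the polynomial-time Turing machine for $\L$ and its description is produced by a fixed polynomial-time procedure.

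The main obstacle is purely bookkeeping rather than conceptual: one must present the gate-arithmetization and the bit-extraction so that \emph{all} coefficients and the formula length stay polynomial in $\ellR$, and so that the final formula fits the exact quantifier pattern $\ex y \in [0,2^p)\ \for \z \in [0,2^q)^3$ demanded by \eqref{eq:Ppo_Pr} — in particular one must explain why a universal block of exactly three bounded variables suffices (it is an over-provision: the construction above uses none of the $\z$'s essentially, or uses them only to certify uniqueness of $y$, and the count ``$3$'' is chosen to match the shape used downstream). I expect the write-up to consist mostly of displaying the system of linear (in)equalities for a single gate and then remarking that conjoining $\polyin_{\L}(\ellR)$ such systems, plus the input decomposition and output clause, yields $\Phi_{\ellR}$ with the claimed bounds, and that in the $\poly$ case this system is generated uniformly in polynomial time.
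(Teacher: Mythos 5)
Your outline does not prove the lemma as stated, because the formula it produces has an unbounded number of variables. The statement requires $\Phi_{\ellR}$ to be quantifier-free in exactly five integers: $x,y\in\N$ (singletons) and $\z\in\N^{3}$; this bounded dimension is not cosmetic, since the downstream applications (Theorem~\ref{th:W}, Barvinok's theorem) only work in fixed dimension. Your arithmetization introduces one variable per wire of $C_{\ellR}$ (so $\polyin_{\L}(\ellR)$ of them) plus $\ellR$ bit-variables $w_j$ for the decomposition $x=\sum_j 2^j w_j$, all of which would have to sit inside the existential block. Bundling the wire values into a single integer $y\in[0,2^p)$ does not repair this: once the wires are bits of one integer $y$, your per-gate atoms $c\le a$, $c\le b$, $a+b\le c+1$ refer to individual \emph{bits} of $y$ and of $x$, and ``the $i$-th bit of $x$ equals $1$'' is not a quantifier-free Presburger atom in the single variable $x$.

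This is exactly what the $\for\z\in[0,2^q)^3$ block is for, so your reading of it as vacuous padding is backwards. The paper first reduces the circuit to a 3-CNF formula $F(\wt x,\wt y)$ in the bits of $x$ and $y$, and then extracts a bit with \emph{one} auxiliary quantified variable: $x_i$ holds iff $\floor{x/2^{i-1}}$ is odd, i.e., iff $\ex z$ with $x/2^{i-1}-1<2z+1\le x/2^{i-1}$ (after clearing denominators these are linear inequalities with coefficients of bit-length $O(\ellR)$). The negation of a 3-clause is a conjunction of three such literals, hence an $\ex\,\z\in[0,2^q)^3$ statement with one $z$ per literal; negating back, each clause becomes $\for\z\in[0,2^q)^3:\Psi_k(x,y,\z)$, and since $\for$ commutes with conjunction all clauses share the same three universal variables. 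That is why the count is exactly three and why the universal block cannot be dropped. The remaining parts of your outline (the circuit characterization of $\Ppo$, the length bookkeeping, and uniformity when $\L\in\poly$) are fine, but the bit-extraction mechanism is the heart of the lemma and is missing from your argument.
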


\begin{proof}
By definition of the class $\Ppo$, there is a Boolean circuit $C_{\ellR}$ such that:
\begin{equation*}
\L_{\ellR} = \{\wt x \in \{0,1\}^{\ellR} \,:\, C_{\ellR}(\wt x) = \textup{true}\}.
\end{equation*}
Here the circuit $C_{\ellR}$ has $\ellR$ input gates, and as many as $p \le \textup{poly}_{\L}(\ellR)$ non-input gates, each with in-degree at most $2$. We encode the values of the non-input gates as a Boolean string $\wt y \in \{0,1\}^{p}$. Let $\wt x = (x_{1},\dots,x_{\ellR})$ and $\wt y = (y_{1},\dots,y_{p})$.
By a standard reduction (see e.g.\ \cite{MM,Pap}), we can encode the computation of $C_{\ellR}$ by a Boolean formula $F$ in $3$-Conjunctive Normal Form. Explicitly, we have:
\begin{equation}\label{eq:3SATset}
\L_{\ellR} = \{\wt x \in \{0,1\}^{\ellR} \,:\, \ex \wt y \in \{0,1\}^{p} \; F(\wt x,\wt y) = \text{true}\},
\end{equation}
where
\begin{equation}\label{eq:3SAT}
F(\wt x, \wt y) \;=\; \bigwedge_{k} (a_{k} \lor b_{k} \lor c_{k}).
\end{equation}
Here each $a_{k},b_{k},c_{k}$ is a literal in the set $\{x_{i}, \lnot x_{i},\, y_{j}, \lnot y_{j} \,:\, 1 \le i \le \ellR,\, 1 \le j \le p\}$.
\smallskip

Let $x \in [0,2^{\ellR})$ and $y \in [0,2^{p})$ be the integers corresponding to $\wt x$ and $\wt y$, respectively.
Every literal $x_{i}$ corresponds to the $i$-th digit in $x$ being $1$, and $\lnot x_{i}$ corresponds that digit being $0$.\footnote{The least significant digit in $x$ corresponds to $x_{0}$ in $\wt x$.}
In other words, $x_{i}$ is true or false respectively when $\floor{x/2^{i-1}}$ is odd or even.
The same applies to $y_{i}$ and $y$.
Observe that $t = \floor{x/2^{i-1}}$ is the only integer that satisfies $x/2^{i-1} - 1 < t \le x/2^{i-1}$.
Let $q = \max(\ellR,p) \le \polyin(\ellR)$.
Each term $x_{i}$ or $\lnot x_{i}$ can be coded with an extra $\ex z$ quantifier as follows:

\begin{equation}\label{eq:bit}
\aligned
x_{i} &\iff \ex z \in [0,2^{q}) :
\begin{Bmatrix}
2z+1 &> &x / 2^{i-1} - 1\\
2z+1 &\le &x / 2^{i-1}
\end{Bmatrix},
\\
\lnot x_{i} &\iff \ex z \in [0,2^{q}) :
\begin{Bmatrix}
2z &> &x / 2^{i-1} - 1\\
2z &\le &x / 2^{i-1}
\end{Bmatrix}
%%\quad , \quad
.
\endaligned
\end{equation}
Here $\{\cdot\}$ denotes a system (conjunction) of inequalities.
Analogously, each $y_{j}$ or $\lnot y_{j}$ can be coded using $\ex z$.
Note that the two strict inequalities in~\eqref{eq:bit} can be sharpened by multiplying both sides with $2^{i-1}$ to make all coefficients integer, and add $1$ to the RHS.

Now we show how to code~\eqref{eq:3SAT} using $\for \z$ with $\z \in \N^{3}$.
For each clause $(a_{k} \lor b_{k} \lor c_{k})$, we consider its negation $(\lnot a_{k} \land \lnot b_{k} \land \lnot c_{k})$.
Each term $\lnot a_{k}, \lnot b_{k}, \lnot c_{k}$ is still one of $x_{i},\lnot x_{i}, y_{i}, \lnot y_{i}$.
By~\eqref{eq:bit}, we have
\begin{equation*}%%\label{eq:pos}
(\lnot a_{k} \land \lnot b_{k} \land \lnot c_{k}) \quad \iff \quad \ex \z \in [0,2^{q})^3 : \Phi_{k}(x,y,\z),
\end{equation*}
where $\z \in \N^3$, and $\Phi_{k}$ is a conjunction of $6$ inequalities.
Taking negation, we have:
\begin{equation*}%%\label{eq:neg}
\aligned
( a_{k} \lor  b_{k} \lor  c_{k}) \quad &\iff \quad \for \z \in [0,2^{q})^3 : \lnot\Phi_{k}(x,y,\z),
\\
&\iff \quad \for \z \in [0,2^{q})^3 : \Psi_{k} (x,y,\z),
\endaligned
\end{equation*}
where $\Psi_{k}$ is a disjunction of $6$ inequalities.
Taking conjunction over all $k$ in~\eqref{eq:3SAT}, we have:
\begin{equation}\label{eq:F_Pr}
F(\wt x,\wt y) \quad \iff \quad \for \z \in [0,2^{q})^3 :  \Phi_{\ellR}(x,y,\z),
\end{equation}
where
\begin{equation}\label{eq:Phi_def}
\Phi_{\ellR}(x,y,\z)  \;=\;  %%x \in [0,2^{\ellR}) \land y \in [0,2^{p}) \land \,
\bigwedge_{k} \Psi_{k} (x,y,\z).
\end{equation}
Substituting~\eqref{eq:F_Pr} into \eqref{eq:3SATset}, we have~\eqref{eq:Ppo_Pr}.
If we assume in addition that $\L \in \poly$, then the circuit $C_{\ellR}$ can be built from a Turing Machine in time $\polyin_{\L}(\ellR)$, so the expression $\Phi_{\ellR}$ can also be found in time $\polyin_{\L}(\ellR)$.
This completes the proof.
\end{proof}

\begin{defi}\label{def:complement}
%%For a GF $f(\t) = \sum \t^{\x}$ in $n$ variables, the complement $\lnot f$ is the unique GF $h(\t)$ with $\supp(h) = \N^{n} \cpl \supp(f)$.
Given $f = \GFof(S; \t)$, where $S$ is a subset of a finite box $B \subset \N^{n}$.
The \emph{finite complement} $B \cpl f$ is $\GFof(B \cpl S; \t)$.
\end{defi}

\begin{defi}\label{def:union_intersection}
  Given $f_{1} = \GFof(S_{1};\t),\, \dots,\, f_{k} = \GFof(S_{k};\t)$ with $S_{1},\dots,S_{k} \subseteq \N^{n}$, the \emph{intersection} $f_{1} \cap \dots \cap f_{k}$ is $\GFof(S_{1} \cap\dots\cap S_{k}; \t)$.
%% \begin{equation*}
%% g = \GFof(S_{1} \cap\dots\cap S_{k};t).
%% \end{equation*}
  The \emph{union} $f_{1} \cup \dots \cup f_{k}$ is $\GFof(S_{1} \cup\dots\cup S_{k}; \t)$.
%% \begin{equation*}
%% \supp(h) = \supp(f_{1}) \cup \dots \cup \supp(f_{k}).
%% \end{equation*}
\end{defi}

%% \begin{defi}
%% For a language $\L \subseteq \{0,1\}^{\ellR}$, denote by $\GFof(\L_{\ellR};t)$ the GF
%% \begin{equation*}
%% \GFof(\L_{\ellR};t) \; \coloneqq \; \sum_{x \in \L} t^{x}.
%% \end{equation*}
%% %%This is abbreviated to $\GFof(\L_{\ellR})$ when the variable $t$ is clear from the context.
%% \end{defi}

\begin{theo}\label{th:Ppo_to_GF}
For every language $\L \in \Ppo$ and $\ellR > 0$, there exist a finite box $B_{\ellR}$ and short GF $f_{\ellR}(t,u,\v) \in \GF_{5,5}$ with $\supp(f_{\ellR}) \subseteq B_{\ellR}$, so that
\begin{equation}\label{eq:Ppo_GF}
  %%\sum_{x \in \L_{\ellR}} t^{x}
 \GFof(\L_{\ellR};t) \, =  \, \spec_{x}(B_{\ellR} \cpl \proj_{x,y}(f_{\ellR}) )
\end{equation}
and $\,\phi(B_{\ellR}),\, \phi(f_{\ellR}) \le \polyin_{\L}(\ellR)$.\footnote{Here $\phi(B{\ellR})$ denotes the total bit length of all sides in $B_{\ellR}$, written in binary.}
Furthermore, there exist polynomially many short GFs $\.p_{\ellR, 1},\.\dots,\,p_{\ellR, k_{\ellR}} \in \GF_{2,s}$ of finite supports, each of length $\polyin_{\L}(\ellR)$, so that:
\begin{equation}\label{eq:f_union}
\proj_{x,y}(f_{\ellR}) \; = \; p_{\ellR, 1} \cup \dots \cup p_{\ellR,k_{\ellR}}.
\end{equation}
Here $\GF_{2,s}$ is some fixed class that does not depend on $\L$.
If we assume in addition that $\L \in \poly$, then there is also an algorithm to compute $B_{\ellR}, f_{\ellR}$ and each $p_{\ellR,i}$ in time $\polyin_{\L}(\ellR)$.
\end{theo}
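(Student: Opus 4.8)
The plan is to bootstrap Theorem~\ref{th:Ppo_to_GF} directly from Lemma~\ref{lem:Ppo_to_Pr}, translating each logical layer of the Presburger characterization~\eqref{eq:Ppo_Pr} into a short GF operation. Recall Lemma~\ref{lem:Ppo_to_Pr} says $\wt x \in \L_{\ellR}$ iff $x \in [0,2^\ellR)$ and $\ex y \in [0,2^p)\;\for \z \in [0,2^q)^3 : \Phi_\ellR(x,y,\z)$, with $\Phi_\ellR$ quantifier free of polynomial length. The innermost object is the quantifier free formula $\Phi_\ellR(x,y,\z)$ in five integer variables $x,y,z_1,z_2,z_3$, living in the finite box $B_\ellR = [0,2^\ellR)\times[0,2^p)\times[0,2^q)^3$. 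First I would apply Theorem~\ref{th:W} to $\Phi_\ellR$ (in dimension $5$) to obtain a short GF $f_\ellR \in \GF_{5,5}$ of length $\polyin_\L(\ellR)$ whose support is exactly $\{(x,y,\z)\in\Z^5 : \Phi_\ellR(x,y,\z)\}$, then intersect with the box $B_\ellR$ using Theorem~\ref{th:BPGF2} (the box is itself a short GF) to force $\supp(f_\ellR)\subseteq B_\ellR$; this keeps the index bounded (say by some absolute constant, which we may as well call $5$ after renormalizing, or one folds the box constraints into $\Phi_\ellR$ from the start).

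The next step handles the quantifier alternation $\ex y\,\for\z$ by the standard trick of converting a bounded universal quantifier over a box into a \emph{complement of a projection}. Concretely, $\for\z\in[0,2^q)^3:\Phi_\ellR(x,y,\z)$ is equivalent, \emph{within the box for $(x,y)$}, to the negation of $\ex\z\in[0,2^q)^3:\lnot\Phi_\ellR(x,y,\z)$. So I would: project $f_\ellR$ onto $(x,y,\z)$ — wait, rather, I first negate, i.e. replace $f_\ellR$ by the finite complement $B_\ellR\cpl f_\ellR$ (Definition~\ref{def:complement}), whose support encodes $\lnot\Phi_\ellR$ inside the box; project that onto the $(x,y)$ coordinates via $\proj_{x,y}$, getting the set of $(x,y)$ for which \emph{some} bad $\z$ exists; then take the finite complement once more against the $(x,y)$-box to recover $\{(x,y) : \for\z\,\Phi_\ellR\}$; and finally apply $\spec_x$ (legitimate because for each $x\in\L_\ellR$ there need only be recorded one witness $y$, and one checks the specialization hypothesis holds after the complement step — or one replaces $\proj$ by $\spec$ at the $y$-layer by first making the relation functional, e.g. taking the lexicographically least witness $y$, which is again a quantifier free condition absorbable into $\Phi_\ellR$). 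Matching this chain of operations to the precise right-hand side of~\eqref{eq:Ppo_GF}, namely $\spec_x\bigl(B_\ellR\cpl\proj_{x,y}(f_\ellR)\bigr)$, dictates which complement is folded into the definition of $f_\ellR$ itself; I would define $f_\ellR$ to be the short GF for $\lnot\Phi_\ellR$ intersected with the box, so that $\proj_{x,y}(f_\ellR)$ is the "$\ex$ bad $\z$" set, $B_\ellR\cpl\proj_{x,y}(f_\ellR)$ is the "$\for\z\,\Phi_\ellR$" set together with the $x\in[0,2^\ellR)$, $y\in[0,2^p)$ constraints, and $\spec_x$ extracts the first coordinate. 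The bound $\phi(B_\ellR),\phi(f_\ellR)\le\polyin_\L(\ellR)$ is immediate since $p,q,\phi(\Phi_\ellR)$ are all $\polyin_\L(\ellR)$ by Lemma~\ref{lem:Ppo_to_Pr}, Theorem~\ref{th:W} preserves polynomial length, and Theorem~\ref{th:BPGF2} runs in polynomial time hence outputs polynomial length.

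For the second assertion~\eqref{eq:f_union}, I would unwind the proof of Theorem~\ref{th:W}: Proposition~\ref{prop:W} decomposes the quantifier free formula $\lnot\Phi_\ellR$ (in the $(x,y,\z)$ variables, intersected with the box) into a disjoint union of $r\le\polyin_\L(\ellR)$ polyhedra $P_i\subseteq\R^5$. Barvinok's Theorem~\ref{th:Barvinok} gives a short GF for each $P_i\cap\Z^5$ of polynomial length. Now project each of these onto the $(x,y)$-plane: by Theorem~\ref{th:BW} the projection of the integer points of a polyhedron in bounded dimension is a short GF in some \emph{fixed} class $\GF_{2,s}$ with $s=s(5)$ a constant, computable in polynomial time. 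Call these $p_{\ellR,1},\dots,p_{\ellR,k_\ellR}$ with $k_\ellR=r\le\polyin_\L(\ellR)$; since the union of supports commutes with projection, $\proj_{x,y}(f_\ellR)=\bigcup_i p_{\ellR,i}$, which is~\eqref{eq:f_union}, and $\GF_{2,s}$ does not depend on $\L$. Finally, under the extra hypothesis $\L\in\poly$, Lemma~\ref{lem:Ppo_to_Pr} already provides an algorithm computing $p,q,\Phi_\ellR$ in time $\polyin_\L(\ellR)$, and every subsequent step — Proposition~\ref{prop:W}, theorems~\ref{th:Barvinok}, \ref{th:BW}, \ref{th:BPGF2}, \ref{th:BPGF1} — is a polynomial time algorithm, so $B_\ellR$, $f_\ellR$, and each $p_{\ellR,i}$ are all computable in time $\polyin_\L(\ellR)$.

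The step I expect to be the main obstacle is the bookkeeping around the specialization $\spec_x$ versus plain projection $\proj_x$: Definition~\ref{def:proj_and_spec} only licenses writing $\spec_x(g)$ when the relation is functional in $x$, so I must verify that after taking $B_\ellR\cpl\proj_{x,y}(f_\ellR)$ the surviving set of pairs $(x,y)$ is single-valued in $x$ — which is \emph{not} automatic from the circuit encoding, since many gate-value strings $\wt y$ need not be unique. The clean fix is to strengthen $\Phi_\ellR$ at the outset so that $\wt y$ is forced to be the \emph{honest} evaluation of the circuit on input $\wt x$ (the $3$-CNF from the Cook–Levin reduction already essentially does this: a satisfying $\wt y$ is uniquely determined by $\wt x$ since gate values propagate deterministically), making the relation functional and $\spec_x$ legitimate. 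I would state this carefully as part of the construction, citing the determinism of circuit evaluation, so that the use of $\spec_x$ in~\eqref{eq:Ppo_GF} is justified; everything else is a routine composition of the black-box theorems from Section~\ref{sec:operations}.
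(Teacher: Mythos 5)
Your proposal is correct and follows essentially the same route as the paper: define $f_{\ellR}$ as the short GF of $\lnot\Phi_{\ellR}$ inside the box via Theorem~\ref{th:W}, realize the $\ex y\,\for\z$ alternation as a projection followed by a finite complement, justify $\spec_x$ by the determinism of gate values in the circuit encoding, and obtain~\eqref{eq:f_union} from Proposition~\ref{prop:W} plus Theorem~\ref{th:BW}. The one subtlety you flagged --- functionality of the $(x,y)$ relation needed for $\spec_x$ --- is resolved exactly as in the paper's proof.
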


\begin{proof}
For the notations $\proj, \spec, \cup$ and $\cpl\ts$, we refer back to definitions~\ref{def:proj_and_spec},~\ref{def:complement} and~\ref{def:union_intersection}.
By the previous lemma, there is a $\Pr$ expression $\Phi_{\ellR}$ satisfying~\eqref{eq:Ppo_Pr}.
%%The expression $\Phi_{\ellR}$ was explicitly given in~\eqref{eq:Phi_def}.
First, define
\begin{equation*}
\aligned
B_{\ellR} &= \{(x,y) : x \in [0,2^{\ellR}),\, y \in [0,2^{p}) \}, \\
D_{\ellR} &= \{(x,y,\z) : x \in [0,2^{\ellR}),\, y \in [0,2^{p}),\, \z \in [0,2^{q})^3\},
\endaligned
\end{equation*}
where $\ellR,p$ and $q$ are from~\eqref{eq:Ppo_Pr}.
Define:
\begin{equation}\label{eq:Pr_to_GF}
f_{\ellR}(t,u,\v) \;=\; \sum_{\substack{(x,y,\z) \in D_{\ellR} \\ \lnot\Phi_{\ellR}(x,y,\z) \;}} t^{x} \; u^{y} \; \v^{\z}.
\end{equation}
Recall that $\Phi_{\ellR}$ is a quantifier free $\Pr$ expression with length $\textup{poly}_{\L}(\ellR)$.
%% So is the expression
%% \begin{equation*}
%% \lnot \Phi_{\ellR}(x,y,\z) \;\land\; (x,y,\z) \in B_{\ellR}.
%% \end{equation*}
Applying Theorem~\ref{th:W} to $\lnot \Phi_{\ellR}$, we can write $f_{\ellR}$  as a short GF in $\GF_{5,5}$  of finite support, which has length  $\phi(f_{\ellR}) \le \polyin(\phi(\Phi_{\ellR})) \le \polyin_{\L}(\ellR)$.
For the rest of the proof, we always assume $(x,y,\z) \in D_{\ellR}$.
We will simply write $\ex \z$ instead of $\ex \z \in [0,2^{q})^3$.
Projecting $f_{\ellR}$ on $(x,y)$, we have:
\begin{equation}\label{eq:ex_GF}
\proj_{x,y}(f_{\ellR}) \;=\; \sum_{(x,y) \,:\, \ex\z \, \lnot \Phi_{\ellR}(x,y,\z)} t^{x} u^{y}.
\end{equation}
Taking the complement of $\proj_{x,y}(f_{\ellR})$, which lies within the box $B_{\ellR}$, we have:
\begin{equation}\label{eq:sum_xy}
B_{\ellR} \cpl \proj_{x,y}(f_{\ellR})  \;=\; \sum_{(x,y) \,:\, \for \z \, \Phi_{\ellR}(x,y,\z)} t^{x} \; u^{y}.
\end{equation}
Recall that in the proof of Lemma~\ref{lem:Ppo_to_Pr}, the variable $y$ describes the values of non-input gates in the circuit $C_{\ellR}$, with input gates coming from $x$. Since the values of non-input gates are uniquely determined by the input gates, for every $x$ that satisfies $C_{\ellR}$ we have a unique $y$.
Substituting $u \gets 1$, the RHS in \eqref{eq:sum_xy} becomes $\GFof(\L_{\ellR};t)$.
%%$\sum_{x \in \L_{\ellR}} t^{x}$.
%% \begin{equation*}
%% \aligned
%% \spec_{x}(B_{\ellR} \cpl\proj_{x,y}(f_{\ellR})) \; = \; \sum_{x \,:\, \ex y \for \z \, \Phi_{\ellR}(x,y,\z)} t^{x} \;=\; \sum_{x \in \L_{\ellR}} t^{x}.
%% \endaligned
%% \end{equation}
We obtain~\eqref{eq:Ppo_GF}.

We proceed to show~\eqref{eq:f_union}.
Since $\lnot \Phi_{\ellR}$ is quantifier free with $5$ variables, we can apply Proposition~\ref{prop:W} on it and get:
\begin{equation*}
\lnot \Phi_{\ellR}(x,y,\z) \quad \iff \quad \bigvee_{i=1}^{k_{\ellR}} (x,y,\z) \in P_{\ellR,i} \cap \N^{5},
\end{equation*}
where $P_{\ellR,1},\.\dots,\,P_{\ellR,k_{\ellR}} \subseteq \R^{5}$ are disjoint polytopes (in the box $D_{\ellR}$) and $k_{\ellR} \le \polyin(\phi(\Phi_{\ellR})) \le \polyin_{\L}(\ellR)$.
Each polytope $P_{\ellR,i}$ also satisfies $\phi(P_{\ellR,i}) \le \polyin(\ellR)$.
Therefore:
\begin{equation}\label{eq:decomp_proj}
\ex \z  \; \lnot \Phi_{\ellR}(x,y,\z) \quad \iff \quad \bigvee_{i=1}^{k_{\ellR}} \ex \z \; \bigl[ (x,y,\z) \in P_{\ellR,i} \cap \N^{5} \bigr].
\end{equation}
Combined with~\eqref{eq:ex_GF}, we see that $(x,y) \in \supp(\proj_{x,y}(f_{\ellR}))$ if and only if it lies in the projection of some $P_{\ellR,i} \cap \N^{5}$.
By Theorem~\ref{th:BW}, for each $i$, we can find a short GF $p_{\ellR,i} \in \GF_{2,s}$ for the projection of $P_{\ellR,i} \cap \N^{5}$.
In other words, we have $p_{\ellR,i} \in \GF_{2,s}$ that satisfies:
\begin{equation*}
\supp(p_{\ellR,i}) = \{(x,y) : \ex \z \;  (x,y,\z) \in P_{\ellR,i} \cap \N^{5} \}.
\end{equation*}
Here $s$ is an absolute constant because each $P_{\ellR,i}$ has (fixed) dimension $5$.
We also have $\phi(p_{\ellR,i}) \le \polyin(\phi(P_{\ellR,i})) \le \polyin(\ellR)$.
The union of all short GFs $p_{\ellR,i}$ contains exactly all $(x,y)$ satisfying~\eqref{eq:decomp_proj}.
From~\eqref{eq:ex_GF} and~\eqref{eq:decomp_proj}, we have:
\begin{equation*}
\proj_{x,y}(f_{\ellR}) = p_{\ellR,1} \cup \dots \cup p_{\ellR,k_{\ellR}}.
\end{equation*}
This proves~\eqref{eq:f_union} and completes the proof.
\end{proof}

\begin{eg}\label{eg:squares}
Since $\textsc{SQUARES}$  and $\textsc{PRIMES}$ are both in $\poly$, we can represent all squares or primes up to $2^{\ellR}$ in the form~\eqref{eq:Ppo_GF}, with $f_{\ellR}$ and $B_{\ellR}$ computable in time $\polyin(\ellR)$.
%%\footnote{In fact, for $\textsc{PRIMES}$ this cane be done more economically by using the Miller--Rabin primality testing, which gives $\textsc{PRIMES}\in \BPP \subseteq \Ppo$ as in Theorem~\ref{th:Ppo_to_GF}}
\end{eg}

\begin{rem}
Even though \ts $\spec_{x}(B_{\ellR} \cpl \proj_{x,y}(f_{\ellR}))$ \ts may seem complicated, the specialization and complement are ``inexpensive operations'', which can be performed in polynomial time by theorems~\ref{th:BPGF1} and~\ref{th:BPGF2}. The main complexity resides in taking the projection of~$f$.

%% If the conditions of finite supports in Theorem~\ref{th:Ppo_to_GF} is relaxed, then we can take $B_{\ellR}$ to be the short GF of the whole $\N^{2}$, and $f_{\ellR}$ as in~\eqref{eq:Pr_to_GF} but without the condition $(x,y,\z) \in B_{\ellR}$.
%% In this case $B_{\ellR} \cpl f_{\ellR} = \lnot f_{\ellR}$ (see Definition~\ref{def:complement}).
%% This results in a simpler looking relation:
%% \begin{equation*}
%% \sum_{x\in\L_{\ellR}} t^{x} = \spec_{x}(\lnot\proj_{x,y}(f_{\ellR})).
%% \end{equation*}
\end{rem}

\begin{rem}\label{rem:UP}
  The same representation~\eqref{eq:Ppo_GF} applies to every language $\L$ in the complexity class $\unique\Ppo$. Such a language is characterized as follows.
  For every $\ellR$, there is a \emph{non-deterministic} polynomial-time Turing machine that accepts only $x \in \L_{\ellR}$, each with a unique accepting path.
  Given $\L \in \unique\Ppo$, we can obtain~\eqref{eq:Ppo_GF} by the same argument as above. In fact,~\eqref{eq:Ppo_GF} is an equivalent characterization of the class $\unique\Ppo$.
 Indeed, assume $\L_{\ellR}$ can be represented as~\eqref{eq:Ppo_GF}. Given $f_{\ellR}$, for any $x \in \L_{\ellR}$ there should be a unique certificate $y$ such that $(x,y) \in B_{\ellR} \cpl \proj_{x,y}(f_{\ellR})$, which is checkable in polynomial time by Proposition~\ref{prop:supp_proj_is_P}.
\end{rem}

\subsection{Compressing short GFs of finite supports}
We describe a technical tool which will be useful later.
This section can be skipped at first reading.

\begin{defi}\label{def:tau_map}
Consider $N = 2^{\ellR}$ and a vector $\x = (x_{1}, \dots, x_{d}) \in \N^{n}$ with $x_{i} \in [0,N)$ for all $1 \le i \le d$.
We define the $\tau_{N}$ map on $\x$ as:
\begin{equation*}
\tau_{N}(\x) \;=\; x_{1} + N x_{2} + \dots + N^{n-1} x_{d} \; \in [0,N^{n}).
\end{equation*}
For an array of vectors $\ov \x = (\x_{1},\dots,\x_{n})$ with $\x_{i} \in [0,N)^{n_{i}}$, we define:
\begin{equation*}
\tau_{N}(\ov \x) \;=\; (\tau_{N}(\x_{1}),\dots,\tau_{N}(\x_{k})) \; \in \; [0,N^{n_{1}}) \times \dots \times [0,N^{n_{k}}).
\end{equation*}
Finally, for a set $S \subseteq  [0,N)^{n_{1}} \times \dots \times [0,N)^{n_{k}}$, we define $\tau_{N}(S) = \{\tau_{N}(\ov \x) : \ov \x \in S\}$.
\end{defi}

The following technical tool allows us to reduce the number of variables in a short GF of finite support.

\begin{lem}\label{lem:compress}
Fix $k,s$ and $n_{1},\dots,n_{k} \in \N$.
Let $n = n_{1} + \ldots + n_{k}$.
\smallskip

\nin\textup{a) \textbf{Compressing}:} Given a short GF $g(\ov \t) = \sum \t_{1}^{\x_{1}} \dots \t_{k}^{\x_{k}}$ of finite support in the class $\GF_{n,s}\.$, there exist an $N = 2^{\ellR}$ with $\,\supp(g) \subseteq  [0,N)^{n_{1}} \times \dots \times [0,N)^{n_{k}}$ and a short GF $f(\u) = \sum u_{1}^{z_{1}} \dots u_{k}^{z_{k}}$ in the class $\GF_{k,s}$ so that
\begin{equation}\label{eq:compress}
\supp(f) = \tau_{N}(\supp(g)) \;\subseteq\; [0,N^{n_{1}}) \times \dots \times [0,N^{n_{k}}).
\end{equation}
Both $f$ and $N$ can be computed in time $\polyin(\phi(g))$ with $\phi(f), \log N \le \polyin(\phi(g))$.
\smallskip

\nin\textup{b) \textbf{Decompressing}:} Conversely, given $f(\u) = \sum u_{1}^{z_{1}} \dots u_{k}^{z_{k}} \in \GF_{k,s}$ and $N = 2^{\ellR}$ such that
\begin{equation*}
\supp(f) \;\subseteq\; [0,N^{n_{1}}) \times \dots \times [0,N^{n_{k}}),
\end{equation*}
there exists $g(\ov \t) = \sum \t_{1}^{\x_{1}} \dots \t_{k}^{\x_{k}} \in \GF_{n, n+s}$ with $\supp(g) \subseteq [0,N)^{n_{1}} \times \dots \times [0,N)^{n_{k}}$ which satisfies~\eqref{eq:compress}.
The short GF $g$ can be computed in time $\polyin(\phi(f)+\log N)$.
\end{lem}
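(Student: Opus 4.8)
The plan is to realize both $f$ and $g$ by a short chain of the ``cheap'' operations of Theorems~\ref{th:BPGF1}, \ref{th:BPGF2} and~\ref{th:Barvinok}, where the only non-routine points are to verify that the monomials produced never collide (so that the resulting short power series is genuinely a GF) and to keep track of the index.

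\textit{Compressing.} First I would use Theorem~\ref{th:BPGF1}(1) to compute the $\ell_\infty$-norm $N_0$ of $\supp(g)$ in time $\polyin(\phi(g))$ and set $N=2^{\ellR}$ with $\ellR=\lceil\log_2(N_0+1)\rceil$, so that $\supp(g)\subseteq[0,N)^{n_1}\times\dots\times[0,N)^{n_k}$ and $\log N\le\polyin(\phi(g))$. The key observation is that $\tau_N$ is induced by the \emph{monomial substitution} $t_{i,j}\gets u_i^{N^{j-1}}$: under it $\prod_{i,j}t_{i,j}^{x_{i,j}}$ becomes $\prod_i u_i^{\tau_N(\x_i)}=\u^{\tau_N(\ov\x)}$. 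So I would set $f(\u):=g(\t(\u))$ for this substitution; by Theorem~\ref{th:BPGF1}(3) it is computable, lies in $\GFstar_{k,s}$ (substitution does not change the number of denominator factors), and has $\phi(f)\le\polyin(\phi(g))$ since the substituted exponents $N^{j-1}$ have bit length $O(n\log N)$. Because $\tau_N$ is injective on $[0,N)^{n_1}\times\dots\times[0,N)^{n_k}$, distinct points of $\supp(g)$ yield distinct monomials, so no coefficients get summed: $f$ is a genuine GF with $\supp(f)=\tau_N(\supp(g))$, i.e.\ $f\in\GF_{k,s}$.

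\textit{Decompressing.} Here I would run the substitution of part (a) ``from the side.'' Write the box $[0,N)^{n}$ as a short GF $B(\ov\t)\in\GF_{n,n}$ (e.g.\ as the product over coordinates of $\frac{1}{1-t_{i,j}}-\frac{t_{i,j}^{N}}{1-t_{i,j}}$, or via Theorem~\ref{th:Barvinok}), and apply the substitution $t_{i,j}\gets t_{i,j}u_i^{N^{j-1}}$: by Theorem~\ref{th:BPGF1}(3) this yields a short GF $E(\ov\t,\u)\in\GF_{n+k,n}$ — again genuinely a GF, by injectivity of $\tau_N$ on the box — of length $\polyin(\log N)$, whose support is the graph $\{(\ov\x,\tau_N(\ov\x)):\ov\x\in[0,N)^{n_1}\times\dots\times[0,N)^{n_k}\}$. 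Next, intersect $E$ with $f$ in the $\u$-variables; the result $h(\ov\t,\u)$ has $\supp(h)=\{(\ov\x,\tau_N(\ov\x)):\tau_N(\ov\x)\in\supp(f)\}$. Finally, each $\ov\x$ admits at most one $\z$ with $(\ov\x,\z)\in\supp(h)$, so the $\ov\t$-projection of $h$ is a \emph{specialization}: $g(\ov\t):=\spec_{\ov\t}(h)=h(\ov\t,1)$, computed via Theorem~\ref{th:BPGF1}(3). Its support is $\tau_N^{-1}(\supp(f))$, which lies in the box and, since $\tau_N$ maps the box bijectively onto $[0,N^{n_1})\times\dots\times[0,N^{n_k})\supseteq\supp(f)$, satisfies~\eqref{eq:compress}. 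All steps run in time $\polyin(\phi(f)+\log N)$.

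\textit{Main obstacle.} The delicate point is the index bound $n+s$ for $g$ in part (b). One wants the $\u$-only (``parametrized'') Hadamard product $E\star_{\u}f$ — intersecting in the $\u$-coordinates while carrying the $\ov\t$-exponents of $E$ along as parameters — so that its index is at most $\ind{E}+\ind{f}=n+s$, a bound preserved by the subsequent substitution $\u\gets1$. This requires that the parametrized form of Hadamard multiplication be available from~\cite{BP} (cf.\ Remark~\ref{rem:Hadamard}), or a short direct argument for it. If one only uses the plain Hadamard product of Theorem~\ref{th:BPGF2}, applied to $E$ and the lift $\bigl(\prod_{i,j}\tfrac{1}{1-t_{i,j}}\bigr)f(\u)$, the index degrades to $2n+s$, and additional bookkeeping is needed to bring it down to the stated value. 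Everything else — the length and running-time estimates — follows routinely since $k,s,n_1,\dots,n_k$ are fixed and $\log N\le\polyin(\phi(g))$ (resp.\ $\log N$ is given as part of the input in the decompressing direction).
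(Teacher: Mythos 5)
Part~(a) of your proposal is correct and is the paper's own argument: compute the norm via Theorem~\ref{th:BPGF1}(1), round $N$ up to a power of $2$, and realize $\tau_N$ by the monomial substitution $t_{i,j}\gets u_i^{N^{j-1}}$ via Theorem~\ref{th:BPGF1}(3); your extra remark that injectivity of $\tau_N$ on the box prevents coefficient collisions (so the resulting short power series is a genuine GF) is a point the paper leaves implicit.

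Part~(b), however, has a genuine gap, and you have located it yourself: the entire technical content of the decompression step is the ``parametrized Hadamard product'' that you assume but do not prove. It is not available as a black box from Theorem~\ref{th:BPGF2} or Remark~\ref{rem:Hadamard} (the remark only extends the plain Hadamard product from GFs to power series), and as you note, the plain Hadamard product applied to $E$ and the lift $\bigl(\prod_{i,j}\tfrac{1}{1-t_{i,j}}\bigr)f(\u)$ only yields index $2n+s$, not the claimed $n+s$; no ``bookkeeping'' is offered to repair this. The paper closes exactly this gap by a direct construction. It defines the $\tau$-Hadamard product $A(\t)\tauHad B(t)\coloneqq\sum\al_{\x}\be_{\tau(\x)}\t^{\x}$ and writes $g=a\star_{\tau_N}f$ with $a(\t)=\prod_i\frac{1-t_i^N}{1-t_i}\in\GF_{n,n}$ the box GF (no auxiliary graph GF $E$ in $n+k$ variables is needed). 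By bilinearity one reduces to single terms $A=\t^{\a}/\prod_{i=1}^{p}(1-\t^{\b_i})$ and $B=t^{c}/\prod_{j=1}^{q}(1-t^{d_j})$; one then forms the polyhedron
\[
P=\bigl\{(\zzeta,\xxi)\ge 0\ :\ \tau(\a+\zeta_1\b_1+\dots+\zeta_p\b_p)=c+\xi_1d_1+\dots+\xi_qd_q\bigr\}\subset\R^{p+q},
\]
applies Theorem~\ref{th:Barvinok} to get $D(\w,\v)\in\GF_{p+q,\ts p+q}$ for $P\cap\Z^{p+q}$, and substitutes $w_i\gets\t^{\b_i}$, $v_j\gets 1$, so that $\t^{\a}D(\t^{\b_1},\dots,\t^{\b_p},1,\dots,1)=A\tauHad B$ has index at most $p+q$. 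Summing over terms gives index $n+s$, i.e.\ $g\in\GF_{n,n+s}$. So your part~(b) is a plausible outline whose one nonroutine step is precisely the lemma's crux; without an argument of the above type (or an explicit citation proving the parametrized Hadamard product with additive index), the stated index bound is not established.
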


Proof for the lemma is technical and is postponed until Section~\ref{s:lemma-proof}.
We note that the compression map $\tau_{N}$ in Definition~\ref{def:tau_map} is similar
to that used in the polynomial identity testing algorithm of Klivans and Spielman~\cite{KS}.
Using Lemma~\ref{lem:compress}, we can reduce the number of variables of $f_{\ellR}$
in~\eqref{eq:Ppo_GF} down to~$3$.

\begin{cor}\label{cor:Ppo_to_GF_3}
For every language $\L \in \Ppo$ and $\ellR > 0$, there exist a finite box $B_{\ellR}$ and short GF $f_{\ellR}(t,u,v) \in \GF_{3,5}$ with $\supp(f_{\ellR}) \subseteq B_{\ellR}$, so that~\eqref{eq:Ppo_GF} holds.
%% \begin{equation*}%%\label{eq:Ppo_GF}
%% \sum_{x \in \L_{\ellR}} t^{x} \, =  \, \spec_{x}(B_{\ellR} \cpl \proj_{x,y}(f_{\ellR}) )
%% \end{equation*}
%%and $\,\phi(B_{\ellR}), \phi(f_{\ellR}) = \polyin_{\L}(\ellR)$.
The rest is identical to Theorem~\ref{th:Ppo_to_GF}.
\end{cor}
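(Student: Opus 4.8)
The plan is to apply Lemma~\ref{lem:compress} to the short GF $f_{\ellR}(t,u,\v)\in\GF_{5,5}$ produced by Theorem~\ref{th:Ppo_to_GF}, in order to collapse the three $\v$-variables into a single variable $v$ while leaving $t$ and $u$ untouched, thereby landing in $\GF_{3,5}$. Concretely, I would group the five variables into blocks $(x)$, $(y)$, and $\z=(z_1,z_2,z_3)$, so in the notation of Lemma~\ref{lem:compress} we take $k=3$ with $n_1=n_2=1$ and $n_3=3$, hence $n=5$. First I would observe that $\supp(f_{\ellR})\subseteq B_{\ellR}$ already sits inside a box, and by Theorem~\ref{th:BPGF1}(1) we can compute in polynomial time an $N=2^{\ellR'}$ with $\ellR'\le\polyin_\L(\ellR)$ such that $\supp(f_{\ellR})\subseteq[0,N)^5$; enlarging $\ellR$ if necessary we may assume $N=2^{\ellR}$ and in particular that the original box $B_{\ellR}$ in the $(x,y)$ coordinates is $[0,N)\times[0,N)$.

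The main step is to invoke Lemma~\ref{lem:compress}(a) with this blocking: it yields $N$ and a short GF $\wt f_{\ellR}(t,u,v)\in\GF_{3,5}$ with
\[
\supp(\wt f_{\ellR}) \;=\; \tau_N(\supp(f_{\ellR})) \;\subseteq\; [0,N)\times[0,N)\times[0,N^3),
\]
computable in time $\polyin(\phi(f_{\ellR}))\le\polyin_\L(\ellR)$, with $\phi(\wt f_{\ellR})\le\polyin_\L(\ellR)$. Since on the first two blocks $\tau_N$ is the identity map, $\wt f_{\ellR}$ agrees with $f_{\ellR}$ in the $(x,y)$ coordinates and only re-encodes the $\z$-block as $w=z_1+Nz_2+N^2z_3$. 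The key point to verify is that this re-encoding does not disturb the identity~\eqref{eq:Ppo_GF}: we have $\proj_{x,y}(\wt f_{\ellR})=\proj_{x,y}(f_{\ellR})$ because projecting away $v$ is the same as first projecting away $\z$ (the fibers over each $(x,y)$ are mapped bijectively by $\tau_N$, so a fiber is nonempty for $\wt f_{\ellR}$ exactly when it is nonempty for $f_{\ellR}$). Taking the new box to be $B_{\ellR}=[0,2^{\ellR})\times[0,2^{p})$ unchanged in the $(x,y)$ coordinates, the chain
\[
\GFof(\L_{\ellR};t)\;=\;\spec_x\bigl(B_{\ellR}\cpl\proj_{x,y}(\wt f_{\ellR})\bigr)
\]
follows verbatim from Theorem~\ref{th:Ppo_to_GF}, and all the statements about $p_{\ellR,i}$, lengths, and the $\L\in\poly$ algorithmic version carry over unchanged since they only concern the $(x,y)$-side.

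I do not expect a serious obstacle here: the corollary is essentially a bookkeeping consequence of Lemma~\ref{lem:compress}, and the only thing requiring a little care is making sure the blocking is chosen so that $t$ and $u$ survive as honest single variables (i.e.\ not lumping $x$ or $y$ into the compressed block), and checking that projection commutes with the compression map on the retained coordinates. The mild subtlety is the index bookkeeping: compressing from $5$ variables to $3$ keeps the index at $s=5$ by Lemma~\ref{lem:compress}(a) (which outputs $\GF_{k,s}$ with the \emph{same} $s$), so the target class is indeed $\GF_{3,5}$ and not something larger. Everything else—the box lengths, the $\polyin_\L(\ellR)$ bounds, the union decomposition~\eqref{eq:f_union}, and the computability under $\L\in\poly$—is inherited directly from Theorem~\ref{th:Ppo_to_GF} together with the polynomial-time guarantees in Lemma~\ref{lem:compress}.
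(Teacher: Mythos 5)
Your proposal is correct and matches the paper's proof essentially verbatim: both apply Lemma~\ref{lem:compress}(a) with the blocking that keeps $x$ and $y$ as singleton blocks and compresses $\z$ into one variable, and both rest on the observation that $\proj_{x,y}$ is unaffected because $\tau_N$ acts fiberwise bijectively over each $(x,y)$. Your extra bookkeeping about the norm $N$ and the index staying at $s=5$ is consistent with the lemma as stated and adds nothing problematic.
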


\begin{proof}
We have~\eqref{eq:Ppo_GF} with $f_{\ellR}(t,u,\v) = \sum t^{x} u^{y} \v^{\z} \in \GF_{5,5}$ a short GF of finite support in five variables $(t,u,v_{1},v_{2},v_{3})$.
Using part a) of Lemma~\ref{lem:compress}, we can compress $\z$ into  a single-variable $w$, leaving both $x$ and $y$ unchanged.
In other words, $t^{x} u^{y} \v^{\z}$ becomes $t^{x} u^{y} v^{w}$.
Note that $\proj_{x,y}$ is not affected by compression.
This gives us a short GF $\wt f_{\ellR} \in \GF_{3,5}$ with
\begin{equation*}
\proj_{x,y}(\wt f_{\ellR}) \; = \; \proj_{x,y}(f_{\ellR}) \quad \text{and} \quad \phi(\wt f_{\ellR}) \le \polyin(\phi(f_{\ellR})) \le \polyin(\ellR).
\end{equation*}
So we can substitute $\wt f_{\ellR}$ for $f_{\ellR}$ in~\eqref{eq:Ppo_GF}.
\end{proof}

\bigskip

\section{Short GFs and the non-uniform polynomial hierarchy}  \label{sec:nu-hier}

The non-uniform polynomial hierarchy $\PH\po$ starts with $\Ppo = \SigmaP_{0}\po = \PiP_{0}\po$ at the $0$th level.
%% The higher levels defined as:
%% \begin{equation*}
%% \SigmaP_{k+1}\po = \NP^{\SigmaP_{k}\po}  \quad \text{and} \quad \PiP_{k+1}\po = \coNP^{\SigmaP_{k}\po}.
%% \end{equation*}
For $k>0$, a language $\L$ is in $\SigmaP_{k}\po$ if for every $\ellR > 0$,
there is a circuit $C_{\ellR}$ of size $\polyin_{\L}(\ellR)$ so that for every
string $\wt x$ of length $\ellR$ we have:
\begin{equation*} 
\wt x \in \L_{\ellR} \quad \iff \quad \ex \wt y_{1} \; \for \wt y_{2} \dots \; Q_{k} \wt y_{k} : C_{\ellR}(x,y_{1},\dots,y_{k}) = 1.
\end{equation*}
Here $Q_{1},\dots,Q_{k}$ are $k$ alternating quantifiers with $Q_{1}=\ex$, and $\wt y_{1},\dots, \wt y_{k}$ are binary strings of length polynomial in~$\ellR$.
For $\PiP_{k}\po$ the alternating quantifiers are reversed ($Q_{1}=\for$).
We have a the following analogue to Lemma~\ref{lem:Ppo_to_Pr} for each level in $\PH\po$:

\begin{lem}\label{lem:PH_to_Pr}
For every language $\L \in \SigmaP_{k}\po$ and $\ellR > 0$, there exists a quantifier free $\Pr$ expression in $k+4$ variables $x \in \N$, $\y \in \N^{k}$, $\z \in \N^{3}$, so that $\wt x \in \L_{\ellR}$ if and only if:
\begin{equation}\label{eq:PH_Pr}
x \in [0,2^{\ellR}) \,\land\, \Big[ Q_{1} y_{1} \in [0,2^{p_{1}}) \, \dots \, Q_{k} y_{k} \in [0,2^{p_{k}}) \; Q_{k+1} \z \in [0,2^{q})^3 : \Phi_{\ellR}(x,\y,\z) \Big].
\end{equation}
Here $Q_{1}, \dots, Q_{k+1}$ are $k+1$ alternating quantifiers with $Q_{1} = \ex$.
Moreover, we have $p_{1},\dots,p_{k},q,\phi(\Phi_{\ellR}) \le \polyin_{\L}(\ellR)$.
For the case $\L \in \PiP_{k}\po$, the quantifiers $Q_{i}$ are reversed.
\end{lem}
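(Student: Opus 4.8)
The plan is to mimic the proof of Lemma~\ref{lem:Ppo_to_Pr}, now carrying the alternating quantifier prefix through and handling the circuit evaluation only at the innermost level. Fix $\L\in\SigmaP_k\po$ and $\ellR>0$, and let $C_{\ellR}$ be the associated circuit of size $\polyin_{\L}(\ellR)$, whose gates have in-degree $\le 2$, so that $\wt x\in\L_{\ellR}$ iff $x\in[0,2^{\ellR})$ and $\ex\wt y_1\,\for\wt y_2\cdots Q_k\wt y_k : C_{\ellR}(\wt x,\wt y_1,\dots,\wt y_k)=1$, where each $\wt y_i$ is a binary string of length $p_i\le\polyin_{\L}(\ellR)$ and $Q_k=\ex$ exactly when $k$ is odd. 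First I introduce Boolean variables $\wt w$ (of length $\polyin_{\L}(\ellR)$) for the non-input gates of $C_{\ellR}$, let $\mathrm{cons}(\wt x,\wt y,\wt w)$ be the Tseitin encoding --- a conjunction of polynomially many clauses of width $\le3$ --- asserting that $\wt w$ is the gate-value vector consistent with the inputs, and let $w_{\mathrm{out}}$ be the literal reading off the output gate; crucially, since $C_{\ellR}$ is deterministic, for every fixed input there is a \emph{unique} consistent $\wt w$.

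The one new point, relative to Lemma~\ref{lem:Ppo_to_Pr}, is that $\wt w$ must be folded into the last quantifier block $Q_k\wt y_k$ without spawning an extra alternation. If $k$ is odd ($Q_k=\ex$) I write $C_{\ellR}=1\iff\ex\wt w\,(\mathrm{cons}\land w_{\mathrm{out}})$, the conjunction being a CNF of width $\le3$, and absorb $\wt w$ into $\wt y_k$ by concatenating binary strings into a single integer variable (still of polynomial bit length). If $k$ is even ($Q_k=\for$) I use uniqueness of $\wt w$ to write $C_{\ellR}=1\iff\for\wt w\,(\mathrm{cons}\to w_{\mathrm{out}})$, where $\mathrm{cons}\to w_{\mathrm{out}}$, being $\lnot\mathrm{cons}\lor w_{\mathrm{out}}$, is a DNF of width $\le3$, and again absorb $\wt w$ into $\wt y_k$. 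Either way I arrive at $\wt x\in\L_{\ellR}\iff x\in[0,2^{\ellR})\land\bigl[\ex\wt y_1\cdots Q_k\wt y_k : D(\wt x,\wt y_1,\dots,\wt y_k)\bigr]$, with $k$ alternating quantifiers ($Q_1=\ex$, all lengths polynomial), where $D$ is a CNF of width $\le3$ if $k$ is odd and a DNF of width $\le3$ if $k$ is even.

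It remains to run the bit-extraction of~\eqref{eq:bit} in the direction forced by this parity. Each literal on a bit of $x$ or of some $y_i$ is equivalent to an $\ex z$ statement over a pair of integer-coefficient inequalities. If $D=\bigwedge_j(a_j\lor b_j\lor c_j)$, then exactly as in Lemma~\ref{lem:Ppo_to_Pr} one has $(a_j\lor b_j\lor c_j)\iff\for\z\in[0,2^q)^3:\Psi_j$ with $\Psi_j$ a disjunction of six inequalities, and since $\for$ distributes over $\land$ all clauses may share one $\z$, giving $D\iff\for\z:\bigwedge_j\Psi_j$. If $D=\bigvee_t(\ell_{t,1}\land\ell_{t,2}\land\ell_{t,3})$, then dually $(\ell_{t,1}\land\ell_{t,2}\land\ell_{t,3})\iff\ex\z\in[0,2^q)^3:\theta_t$ with $\theta_t$ a conjunction of six inequalities, and since $\ex$ distributes over $\lor$ all terms may share one $\z$, giving $D\iff\ex\z:\bigvee_t\theta_t$. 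In both cases $q\le\polyin_{\L}(\ellR)$, the quantifier-free $\Phi_{\ellR}$ (a CNF or a DNF of polynomially many inequalities with polynomially bounded coefficients) has $\phi(\Phi_{\ellR})\le\polyin_{\L}(\ellR)$, and the new quantifier $Q_{k+1}$ over $\z$ is $\for$ when $k$ is odd and $\ex$ when $k$ is even --- continuing the alternation --- so~\eqref{eq:PH_Pr} follows with $Q_1=\ex$ and $k+1$ alternating quantifiers. The $\PiP_k\po$ case then follows by applying the above to the complement of $\L$ and negating the resulting sentence (pushing $\lnot$ through the quantifiers and down to the atoms, using $\lnot(\mathbf a\cdot\x\le b)\iff\mathbf a\cdot\x\ge b+1$ over $\Z$), which flips $Q_1$ to $\for$ and keeps $\Phi_{\ellR}$ quantifier-free.

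I expect the parity handling to be the crux: when $Q_k=\for$ one cannot simply append an $\ex\wt w$ block without producing $k+2$ alternations instead of $k+1$, so the substitution of $\for\wt w\,(\mathrm{cons}\to w_{\mathrm{out}})$ for $\ex\wt w\,(\mathrm{cons}\land w_{\mathrm{out}})$ --- valid precisely because the consistent gate assignment is unique --- is what keeps the alternation count correct, at the price of running the subsequent bit-extraction as ``$\ex\z$ over a DNF'' rather than the ``$\for\z$ over a CNF'' of Lemma~\ref{lem:Ppo_to_Pr}.
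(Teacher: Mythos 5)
Your proposal is correct and follows essentially the same route as the paper: the paper writes out only the $k=1$ case (where the gate variables merge into the existential certificate block) and dismisses higher levels with ``argued similarly,'' while you supply exactly the missing detail for even $k$ --- replacing $\ex\wt w\,(\mathrm{cons}\land w_{\mathrm{out}})$ by $\for\wt w\,(\mathrm{cons}\to w_{\mathrm{out}})$ via uniqueness of the consistent gate assignment, and then running the bit-extraction dually as $\ex\z$ over a width-$3$ DNF. This parity analysis is precisely what keeps the alternation count at $k+1$ with $Q_1=\ex$, so your write-up is a valid (and more complete) rendering of the paper's argument.
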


\begin{proof}
For simplicity, we prove the claim for $\L \in \SigmaP_{1} = \NP\po$.
The higher levels $\SigmaP_{k}\po$ and $\PiP_{k}\po$ can be argued similarly.
Since $\L \in \NP\po$, for each $\ellR$, there is a circuit $C_{\ellR}$ of size $\polyin_{\L}(\ellR)$ such that
\begin{equation}\label{eq:NP_cert}
\wt x \in \L_{\ellR} \quad \iff \quad \ex\, \wt c \in \{0,1\}^{\rCr} \. : \. C_{\ellR}(\wt x,\wt c) = 1\ts ,
\end{equation}
where $\rCr \le \textup{poly}_{\L}(\ellR)$ is the certificate length.
%%The condition $M(\wt x,\wt c) = 1$ can be converted to $C_{\ellR}(\wt x,\wt c) = 1$, where $C_{\ellR}$ is a Boolean circuit of size $\textup{poly}_{\L}(\ellR)$ having both $\wt x$ and $\wt c$ as inputs.
The circuit $C_{\ellR}$ also has $p$ non-input gates with $p \le \polyin_{\L}(\ellR)$.
Let $p' = \rCr + p$.
Note that the certificate gates $\wt c \in \{0,1\}^{\rCr}$ and the non-input gates 
$\wt y \in \{0,1\}^{p}$ can be coded by a single integer $y \in [0,2^{p'})$.
The argument now proceeds similarly to Lemma~\ref{eq:Ppo_Pr} with $p'$ in place 
of~$p$.  \end{proof}

\begin{rem}
In~\cite[Lemma~5.2]{G}, Gr\"{a}del gave a similar representation to~\eqref{eq:PH_Pr}.
In his representation, each string $\wt x = (x_{1},\dots,x_{\ellR}) \in \{0,1\}^{\ellR}$ is not simply mapped to its binary integer value, but to:
\[
x = p_{1}^{x_{1}} \dots p_{\ellR}^{x_{\ellR}} \, q_{1}^{1 - x_{1}} \dots q_{\ellR}^{1 - x_{\ellR}},
\]
where $p_{1},\dots,p_{\ellR}, q_{1},\dots, q_{\ellR}$ are the first $2\ellR$ prime numbers.
\end{rem}

\begin{rem}
From this result, we see that the problem of deciding $\Pr$ sentences of the form \ts $\ex\y\ts \for\z \, \Phi(\y,\z)\ts $ is at least $\NP$-hard.
Sch\"{o}ning \cite{S} showed that the problem is $\NP$-complete even for the case $\ts \ex y \ts \for z \, \Phi(y,z)$, i.e., when both variables are singletons.
\end{rem}

\begin{defi}\label{def:anti-projection}
Let $f = \sum \t^{\x} \u^{\y} = \GFof(S; \t,\u)$, where $S$ is a subset of a finite box $I \times J$.
%%For a GF $f(\t,\u) = \sum \t^{\x} \u^{\y}$ with $\supp(f) \subseteq I \times J$,
The \emph{anti-projection} $\cproj_{\x}(f)$ is $F(I; \t) - \proj_{\x}(f)$,
%% \begin{equation*}
%% \cproj_{\x}(f) = I \cpl \proj_{\x}(f),
%% \end{equation*}
where the projection $\proj_{\x}(f)$ is from Definition~\ref{def:proj_and_spec}.
%%and the complement $\cpl$ is from Definition~\ref{def:complement}.
The box $I \times J$ is always specified before taking the anti-projection.
\end{defi}

\begin{theo}\label{th:PH_to_GF}
For every language $\L \in \SigmaP_{k}\po$ and $\ellR > 0$, there exists a short GF $f_{\ellR} \in \GF_{k+2,k+4}$ of the form $f_{\ellR}(t,u_{1},\dots,u_{k},v) = \sum t^{x} u_{1}^{y_{1}} \dots u_{k}^{y_{k}} v^{z}$ such that
\begin{equation}\label{eq:PH_GF}
 %% \sum_{x \in \L_{\ellR}} t^{x}
\GFof(\L_{\ellR};t)  \, =  \proj_{x}\Big( \cproj_{x,y_{1}} \big( \proj_{x,y_{1},y_{2}} ( \cdots (f_{\ellR}) \cdots ) \big) \Big),
\end{equation}
where the $k$ alternating projections and anti-projections are taken in a finite box
\begin{equation*}
B_{\ellR} = [0,2^{\ellR}) \times [0,2^{p_{1}}) \times \dots \times [0,2^{p_{k}}) \times [0,2^{q}).
\end{equation*}
Moreover, we have $p_{1},\dots,p_{k},q,\phi(f_{\ellR}) \le \polyin_{\L}(\ellR)$.
%% and there is also an algorithm to compute $B$ and $f_{\ellR}$ in time $\polyin_{\L}(\ellR)$.
For $\L \in \PiP_{k}\po$, the projections and anti-projections are reversed.
\end{theo}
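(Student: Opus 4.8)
The plan is to reduce Theorem~\ref{th:PH_to_GF} to the quantifier-free Presburger representation of Lemma~\ref{lem:PH_to_Pr} and then mimic the construction in the proof of Theorem~\ref{th:Ppo_to_GF}, reading off alternating projections and anti-projections from the alternating quantifier block. First I would apply Lemma~\ref{lem:PH_to_Pr} to the given language $\L \in \SigmaP_{k}\po$ to obtain a quantifier-free $\Pr$ expression $\Phi_{\ellR}(x,\y,\z)$ in the $k+4$ variables $x \in \N$, $\y = (y_{1},\dots,y_{k}) \in \N^{k}$, $\z \in \N^{3}$, with $p_{1},\dots,p_{k},q,\phi(\Phi_{\ellR}) \le \polyin_{\L}(\ellR)$, so that $\wt x \in \L_{\ellR}$ iff $x \in [0,2^{\ellR})$ and
\[
Q_{1} y_{1} \dots Q_{k} y_{k}\; Q_{k+1}\z \;:\; \Phi_{\ellR}(x,\y,\z),
\]
with $Q_{1} = \ex$ alternating. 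Since $Q_{k+1}$ is an innermost quantifier over $\z$, I would split into two cases according to its parity: if $Q_{k+1} = \ex$, set $f_{\ellR}$ to be the GF of the set $\{(x,\y,\z) \in B_{\ellR} : \Phi_{\ellR}(x,\y,\z)\}$; if $Q_{k+1} = \for$, set $f_{\ellR}$ to be the GF of the complement $\{(x,\y,\z) \in B_{\ellR} : \lnot\Phi_{\ellR}(x,\y,\z)\}$ so that applying $\cproj$ at the outermost $z$-step converts $\ex \z \lnot\Phi_{\ellR}$ into $\for\z\,\Phi_{\ellR}$ exactly as in~\eqref{eq:sum_xy}. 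In either case Theorem~\ref{th:W}, applied to $\Phi_{\ellR}$ or $\lnot\Phi_{\ellR}$ over the box $B_{\ellR}$, produces a short GF $f_{\ellR} \in \GF_{k+2,k+4}$ of finite support with $\phi(f_{\ellR}) \le \polyin(\phi(\Phi_{\ellR})) \le \polyin_{\L}(\ellR)$ (note $k+2$ variables $t,u_{1},\dots,u_{k},v$ after collapsing $\z \in \N^{3}$ to a single variable $v$ via Lemma~\ref{lem:compress}(a), which leaves $x,\y$ untouched and does not affect the relevant projections; this also explains the index bound $k+4 = (k+2) + \ldots$, matching the $5$ in $\GF_{5,5}$ for the case $k=1$).

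Next I would verify that the nested operation in~\eqref{eq:PH_GF} is exactly the translation of the alternating quantifier prefix. Working from the inside out: $\proj_{x,y_{1},\dots,y_{k}}(f_{\ellR})$ over $B_{\ellR}$ eliminates $z$ and realizes $\ex \z$ (resp.\ $\proj$ of the complement then anti-projection realizes $\for\z$); the next step $\cproj_{x,y_{1},\dots,y_{k-1}}$ (resp.\ $\proj$) eliminates $y_{k}$ and realizes $Q_{k}y_{k}$, where the anti-projection (complement-then-project, by Definition~\ref{def:anti-projection}) converts an innermost $\ex$ into $\for$; and so on, with the alternation of $\proj$ and $\cproj$ matching the alternation of $\ex$ and $\for$ and the outermost $\proj_{x}$ realizing $Q_{1}y_{1} = \ex y_{1}$. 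Since $Q_{1} = \ex$ and the quantifiers alternate, the outermost operation is $\proj_{x}$ and the operations alternate correctly with $k$ alternations total over the box $B_{\ellR} = [0,2^{\ellR}) \times [0,2^{p_1}) \times \dots \times [0,2^{p_k}) \times [0,2^q)$, so the resulting GF has support $\{x \in [0,2^{\ellR}) : \wt x \in \L_{\ellR}\}$, i.e.\ equals $\GFof(\L_{\ellR};t)$. The length bound $\phi(f_{\ellR}) \le \polyin_{\L}(\ellR)$ follows from that on $\phi(\Phi_{\ellR})$ together with Theorem~\ref{th:W} and Lemma~\ref{lem:compress}(a). For $\L \in \PiP_{k}\po$ the quantifier prefix starts with $\for$, so one reverses the roles of $\proj$ and $\cproj$ throughout (equivalently, one takes the finite complement of $f_{\ellR}$ inside $B_{\ellR}$ first), giving the reversed nested expression.

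The main obstacle I anticipate is bookkeeping the correspondence between the Presburger quantifier alternation and the GF operation alternation, in particular getting the parity conventions right at the two ends: the innermost $\z$-quantifier $Q_{k+1}$ is \emph{glued} onto the last explicit quantifier in the prefix and absorbed into the choice of whether $f_{\ellR}$ encodes $\Phi_{\ellR}$ or its complement (so that the number of explicit GF operations stays $k$, not $k+1$), and one must check that the outermost $\proj_{x}$ — as opposed to $\cproj_{x}$ — is what reproduces $\GFof(\L_{\ellR};t)$ rather than its finite complement in $[0,2^{\ellR})$. A secondary technical point, already handled in Corollary~\ref{cor:Ppo_to_GF_3}, is confirming that compressing $\z \in \N^{3}$ into a single variable $v$ via Lemma~\ref{lem:compress}(a) commutes with all the projections/anti-projections over the remaining variables and keeps the index and length polynomially bounded; since compression acts only on the $v$-block and leaves $t,u_{1},\dots,u_{k}$ untouched, this is routine but must be stated. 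Everything else — finiteness of supports, the polynomial length bounds, and membership in $\GF_{k+2,k+4}$ — follows directly from Theorem~\ref{th:W}, Theorem~\ref{th:BW} (for the projection steps), Theorem~\ref{th:BPGF2} (for complements), and Lemma~\ref{lem:compress}.
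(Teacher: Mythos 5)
Your proposal is correct and follows essentially the same route as the paper: apply Lemma~\ref{lem:PH_to_Pr}, convert $\Phi_{\ellR}$ (or $\lnot\Phi_{\ellR}$, according to the parity of the innermost quantifier $Q_{k+1}$) into a short GF via Theorem~\ref{th:W} exactly as in the proof of Theorem~\ref{th:Ppo_to_GF}, match the alternating quantifiers to alternating projections/anti-projections, and compress $\z\in\N^{3}$ into a single variable by Lemma~\ref{lem:compress}(a) as in Corollary~\ref{cor:Ppo_to_GF_3}. The paper compresses this into two sentences (``applying the same argument in Theorem~\ref{th:Ppo_to_GF}''), whereas you spell out the parity bookkeeping explicitly; the extra citations of Theorem~\ref{th:BW} and Theorem~\ref{th:BPGF2} at the end are unnecessary but harmless.
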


\begin{proof}
By Lemma~\ref{lem:PH_to_Pr}, we can represent $\L_{\ellR}$ in the form~\eqref{eq:PH_Pr}.
Applying the same argument in Theorem~\ref{th:Ppo_to_GF}, we get $f_{\ellR}(t,u_{1},\dots,u_{k},\v) = \sum t^{x} u_{1}^{y_{1}} \dots u_{k}^{y_{k}} \v^{\z} \in \GF_{k+4,k+4}$ that satisfy~\eqref{eq:PH_GF}.
Applying Lemma~\ref{lem:compress} a), we can compress the last three variables $\v^{\z} = v_{1}^{z_{1}} v_{2}^{z_{2}} v_{3}^{z_{3}}$ into just one variable $v^{w}$ without affecting the projections (see the proof of Corollary~\ref{cor:Ppo_to_GF_3}).
This reduces $f_{\ellR}$ to a short GF in $\GF_{k+2,k+4}$.
\end{proof}

\begin{rem}\label{rem:in_PH}
If in addition $L \in \PH$, then both $\Phi_{\ellR}$ and $f_{\ellR}$ in Lemma~\ref{lem:PH_to_Pr} and Theorem~\ref{th:PH_to_GF} can be computed in time $\polyin_{\L}(\ellR)$.
Indeed, if $\L \in \PH$, the circuit $C_{\ellR}$ for $\L_{\ellR}$ in Lemma~\ref{lem:PH_to_Pr}'s proof can be automatically generated by some polynomial time Turing Machine $M$.
We can convert $C_{\ellR}$ to $\Phi_{\ellR}$ in polynomial time, which allows us to find $f_{\ellR}$.
\end{rem}

As a consequence, we obtain the following result.

\begin{cor}\label{cor:2_proj_hard}
Assume we are given $a_{0} \in \N$, a short GF $f(t,u,v) = \sum t^{x} u^{y} v^{z} \in \GF_{3,5}$, and a finite box $B \subset \N^{3}$ with $\supp(f) \subseteq B$.
Then deciding whether $a_{0} \in \supp(h)$ is $\NP$-complete,
where $h = \proj_{x}(\cproj_{x,y}(f))$.
Here the projection and anti-projection are taken within $B$.
\end{cor}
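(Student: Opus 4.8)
The plan is to prove membership in $\NP$ and $\NP$-hardness separately, both leveraging Theorem~\ref{th:PH_to_GF} at level $k=1$ together with the polynomial-time machinery of theorems~\ref{th:BPGF1} and~\ref{th:BPGF2}.

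For \textbf{membership in $\NP$}: unwinding the definitions of $\proj$, $\cproj$, we have $a_{0} \in \supp(h)$ if and only if $a_{0} \in [0, 2^{\ellR})$ (the $x$-range of $B$) and there exists $b_{0}$ in the $y$-range of $B$ such that $(a_{0}, b_{0}) \notin \supp(\proj_{x,y}(f))$, i.e., for all $c$ in the $z$-range, $(a_{0}, b_{0}, c) \notin \supp(f)$. The witness is the value $b_{0}$, which has bit length $\polyin(\phi(f))$ since $B$ has polynomial bit length. Given the witness, I need to verify that $(a_{0}, b_{0}) \notin \supp(\proj_{x,y}(f))$ in polynomial time; but this is exactly what Proposition~\ref{prop:supp_proj_is_P} provides (applied to the short GF $f$ with the roles of the projected and quantified variables as indicated, and to the point $(a_{0},b_{0})$). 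Hence the verification runs in time $\polyin(\phi(f) + \phi(a_{0}) + \phi(b_{0}))$, and $h \in \NP$.

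For \textbf{$\NP$-hardness}: I would reduce from an arbitrary language $\L \in \NP = \SigmaP_{1}$. Since $\NP \subseteq \NP\po = \SigmaP_{1}\po$ and moreover $\L \in \PH$, Theorem~\ref{th:PH_to_GF} together with Remark~\ref{rem:in_PH} gives, for each input length $\ellR$, a short GF $f_{\ellR}(t,u,v) \in \GF_{3,5}$ and a box $B_{\ellR} \subset \N^{3}$, both computable in time $\polyin_{\L}(\ellR)$, such that at level $k=1$ equation~\eqref{eq:PH_GF} reads
\[
\GFof(\L_{\ellR}; t) \, = \, \proj_{x}\big(\cproj_{x,y}(f_{\ellR})\big),
\]
with the projection and anti-projection taken inside $B_{\ellR}$. (Strictly, the right-hand side of~\eqref{eq:PH_GF} at $k=1$ is $\proj_{x}(\cproj_{x,y_{1}}(f_{\ellR}))$, which after the compression step is a GF in the three variables $t,u,v$; this is precisely the GF $h$ in the statement.) Thus for a string $\wt x \in \{0,1\}^{\ellR}$ with integer value $x$, we have $\wt x \in \L$ if and only if $x \in \supp\big(\proj_{x}(\cproj_{x,y}(f_{\ellR}))\big)$. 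Given an instance $\wt x$ of $\L$, compute $\ellR = |\wt x|$, then $f_{\ellR}$, $B_{\ellR}$, and the integer value $a_{0} = x$, all in polynomial time, and feed $(a_{0}, f_{\ellR}, B_{\ellR})$ to the oracle for our membership problem. This is a polynomial-time many-one reduction from $\L$ to our problem, so the problem is $\NP$-hard; combined with the previous paragraph, it is $\NP$-complete.

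The \textbf{main obstacle} is bookkeeping rather than conceptual: one must check that the specific two-operation nesting $\proj_{x}(\cproj_{x,y}(f))$ in the corollary matches exactly the $k=1$ instance of the alternating chain in~\eqref{eq:PH_GF} (including the outermost $\proj_x$, which collapses the redundant $x$-quantifier and is harmless), and that the anti-projection's ``box specified in advance'' ($B_{\ellR}$) is the same box used throughout, so that the complement in Definition~\ref{def:anti-projection} is taken correctly. A secondary point to handle carefully is that the compression of the three $\z$-variables into the single variable $v$ (via Lemma~\ref{lem:compress}) does not disturb either the $x,y$-projection or the subsequent $x$-projection, exactly as argued in the proof of Corollary~\ref{cor:Ppo_to_GF_3} and Theorem~\ref{th:PH_to_GF}; once that is granted, both directions are immediate.
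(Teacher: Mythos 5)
Your proposal is correct and follows essentially the same route as the paper: membership in $\NP$ via a certificate $b_{0}$ of polynomial length verified by Proposition~\ref{prop:supp_proj_is_P}, and $\NP$-hardness by applying Theorem~\ref{th:PH_to_GF} at $k=1$ together with Remark~\ref{rem:in_PH} to compute $f_{\ellR}$ and $B_{\ellR}$ in polynomial time. The bookkeeping points you flag (matching the $k=1$ nesting and the compression of $\z$ into $v$) are handled exactly as you describe in the paper's proofs of Theorem~\ref{th:PH_to_GF} and Corollary~\ref{cor:Ppo_to_GF_3}.
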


\begin{proof}
If $a_{0} \in \supp(h)$, there exists some $b_{0}$ so that $(a_{0},b_{0})$ lies in the support of $\cproj_{x,y}(f)$.
Since $\cproj_{x,y}(f)$ is taken within $B$, which is bounded, both $a_{0}$ and $b_{0}$ must have polynomial lengths.
Given such a certificate $b_{0}$, we can verify if $(a_{0},b_{0})$ lies in the support of $\proj_{x,y}(f)$ in polynomial time, by applying Proposition~\ref{prop:supp_proj_is_P}.
Taking a negation, we can also check whether $(a_{0},b_{0})$ lies in the anti-projection $ \cproj_{x,y}(f)$.
This shows the problem is in $\NP$.

The problem is also $\NP$-hard.
Indeed, let $\L$ be an $\NP$ language.
Applying Theorem~\ref{th:PH_to_GF} for the case $\L \in \NP$, we have $\GFof(\L_{\ellR}; t) \. = \. \proj_{x}\bigl(\cproj_{x,y}(f_{\ellR})\bigr),$
%% \begin{equation*}
%% \sum_{x \in \L_{\ellR}} \ts t^{x} \. = \. \proj_{x}\bigl(\cproj_{x,y}(f_{\ellR})\bigr),
%% \end{equation*}
where $f_{\ellR}$ is supported inside a box $B_{\ellR}$.
By Remark~\ref{rem:in_PH}, we can compute $f_{\ellR}$ and $B_{\ellR}$ in polynomial time.
So checking $x \in \L_{\ellR}$ is equivalent to checking $x \in \supp(h_{\ellR})$, where $h_{\ellR} = \proj_{x}(B_{\ellR} \cpl \proj_{x,y}(f_{\ellR}))$.
\end{proof}

\begin{rem}
Compared to Proposition~\ref{prop:supp_proj_is_P}, we see that it is no longer easy to check for membership after taking two separate projections on a short GF.
\end{rem}

\bigskip

\section{A hierarchy of generating functions}\label{sec:GF}

We introduce a hierarchy $\GH$ of languages expressible as projections of generating functions.
First, we define the lowest level $\Gzero = \SigmaG_{0} = \PiG_{0}$.

\begin{defi}\label{def:Gzero}
For a language $\L \in \{0,1\}^{*}$, we say that $\L \in \Gzero$ if there is an $s > 0$ so that for every $\ellR > 0$, we can represent $\GFof(\L_{\ellR}; t) = f_{\ellR}(t)$
%% \begin{equation*}
%%   \sum_{x \in \L_{\ellR}} t^{x} = f_{\ellR}(t),
%% \end{equation*}
where $f_{\ellR} \in \GF_{1,s}$ and $\phi(f_{\ellR}) \le \polyin_{\L}(\ellR)$.
In other words, every segment $\L_{\ellR}$ can be represented as a
short GF of polynomial length in some fixed class $\GF_{1,s}$.
\end{defi}

We define higher classes $\SigmaG_{k}$ and $\PiG_{k}$ by taking repeated projections/anti-projections.

\begin{defi}\label{def:SigmaG}
For a language $\L \in \{0,1\}^{*}$, we say that $\L \in \SigmaG_{k}$ if there is an $s > 0$ so that for every $\ellR > 0$, we can represent:
\begin{equation}\label{eq:SigmaG}
%%  \sum_{x \in \L_{\ellR}} t^{x}
  \GFof(\L_{\ellR}; t) \, = \; \proj_{x}\Big( \cproj_{x,y_{1}} \big( \proj_{x,y_{1},y_{2}} ( \cdots (f_{\ellR}) \cdots ) \big) \Big),
\end{equation}
where $f_{\ellR}(t,u_{1},\dots,u_{k}) = \sum t^{x} u_{1}^{y_{1}} \dots u_{k}^{y_{k}} \in \GF_{k+1, s}$ is supported inside a finite box $B_{\ellR}$, with both $\phi(B_{\ellR}),\phi(f_{\ellR}) \le \polyin_{\L}(\ellR)$.
The $k$ alternating projections/anti-projections are taken within $B_{\ellR}$.
The class $\PiG_{k}$ is defined similarly, with the projections/anti-projections in~\eqref{eq:SigmaG} reversed.
Alternatively, $\L \in \PiG_{k}$ if and only if the complement language $\lnot \L$ is in $\SigmaG_{k}$.
\end{defi}

\begin{defi}\label{def:GH}
$\GH$ is the union of all $\SigmaG_{k}$ and $\PiG_{k}$ for all $k \ge 0$.
\end{defi}

We list some properties of $\GH$:
\smallskip
\begin{itemize}
\setlength\itemsep{1em}
\item $\SigmaG_{k},\, \PiG_{k} \,\subseteq\, \SigmaG_{k+1} \cap \PiG_{k+1}$ for all $k \ge 0$.

\item $\Gzero,\, \SigmaG_{1},\, \PiG_{1} \,\subseteq\, \Ppo$ (propositions~\ref{prop:supp_is_P} and~\ref{prop:supp_proj_is_P}).

\item $\Ppo \subseteq \unique\PiG_{1}$, the subclass of $\SigmaG_{2}$ with only $\spec_{x}$ and $\cproj_{x,y}$ (Theorem~\ref{th:Ppo_to_GF}).

\item In fact, $\unique\PiG_{1} = \unique\Ppo$ (Remark~\ref{rem:UP}).

\item $\SigmaP_{k}\po \subseteq \SigmaG_{k+1},\, \PiP_{k}\po \subseteq \PiG_{k+1}$ for all $k \ge 1$ (Theorem~\ref{th:PH_to_GF}).
\end{itemize}
\smallskip

The last property can actually be strengthened to:

\begin{theo}\label{th:GH_PHpo}
$\SigmaP_{k}\po = \SigmaG_{k+1} $ and $\,\PiP_{k}\po = \PiG_{k+1}$ for every $k \ge 1$. So $\GH = \PH\po$, i.e., $\GH$ is exactly the non-uniform version of $\PH$.
\end{theo}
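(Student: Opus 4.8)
The plan is to prove the two containments $\SigmaP_{k}\po \subseteq \SigmaG_{k+1}$ and $\SigmaG_{k+1} \subseteq \SigmaP_{k}\po$ separately for each $k \ge 1$; the $\Pi$ versions follow by complementation (using the last clause of Definition~\ref{def:SigmaG} and the fact that $\PiP_{k}\po$ is the class of complements of $\SigmaP_{k}\po$ languages), and then $\GH = \PH\po$ is immediate from Definition~\ref{def:GH}. The forward inclusion $\SigmaP_{k}\po \subseteq \SigmaG_{k+1}$ is essentially already done: Theorem~\ref{th:PH_to_GF} produces, for $\L \in \SigmaP_{k}\po$, a short GF $f_{\ellR} \in \GF_{k+2,k+4}$ realizing $\GFof(\L_{\ellR};t)$ via $k$ alternating projections/anti-projections starting with $\proj_x$, inside a polynomial-size box, with $\phi(f_\ellR) \le \polyin_\L(\ellR)$. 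Comparing with Definition~\ref{def:SigmaG}, this is exactly the shape required for membership in $\SigmaG_{k+1}$ (the index and number of variables are bounded by absolute constants $k+4$ and $k+2$, which is all that is asked), so the forward direction requires only citing Theorem~\ref{th:PH_to_GF} and matching definitions.

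The real content is the reverse inclusion $\SigmaG_{k+1} \subseteq \SigmaP_{k}\po$. Suppose $\L \in \SigmaG_{k+1}$, witnessed by short GFs $f_{\ellR}(t,u_1,\dots,u_{k+1}) \in \GF_{k+2,s}$ supported in a box $B_\ellR$ with everything of polynomial length, and $\GFof(\L_\ellR;t)$ obtained from $f_\ellR$ by $k+1$ alternating projections/anti-projections beginning with $\proj_x$. I want to show $\L_\ellR$ is decided by a $\SigmaP_{k}\po$ formula, i.e.\ a polynomial-size circuit predicate with $k$ alternating quantifier blocks $\ex\wt y_1\,\for\wt y_2\cdots$ over polynomially-long strings. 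The idea is to unwind the chain of projections into alternating bounded quantifiers over the box $B_\ellR$: membership of $x$ in the outermost $\proj_x$ means $\ex y_1 \in [0,2^{p_1})$ such that $(x,y_1)$ is in the support of the next layer; that layer is a $\cproj_{x,y_1}$, so $(x,y_1)$ lies in it iff it is \emph{not} in $\proj_{x,y_1}(\cdots)$, i.e.\ $\for y_2\,\lnot[\,(x,y_1,y_2)\in\text{next layer}\,]$; and so on. This produces $k+1$ alternating quantifiers $\ex y_1\,\for y_2 \cdots$ whose innermost predicate is ``$(x,y_1,\dots,y_{k+1}) \in \supp(f_\ellR)$'', which by Proposition~\ref{prop:supp_is_P} is computable in time $\polyin(\phi(f_\ellR)+\text{lengths})=\polyin_\L(\ellR)$ — hence expressible by a polynomial-size circuit. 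This has $k+1$ quantifier blocks; to land in $\SigmaP_{k}\po$ I merge the last two same-type quantifier blocks. A subtlety: whether the $(k+1)$-st quantifier is $\ex$ or $\for$. Since the chain in~\eqref{eq:SigmaG} is $\proj_x(\cproj_{x,y_1}(\proj_{x,y_1,y_2}(\cdots)))$, reading outward-in, the quantifier pattern is $\ex,\for,\ex,\dots$ with the $(k+1)$-st quantifier being $\for$ if $k+1$ is even, $\ex$ if odd — and the very innermost ``layer'' when we reach $f_\ellR$ itself is governed by the same parity, so the last two quantifiers $Q_k y_k\, Q_{k+1} y_{k+1}$ have the \emph{same} type and can be collapsed into a single quantifier over $(y_k,y_{k+1})$, yielding exactly $k$ alternating blocks starting with $\ex$. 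Wait — I need to check this parity bookkeeping carefully, because the alternation count is what distinguishes $\SigmaG_{k+1}$ from $\SigmaP_k\po$ (there are $k$ proj/anti-proj steps in Def~\ref{def:SigmaG}, which after the final specialization/evaluation correspond to $k$ quantifiers plus the $\proj_x$-to-specialization collapse, so the arithmetic should work out to $k$ alternations).

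The main obstacle is precisely this careful alternation/parity accounting together with ensuring the innermost predicate genuinely sits in $\Ppo$: one must confirm that evaluating the coefficient $[\,\t^{(x,y_1,\dots,y_{k+1})}]f_\ellR$ is polynomial-time (Proposition~\ref{prop:supp_is_P} gives this, for fixed $k+2$ variables and fixed index $s$, so the class $\GF_{k+2,s}$ being a \emph{fixed} class is essential and matches Definition~\ref{def:SigmaG}), and that each bounded quantifier ranges over strings of length $\le \max(p_i)\le \polyin_\L(\ellR)$ coming from the box $B_\ellR$ being polynomial-size. A second, smaller point: the equivalence should be stated uniformly in $\ellR$ at the non-uniform level (we only need, for each $\ellR$, \emph{existence} of the appropriate circuit, not its computability), so no uniformity hypothesis on $\L$ is needed — this is why the theorem can assert equality of the non-uniform classes. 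Once these are in place, I conclude $\L \in \SigmaP_k\po$, finishing the reverse inclusion; the $\Pi$ case is verbatim with quantifiers and proj/anti-proj swapped (or deduced by complementation), and $\GH = \bigcup_k(\SigmaG_k \cup \PiG_k) = \bigcup_k(\SigmaP_{k-1}\po\cup\PiP_{k-1}\po)=\PH\po$, using also the base-level identifications $\Gzero,\SigmaG_1,\PiG_1\subseteq\Ppo\subseteq\SigmaG_2$ already listed before the theorem.
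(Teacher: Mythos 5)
Your forward inclusion and the overall architecture (unwind the projections into bounded quantifiers over the box $B_{\ellR}$, hardwire $f_{\ellR}$ as non-uniform advice, handle the $\Pi$ case by complementation) match the paper. The gap is in the step that gets you from $k+1$ quantifier blocks down to $k$. Unwinding all $k+1$ alternating projections/anti-projections of Definition~\ref{def:SigmaG} down to the atomic predicate ``$(x,y_{1},\dots,y_{k+1})\in\supp(f_{\ellR})$'' produces a prefix of $k+1$ \emph{alternating} quantifiers $Q_{1}y_{1}\cdots Q_{k+1}y_{k+1}$ with $Q_{1}=\ex$. In an alternating prefix, $Q_{k}$ and $Q_{k+1}$ are of \emph{opposite} types, so the merge of ``the last two same-type blocks'' that you propose is simply not available; there is no extra layer attached to $f_{\ellR}$ that duplicates the type of $Q_{k+1}$. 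Taken at face value, your argument only places $\L$ in $\SigmaP_{k+1}\po$, which is off by one and does not close the equality.

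The correct way to save one alternation --- and what the paper does --- is to stop the unwinding one level early: the innermost operation is a single projection or anti-projection of the fixed-class short GF $f_{\ellR}\in\GF_{k+2,s}$, and by Proposition~\ref{prop:supp_proj_is_P} membership of $(x,y_{1},\dots,y_{k})$ in its support is decidable in time $\polyin(\phi(f_{\ellR}))$. That is, the innermost quantifier $Q_{k+1}y_{k+1}$ is absorbed outright into the polynomial-size circuit predicate, not merely into an $\NP$ or $\coNP$ predicate. This is where the short-GF structure is genuinely used; Proposition~\ref{prop:supp_is_P}, which you invoke instead, only handles membership in $\supp(f_{\ellR})$ itself and does not eat a quantifier. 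With that substitution the remaining $k$ alternating blocks give exactly a $\SigmaP_{k}\po$ characterization of $\L_{\ellR}$ with $f_{\ellR}$ and $B_{\ellR}$ as advice, and the rest of your write-up (the $\Pi$ case and the conclusion $\GH=\PH\po$) goes through.
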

\begin{proof}
Theorem~\ref{th:PH_to_GF} already showed inclusion in one direction.
For the other direction, assume $\L \in \SigmaG_{k+1}$. From Definition~\ref{def:SigmaG}, for every $\ellR > 0$, we have:
\begin{equation*}
%%  \sum_{x \in \L_{\ellR}} t^{x}
  \GFof(\L_{\ellR}; t) \, = \, \proj_{x}\Big( \cproj_{x,y_{1}} \big( \proj_{x,y_{1},y_{2}} ( \cdots (f_{\ellR}) \cdots ) \big) \Big),
\end{equation*}
where $f_{\ellR}$ is a short GF of length $\polyin_{\L}(\ellR)$ in some fixed class $\GF_{k+2,s}$.
Here we are taking $k+1$ alternating projections and anti-projections on $f_{\ellR}(x,y_{1},\dots,y_{k+1}) = \sum t^{x} u_{1}^{y_{1}} \dots u_{k+1}^{y_{k+1}}$ within some finite box $B_{\ellR}$.
Note that by Proposition~\ref{prop:supp_proj_is_P}, we can check in polynomial time if $(x,y_{1},\dots,y_{k})$ lies in the inner most projection/anti-projection.
So given $f_{\ellR}$ as an advice string, we can decide if $x \in \L_{\ellR}$ by calling a $\SigmaP_{k}$ oracle for the remaining $k$ projections/anti-projections.
This implies $\L \in \SigmaP_{k}\po$.
The case $\L \in \PiG_{k+1}$ is similar.
\end{proof}

\bigskip

\section{Short GFs have long projections} \label{sec:long-proj}

\subsection{Proof of Theorem~\ref{th:main_1}}

\begin{theo}\label{th:weaker_assumption}
If $\sharpP \not\subseteq \FPpo$, then $\Gzero \subsetneq \Ppo$.
\end{theo}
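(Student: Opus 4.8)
The plan is to show the contrapositive: if $\Gzero = \Ppo$, then $\sharpP \subseteq \FPpo$. So assume every language $\L \in \Ppo$ has, for each $\ellR$, a short GF $f_{\ellR}(t) \in \GF_{1,s}$ (for some fixed $s$ depending on $\L$) with $\GFof(\L_{\ellR};t) = f_{\ellR}(t)$ and $\phi(f_{\ellR}) \le \polyin_{\L}(\ellR)$. The key point is that a short GF of polynomial length can be \emph{evaluated} and \emph{counted} in polynomial time: by Theorem~\ref{th:BPGF1}, given $f_{\ellR} \in \GF_{1,s}$ of finite support we can compute $|\supp(f_{\ellR})| = f_{\ellR}(1)$ in time $\polyin(\phi(f_{\ellR}))$. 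So if we can get our hands on $f_{\ellR}$ as a polynomial-size advice string, we can count $|\L_{\ellR}|$ in polynomial time — and counting the number of length-$\ellR$ strings in a $\Pp$ language is, after a suitable choice of $\L$, a $\sharpP$-complete problem. The circuitry $\GF_{1,s}$ gives exactly the "/poly" advice, and the polynomial-time evaluation gives the "$\FP$".

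The steps, in order. First I would fix a $\sharpP$-complete counting problem whose instances of size $\ellR$ correspond to counting witnesses of a fixed $\Pp$-relation $R(\wt x, \wt w)$ with $|\wt w| \le \polyin(\ellR)$; equivalently, counting accepting paths of a fixed poly-time NDTM. Second, I would encode this as a language: pad and interleave so that the set of length-$\ellR'$ strings accepted (for an appropriate $\ellR' = \polyin(\ellR)$) has cardinality equal to (or easily yielding) the desired count — concretely, take $\L = \{\langle \wt x, \wt w\rangle : R(\wt x,\wt w)\}$, so that $|\L_{\ellR'} \cap (\{\wt x\}\times\{0,1\}^{*})|$ is the number of witnesses for $\wt x$; a union/subtraction of two such counts (over the box $[x\cdot 2^m, (x+1)2^m)$) isolates the count for a single $\wt x$, and this can be done via Hadamard product with the GF of that box, staying in a fixed $\GF_{1,s'}$ class by Theorem~\ref{th:BPGF2}. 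Since $\L \in \Pp \subseteq \Ppo$ and we are assuming $\Gzero = \Ppo$, there is a short GF $f_{\ellR'}(t) \in \GF_{1,s}$ of polynomial length for $\L_{\ellR'}$. Third, take $f_{\ellR'}$ itself as the advice: it has length $\polyin(\ellR')= \polyin(\ellR)$, and from it a polynomial-time machine computes the box-restricted Hadamard product (Theorem~\ref{th:BPGF2}) and then its value at $1$ (Theorem~\ref{th:BPGF1}), recovering the number of witnesses for $\wt x$. This is an $\FPpo$ algorithm for the chosen $\sharpP$-complete function, hence $\sharpP \subseteq \FPpo$, contradicting the hypothesis — so $\Gzero \subsetneq \Ppo$.

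The main obstacle, and the place deserving the most care, is making the counting reduction genuinely work with the "segment" formalism of Definition~\ref{def:L_l}: $\L_\ellR$ only records \emph{which} length-$\ellR$ strings are in $\L$, so I must arrange the encoding $\langle \wt x,\wt w\rangle$ and the bit-length bookkeeping so that (a) all pairs for a given $\wt x$ occupy a single known interval of integers, (b) that interval has polynomially-many bits, and (c) restricting the short GF to that interval and evaluating at $1$ yields exactly the witness count. Point (a)–(c) is where Theorem~\ref{th:BPGF2} (Hadamard product with the GF of an explicit box, which itself is a short GF by Barvinok's Theorem~\ref{th:Barvinok} or trivially) and Theorem~\ref{th:BPGF1} (computing $N$, $M$, and substitutions) do the work, but one must check the index stays bounded by a constant (it does: $s + n$ for the fixed box dimension $n=1$) and that all lengths remain polynomial. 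A secondary subtlety is that $\Ppo$ is non-uniform: we do not need to \emph{construct} $f_{\ellR'}$, only to use it as advice — which is exactly why the conclusion lands in $\FPpo$ rather than $\FP$, matching the hypothesis $\sharpP \not\subseteq \FPpo$.
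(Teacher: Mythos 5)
Your proposal is correct and follows essentially the same route as the paper: both fix a $\sharpP$-complete witness-counting problem, view the pair $(\wt x,\wt w)$ as a single integer so that the witnesses of a fixed $\wt x$ occupy an explicit short-GF-definable set, and then use the Hadamard product (Theorem~\ref{th:BPGF2}) followed by evaluation at $t=1$ (Theorem~\ref{th:BPGF1}) with the hypothetical short GF serving as the polynomial-size advice. The only cosmetic difference is your choice of selector set (an interval, with $\wt x$ in the high bits) versus the paper's arithmetic progression $\{x+2^{\ellR}c\}$ (with $\wt x$ in the low bits); both are short GFs of polynomial length and bounded index, so the arguments coincide.
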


\begin{proof}
We saw in Section~\ref{sec:GF} that $\Gzero \subseteq \Ppo$.
Now we show $\Ppo$ is strictly larger than~$\Gzero$.
Let $\#\L$ be an $\sharpP$-complete problem (e.g. $\textsc{\#3SAT}$), which is outside of $\FPpo$ by the assumption $\sharpP \not\subseteq \FPpo$.
Associated to $\#\L$ is a polynomial time Turing machine $M$.
Given $\wt x \in \{0,1\}^{\ellR}$, $\#\L$ asks for the number of certificates $\wt c \in \{0,1\}^{\ellR}$ that satisfy $M(\wt x, \wt c) = 1$.
Define a language:
\begin{equation}\label{eq:M_lang}
\M = \{(\wt x, \wt c) : \text{length}(\wt x) = \text{length}(\wt c) \; \text{ and } \; M(\wt x,\wt c) = 1\}.
\footnote{In general, the instance $\wt x$ and certificate $\wt c$ can have different lengths. However, the Turing Machine $M$ can always be modified to accept only $\wt c$ and $\wt x$ of equal lengths.}
\end{equation}
Since $M$ runs in polynomial time, we also have $\M \in \Ppo$.
We show that $\M \notin \Gzero$.

Assume the contrary, i.e., $\M \in \Gzero$. Then there is a fixed $s$ so that for every $\ellR > 0$, we have $\M_{\ellR} = \supp(f_{\ellR})$, where $f_{\ellR} \in \GF_{1,s}$ and $\phi(f_{\ellR}) \le \polyin(\ellR)$.
Let $x,c \in [0,2^{\ellR})$ be the integers corresponding to $\wt x, \wt c \in \{0,1\}^{\ellR}$.
Then the concatenated string $(\wt x, \wt c)$ corresponds to $x + 2^{\ellR} c$.
We assumed that there is an $f_{2\ellR} \in \GF_{1,s}$ such that
\begin{equation*}
\phi(f_{2\ellR}) \le \polyin(\ellR) \quad \text{and} \quad \sum_{(\wt x, \wt c) \in \M_{2\ellR}} t^{x+2^{\ellR}c} = f_{2\ellR}(t).
\end{equation*}
Given $\wt x \in \{0,1\}^{\ellR}$, we must compute the number of $\wt c \in \{0,1\}^{\ellR}$ which satisfy $(\wt x, \wt c) \in \M_{2\ellR}$. Define
\begin{equation}\label{eq:sum_c}
g_{x}(t) = \sum_{0 \le c < 2^{\ellR}} t^{x + 2^{\ellR}c} = t^{x} \frac{1-t^{2^{2\ellR}}}{1-t^{2^{\ellR}}}.
\end{equation}
We have $\phi(g_{x}) \le \polyin(\ellR)$.
We also have $f_{2\ellR} \in \GF_{1,s}$ and $g_{x} \in \GF_{1,1}$.
Therefore, by Theorem~\ref{th:BPGF2}, the short GF $h_{x} = f_{2\ellR} \.\star\. g_{x}$ can be computed in time $\polyin(\phi(f_{2\ellR}) + \phi(g_{x})) \le \polyin(\ellR)$.
The number of certificates $\wt c$ for $\wt x$ is simply $h_{x}(1)$.
This substitution can be computed in time $\polyin(\ellR)$ by Theorem~\ref{th:BPGF1}.

To summarize, the short GF $f_{2\ellR}$ gives us a polynomial size circuit to solve $\#\L$ for all inputs $\wt x \in \{0,1\}^{\ellR}$ in time $\polyin(\ellR)$.
We conclude that $\#\L \in \FPpo$, a contradiction.
\end{proof}

Now we can formulate Theorem~\ref{th:main_1} in precise terms:

\begin{cor}\label{cor:proj_long_strong}
If $\sharpP \not\subseteq \FPpo$, then $\GH$ does not collapse to its $0$th level $\Gzero$.
In other words, there is a sequence $\big\{f_{\ellR}\big\}_{\ellR > 0}$ in some fixed class $\GF_{2,s}$ with $\phi(f_{\ellR}) \le \polyin(\ellR)$ so that for every $d$, $\proj_{x}(f_{\ellR})$ cannot be written as a short GF $h_{\ellR} \in \GF_{1,d}$ with $\phi(h_{\ellR}) \le \polyin(\ellR)$.
\end{cor}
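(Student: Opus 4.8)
The plan is to derive Corollary~\ref{cor:proj_long_strong} from Theorem~\ref{th:weaker_assumption} by observing that the obstruction to collapsing $\GH$ to $\Gzero$ already lives at the first level $\SigmaG_1$, which we can realize as a single projection of a two-variable short GF. First I would recall that Theorem~\ref{th:weaker_assumption} produces, under the hypothesis $\sharpP\not\subseteq\FPpo$, a language $\M\in\Ppo\sm\Gzero$, namely the language $\M$ defined in~\eqref{eq:M_lang} attached to an $\sharpP$-complete problem. The key point is that $\M\in\Ppo$, so by Theorem~\ref{th:Ppo_to_GF} (or rather the weaker fact that $\Ppo\subseteq\unique\PiG_1\subseteq\SigmaG_2$), the segments $\M_\ellR$ are represented via projections/complements/specializations of short GFs; but we want a cleaner witness, so instead I would directly exhibit $\M$ (or a closely related language) in $\SigmaG_1$.

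More concretely, the steps are as follows. Step one: fix the $\sharpP$-complete problem $\#\L$ with its polynomial-time verifier $M$, and set $\M$ as in~\eqref{eq:M_lang}. Step two: for each $\ellR$, encode $\M_{\ellR}$ as the $x$-projection of a two-variable short GF. Since $\M\in\Ppo$, Corollary~\ref{cor:Ppo_to_GF_3} gives a short GF $f_\ellR\in\GF_{3,5}$ with $\GFof(\M_\ellR;t)=\spec_x(B_\ellR\cpl\proj_{x,y}(f_\ellR))$, and since specialization and finite complement are polynomial-time operations preserving short-GF-ness (theorems~\ref{th:BPGF1} and~\ref{th:BPGF2}), we may absorb the complement and specialization and obtain, after the compression of Lemma~\ref{lem:compress} if needed, a sequence $\{g_\ellR\}$ in some fixed class $\GF_{2,s}$ with $\phi(g_\ellR)\le\polyin(\ellR)$ and $\proj_x(g_\ellR)=\GFof(\M_\ellR;t)$, i.e. $\M\in\Gzero$-via-one-projection, which is precisely $\SigmaG_1$. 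Step three: invoke Theorem~\ref{th:weaker_assumption}, which says $\M\notin\Gzero$ under the complexity hypothesis. Step four: conclude that the sequence $\{g_\ellR\}$ is the desired witness — if for some fixed $d$ each $\proj_x(g_\ellR)$ could be rewritten as an $h_\ellR\in\GF_{1,d}$ with $\phi(h_\ellR)\le\polyin(\ellR)$, then $\M\in\Gzero$ by Definition~\ref{def:Gzero}, contradicting Theorem~\ref{th:weaker_assumption}. Hence $\SigmaG_1\neq\Gzero$, so $\GH$ does not collapse to $\Gzero$.

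One subtlety I expect to be the main obstacle is the bookkeeping in Step two: the representation in Corollary~\ref{cor:Ppo_to_GF_3} is $\spec_x(B_\ellR\cpl\proj_{x,y}(f_\ellR))$, so to land in $\SigmaG_1$ (a single bare projection, not wrapped in a complement/specialization) I must argue carefully that the complement and specialization layers can be merged into the projected short GF without blowing up the length or the index. The cleanest route is probably: compute $B_\ellR\cpl\proj_{x,y}(f_\ellR)$ first only formally, note that $\proj_{x,y}(f_\ellR)$ itself is a short GF computed as a union (by~\eqref{eq:f_union}), and that taking complement inside a box and then projecting/specializing are all captured by Theorem~\ref{th:BPGF2} applied to fixed-index classes — so the net effect is a short GF in a fixed $\GF_{2,s}$ whose $x$-projection is $\GFof(\M_\ellR;t)$. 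Alternatively, and more robustly, I need not even produce a $\SigmaG_1$ witness: it suffices to observe that the full statement ``$\GH$ does not collapse to $\Gzero$'' follows immediately from $\Gzero\subsetneq\Ppo$ together with $\Ppo\subseteq\GH$ (indeed $\Ppo\subseteq\unique\PiG_1\subseteq\SigmaG_2\subseteq\GH$), and the explicit ``$f_\ellR\in\GF_{2,s}$'' reformulation is then extracted from whichever level of $\GH$ contains $\M$. I would present the argument in this second, safer order, and only afterward remark that in fact one projection already suffices.
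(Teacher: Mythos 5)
Your ``second, safer order'' is essentially the paper's proof, which consists of exactly two lines: recall $\Gzero \subseteq \Ppo \subseteq \GH$ and invoke Theorem~\ref{th:weaker_assumption}; the explicit witness in the ``in other words'' clause is left as the reader's extraction from the representation of $\M$ inside $\GH$. So your bottom line is fine. However, your preferred first route contains a genuine error that you correctly sense but do not resolve: you cannot ``absorb the complement and specialization'' to obtain $\proj_x(g_{\ellR}) = \GFof(\M_{\ellR};t)$. The finite complement in Corollary~\ref{cor:Ppo_to_GF_3} is applied \emph{after} the projection, i.e.\ the representation is $\spec_x\bigl(B_{\ellR}\cpl\proj_{x,y}(f_{\ellR})\bigr)$, and $B'\cpl\proj(f)$ is not $\proj(B\cpl f)$ --- this is precisely the $\exists$ versus $\forall$ distinction that makes the hierarchy nontrivial. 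Your absorption claim would prove $\Ppo\subseteq\SigmaG_1$, which the paper nowhere asserts (it only places $\Ppo$ in $\unique\PiG_1\subseteq\SigmaG_2$), and no cited lemma supports it.

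The correct extraction of the witness does not require $\M\in\SigmaG_1$ at all. Take the $f_{\ellR}\in\GF_{3,5}$ of Corollary~\ref{cor:Ppo_to_GF_3} and compress the pair $(x,y)$ into a single variable (keeping the third variable separate) via Lemma~\ref{lem:compress}a, so that $\proj_{x,y}$ becomes a one-variable projection of a member of a fixed $\GF_{2,s}$. The point is that the projected GF itself --- not $\GFof(\M_{\ellR};t)$ --- is the object that cannot be short: if $\proj_{x,y}(f_{\ellR})$ (equivalently, its compressed version, by Lemma~\ref{lem:compress}b) had a polynomial-length representation in some $\GF_{2,d}$, then the remaining operations $B_{\ellR}\cpl(\cdot)$ and $\spec_x(\cdot)$ are polynomial-time and index-preserving up to a constant by Theorems~\ref{th:BPGF1} and~\ref{th:BPGF2}, so they would produce a polynomial-length short GF for $\M_{\ellR}$, contradicting $\M\notin\Gzero$. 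This is exactly the mechanism used in the proof of Theorem~\ref{th:union_long}; you have all the ingredients, but the deduction must run ``short projection $\Rightarrow$ $\M\in\Gzero$'' through the cheap outer operations, not through an identification of $\M_{\ellR}$ with a bare projection.
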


\begin{proof}
Recall that $\Gzero \subseteq \Ppo \subseteq \GH$ (Section~\ref{sec:GF}).
Now this follows from Theorem~\ref{th:weaker_assumption}.
\end{proof}

\subsection{A partial converse}

One can ask if the above argument in the proof above can be reversed, i.e.,
if $\sharpP \subseteq \FPpo$, does it imply that $\GH$ collapses to $\Gzero$?
We present below a weaker result.

%%\begin{defi}
Recall from Section~\ref{sec:GF} that $\unique\PiG_{1}$ the subclass of $\SigmaG_{2}$ that uses only $\spec_{x}$ and $\cproj_{x,y}$.
In other words, $\L \in \unique\PiG_{1}$ if for every $\ellR > 0$, we have $\GFof(\L_{\ellR};t) = \spec_{x}(\cproj_{x,y}(f_{\ellR}))$
%% \begin{equation*}
%% \sum_{x\in\L_{\ellR}} t^{x} = \spec_{x}(\cproj_{x,y}(f_{\ellR}))
%% \end{equation*}
for some $f_{\ellR}$ in some fixed class $\GF_{3,s}$ with $\phi(f_{\ellR}) \le \polyin_{\L}(\ellR)$.
We also know that $\unique\PiG_{1} = \unique\Ppo$.
%%\end{defi}

\begin{prop}\label{prop:partial_converse}
If $\sharpP \subseteq \FPpo$, then $\GH$ collapses to $\unique\PiG_{1}$.
%%If $\,\GH$ does not collapse to $\unique\PiG_{1}$, then $\sharpP \not\subseteq \FPpo$.
\end{prop}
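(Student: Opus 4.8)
The plan is to show that the assumption $\sharpP \subseteq \FPpo$ collapses every level of the hierarchy $\GH$ down to $\unique\PiG_1$. By Theorem~\ref{th:GH_PHpo}, we have $\GH = \PH\po$ and $\SigmaP_k\po = \SigmaG_{k+1}$, so it suffices to argue that under this hypothesis the whole non-uniform polynomial hierarchy $\PH\po$ collapses into $\unique\Ppo = \unique\PiG_1$. First I would recall the classical (uniform) counting/collapse facts and transfer them to the non-uniform setting via advice: Toda's theorem gives $\PH \subseteq \Pp^{\sharpP}$, and the relativized/non-uniform analogue gives $\PH\po \subseteq \Ppo^{\sharpP}$ (one simply carries the advice string through Toda's construction, which is uniform in the oracle). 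If $\sharpP \subseteq \FPpo$, then a $\sharpP$ oracle can be replaced by polynomial-size advice computing the relevant $\sharpP$ functions, so $\Ppo^{\sharpP} \subseteq \Ppo^{\FPpo} = \Ppo$. Hence $\PH\po \subseteq \Ppo$, and combined with $\Ppo = \unique\Ppo = \unique\PiG_1$ (the last equality is Remark~\ref{rem:UP}, which also notes $\Ppo \subseteq \unique\Ppo$; the reverse inclusion is immediate since a unique accepting path is in particular an accepting path checkable in $\Ppo$), we conclude $\GH = \PH\po \subseteq \unique\PiG_1$, which is the claimed collapse.

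The key steps in order would be: (1) invoke Theorem~\ref{th:GH_PHpo} to restate the goal purely in complexity-class terms, namely $\PH\po$ collapses to $\unique\Ppo$; (2) state the non-uniform version of Toda's theorem $\PH\po \subseteq \Ppo^{\sharpP}$, noting that Toda's proof relativizes and adding a polynomial-length advice string to each input length does not affect the reduction; (3) use $\sharpP \subseteq \FPpo$ to absorb the $\sharpP$ oracle into advice, obtaining $\Ppo^{\sharpP} = \Ppo$ (here one observes that an oracle query to a $\sharpP$ function of polynomial-length input can be answered by a polynomial-size circuit given as extra advice, and polynomially many such queries on an input of length $\ellR$ use advice of total length $\polyin(\ellR)$); (4) combine with $\Ppo = \unique\Ppo = \unique\PiG_1$ from Remark~\ref{rem:UP} to finish.

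An alternative self-contained route, avoiding Toda entirely, would mimic the proof of Theorem~\ref{th:weaker_assumption} level by level: given $\L \in \SigmaG_{k+1} = \SigmaP_k\po$, the innermost projection membership is in $\Ppo$ by Proposition~\ref{prop:supp_proj_is_P}, and each successive quantifier alternation adds one level of the polynomial hierarchy; if $\sharpP \subseteq \FPpo$ then $\PH\po$ collapses (since $\PH \subseteq \Pp^{\PP}$ and $\PP \subseteq \FPpo$-style arguments, or more directly $\NP\po \subseteq \Ppo$ would follow from counting collapse), so all the alternating projections can be folded back into a single $\Ppo$ computation, hence into the $\unique\Ppo$-form representation $\spec_x(\cproj_{x,y}(f_\ellR))$ guaranteed by Theorem~\ref{th:Ppo_to_GF}. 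I would lean on the first route as cleaner.

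The main obstacle I expect is verifying carefully that the collapse is \emph{non-uniform} and lands specifically in $\unique\Ppo$ rather than merely in $\Ppo$ or some larger class. The subtlety is that $\unique\PiG_1$ requires a \emph{unique} certificate $y$ for each accepted $x$, and one must check that the representation produced by Theorem~\ref{th:Ppo_to_GF} (where $y$ encodes the uniquely-determined values of non-input gates of the circuit $C_\ellR$) genuinely has this uniqueness property once $C_\ellR$ is taken to be the polynomial-size circuit for $\L_\ellR \in \Ppo$. This is handled in Remark~\ref{rem:UP}, so the real work is just citing it correctly and making sure the advice bookkeeping (total advice length $\polyin_\L(\ellR)$ across the $\sharpP$-to-advice conversion and the Toda reduction) stays polynomial; both are routine once the structure is laid out, but they are the points a referee would scrutinize.
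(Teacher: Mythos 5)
Your proposal is correct and follows essentially the same route as the paper: reduce via Theorem~\ref{th:GH_PHpo} and Remark~\ref{rem:UP} to showing $\PH\po$ collapses to $\Ppo$, then apply Toda's theorem and absorb the $\sharpP$ (equivalently $\#\textsc{SAT}$) oracle into polynomial-size advice. The only nitpick is that your parenthetical claim that $\unique\Ppo\subseteq\Ppo$ is ``immediate'' is not (that inclusion is open in general, though it does follow under the hypothesis); fortunately the argument only needs $\Ppo\subseteq\unique\PiG_{1}\subseteq\GH$, so nothing is lost.
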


%%In other words, if $\ts\unique\PiG_{1} \subsetneq\GH$, then $\sharpP \not\subseteq \FPpo$.

\begin{proof}
%%Recall that $\unique\PiG_{1}$ is the subclass of $\SigmaG_{2}$ which uses only $\spec_{x}$ and $\cproj_{x,y}$.
Since $\GH = \PH\po$ and $\unique\PiG_{1} = \unique\Ppo$, it equivalent to show $\PH\po = \unique\Ppo$.
In fact, we have a stronger collapse, namely $\PH\po = \Ppo$.
This follows easily from \emph{Toda's theorem} (see e.g.~\cite[Sec.\ 9.3]{AB}).
%% Assume $\sharpP \subseteq \FPpo$, i.e., every counting problem has polynomial size circuits.
Indeed, by Toda's theorem , we have $\PH \subseteq \poly^{\textsc{\#SAT}}$.
Replacing the $\textsc{\#SAT}$ oracle by polynomial size circuits, we have $\PH \subseteq \poly^{\Ppo} = \Ppo$.
Taking the non-uniform version of $\PH$, we still have $\PH\po \subseteq \Ppo$.
%% We showed in Theorem~\ref{th:GH_PHpo} that $\PH\po = \GH$.
%% So $\GH$ collapses to $\Ppo \subseteq \unique\Ppo = \unique\PiG_{1}$.
%% From Corollary~\ref{cor:Ppo_to_GF_3}, we also have $\Ppo \subseteq \unique\PiG_{1}$.
%% Together, they imply $\GH = \PH\po  \subseteq \Ppo \subseteq \unique\PiG_{1}$.
\end{proof}

\begin{rem}\label{rem:hard}
The proposition implies that proving $\GH$ does not collapse to between its 1st
and 2nd levels is at least as hard as showing $\sharpP \not\subseteq \FPpo$.
However, there might still be hope of showing that $\textsf{GH}$ does not
collapse to its $0$th level~$\Gzero$, e.g., by proving
Conjecture~\ref{conj:squares-intro}.
%
% exhibiting some short GFs with no polynomial length projections.
\end{rem}

\begin{rem}\label{rem:strong_collapse}
We do not claim that Proposition~\ref{prop:partial_converse}
is a new collapse result assuming $\sharpP \subseteq \FP\po$.
Here we are only putting things in the context of short GFs.
Observe that $\sharpP \subseteq \FP\po$ implies $\NP \subseteq \Ppo$.
In turn, $\NP \subseteq \Ppo$ implies $\PH = \textsf{S}_{2}^{\poly}$ (see~\cite{C}),
which is the strongest collapse currently known, assuming $\NP \subseteq \Ppo$.
Note that the classical \emph{Karp--Lipton theorem} (see e.g.~\cite{AB,MM,Pap}), says
that $\NP \subseteq \Ppo$ implies $\PH = \SigmaP_{2}$,
which is weaker because $\textsf{S}_{2}^{\poly} \subseteq \SigmaP_{2} \cap \PiP_{2}$.
\end{rem}

\bigskip

\section{Intersections, unions and Minkowski sums of short GFs}  \label{sec:int-unions}

\subsection{Proof of Theorem~\ref{th:main_2}}

Below is the precise statement of Theorem~\ref{th:main_2}.

\begin{theo}\label{th:union_long}
Assume $\sharpP \not\subseteq \FPpo$.
Then there is an $s > 0$ and a family of finite subsets $\big\{ S_{\ellR} \big\}_{\ellR > 0}$ with each $S_{\ellR} = \{p_{\ellR,1}, \dots, p_{\ellR, k_{\ellR}}\} \subset \GF_{1,s}$ so than the following hold:
\noindent
\begin{itemize}[leftmargin = 2em]
\item[\textup{a)}]
The total length of all $\,p_{\ellR,i}$ in $S_{\ellR}$ is $\polyin(\ellR)$.

\item[\textup{b)}] For every fixed $d$, the intersection/union of all $\,p_{\ellR,i}$ in $S_{\ellR}$ cannot be written as a short GF $h_{\ellR} \in \GF_{2,d}$ with $\phi(h_{\ellR}) \le \polyin(\ellR)$.
\end{itemize}
\end{theo}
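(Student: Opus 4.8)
The plan is to derive Theorem~\ref{th:union_long} from Theorem~\ref{th:weaker_assumption} (equivalently Corollary~\ref{cor:proj_long_strong}) by combining it with the structural decomposition already proved in Theorem~\ref{th:Ppo_to_GF}, specifically equation~\eqref{eq:f_union}. The point is that a ``hard'' projection of a short GF has, by~\eqref{eq:f_union}, been written as a union of polynomially many short GFs $p_{\ellR,i}$ in a fixed class $\GF_{2,s}$; so if that union could be collapsed back to a single short GF of polynomial length and bounded index, we would contradict the fact that the projection is long.

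\smallskip

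First I would take the language $\M \in \Ppo$ constructed in the proof of Theorem~\ref{th:weaker_assumption} (built from an $\sharpP$-complete problem), which under the assumption $\sharpP \not\subseteq \FPpo$ satisfies $\M \notin \Gzero$. Since $\M \in \Ppo$, I invoke Theorem~\ref{th:Ppo_to_GF}: for each $\ellR$ there is a short GF $f_{\ellR} \in \GF_{5,5}$ of polynomial length, a box $B_{\ellR}$, and short GFs $p_{\ellR,1},\dots,p_{\ellR,k_{\ellR}} \in \GF_{2,s}$, each of length $\polyin_{\L}(\ellR)$ with $k_{\ellR} \le \polyin_{\L}(\ellR)$, so that $\proj_{x,y}(f_{\ellR}) = p_{\ellR,1} \cup \dots \cup p_{\ellR,k_{\ellR}}$. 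I set $S_{\ellR} = \{p_{\ellR,1},\dots,p_{\ellR,k_{\ellR}}\}$; part a) of the theorem is then immediate since each $p_{\ellR,i}$ has length $\polyin(\ellR)$ and there are only $\polyin(\ellR)$ of them.

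\smallskip

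For part b), I argue by contradiction. Suppose for some fixed $d$ the union $h_{\ellR} := p_{\ellR,1} \cup \dots \cup p_{\ellR,k_{\ellR}} = \proj_{x,y}(f_{\ellR})$ could be written as a short GF in $\GF_{2,d}$ with $\phi(h_{\ellR}) \le \polyin(\ellR)$. Then $h_{\ellR}$ is a bivariate short GF of polynomial length and bounded index whose support, by~\eqref{eq:Ppo_GF}, yields $\GFof(\M_{\ellR};t)$ after the ``inexpensive'' operations $\spec_x$ and $B_{\ellR}\cpl(\cdot)$: concretely $\GFof(\M_{\ellR};t) = \spec_x(B_{\ellR}\cpl h_{\ellR})$. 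By theorems~\ref{th:BPGF1} and~\ref{th:BPGF2}, computing the finite complement within $B_{\ellR}$ and the $x$-specialization from $h_{\ellR}$ takes time $\polyin(\phi(h_{\ellR}) + \phi(B_{\ellR})) \le \polyin(\ellR)$, and produces a \emph{univariate} short GF for $\M_{\ellR}$ in a fixed class $\GF_{1,s'}$ with $s'$ depending only on $d$ and $s$, of length $\polyin(\ellR)$. This exhibits $\M \in \Gzero$, contradicting $\M \notin \Gzero$. The intersection case follows the same way using De~Morgan: an intersection of the $p_{\ellR,i}$ can be obtained from their complements within $B_{\ellR}$ and a single union (all done in polynomial time with bounded index by Theorem~\ref{th:BPGF2}), so a polynomial-length bounded-index short GF for the intersection would again yield one for the union, hence for $\M_{\ellR}$, the same contradiction. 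A variant of the whole argument with Conjecture~\ref{conj:squares-intro} in place of $\sharpP\not\subseteq\FPpo$ works identically, starting from $\textsc{SQUARES} \in \poly$ via Example~\ref{eg:squares} instead of from $\M$.

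\smallskip

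The main obstacle is bookkeeping the index: one must check that assembling $\GFof(\M_{\ellR};t)$ from $h_{\ellR}$ through complement, union (in the intersection case), and specialization only inflates the index by an \emph{absolute} constant — i.e.\ the final class $\GF_{1,s'}$ is fixed independent of $\ellR$ — since the definition of $\Gzero$ crucially requires a single fixed $s$. This is exactly what the ``index additivity'' clause of Theorem~\ref{th:BPGF2} ($h,k,p \in \GF_{m,s_1+s_2}$) and the index-preservation in Theorem~\ref{th:BPGF1} guarantee, so the argument closes, but the quantifier structure (``there is an $s$ so that for every $\ellR$\dots'' versus ``for every fixed $d$\dots'') must be tracked with care throughout.
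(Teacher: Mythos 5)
Your argument follows the paper's proof almost exactly: same witness language from Theorem~\ref{th:weaker_assumption}, same use of the decomposition~\eqref{eq:f_union} from Theorem~\ref{th:Ppo_to_GF}, same contradiction via complement and specialization, and the same De~Morgan reduction for intersections. However, there is one concrete gap: the theorem asserts $S_{\ellR} \subset \GF_{1,s}$, i.e.\ the $p_{\ellR,i}$ must be \emph{single-variable} short GFs, whereas Theorem~\ref{th:Ppo_to_GF} only delivers $p_{\ellR,i} \in \GF_{2,s}$ (they are GFs in the two variables $t,u$ corresponding to $x$ and $y$). As written, your family $S_{\ellR}$ consists of bivariate GFs, so you have proved a weaker statement than the one claimed; and the univariate requirement is not cosmetic, since it is exactly what the proof of Theorem~\ref{th:Minkowski_long} later relies on when it forms $a_{\ellR}(t,u)=\sum_i p_{\ellR,i}(t)\ts u^i$.

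The paper closes this gap with Lemma~\ref{lem:compress}: apply part a) to compress each $p_{\ellR,i}$ into a univariate $\wt p_{\ellR,i} \in \GF_{1,s}$ of polynomial length, and take $\wt S_{\ellR} = \{\wt p_{\ellR,1},\dots,\wt p_{\ellR,k_{\ellR}}\}$. Condition a) is preserved, and for condition b) one observes that the compression map $\tau_N$ is injective on the relevant box, hence preserves unions and intersections; so if the union of the $\wt p_{\ellR,i}$ admitted a polynomial-length bounded-index short GF, part b) of Lemma~\ref{lem:compress} (decompression) would produce a polynomial-length bounded-index short GF for the union of the original $p_{\ellR,i}$, which your contradiction argument already rules out. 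With this additional step your proof is complete; the rest of your index bookkeeping (via theorems~\ref{th:BPGF1} and~\ref{th:BPGF2}) is sound.
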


\begin{proof}
By Theorem~\ref{th:weaker_assumption}, there exists a language $\L \in \Ppo$ which is outside of $\Gzero$.
By Theorem~\ref{th:Ppo_to_GF},
%Corollary~\ref{cor:Ppo_to_GF_3},
for every $\ellR > 0$, we can represent:
\begin{equation*}
%%  \sum_{x \in \L_{\ellR}} t^{x}
\GFof(\L_{\ellR}; t) \, =  \, \spec_{x}(B_{\ellR} \cpl \proj_{x,y}(f_{\ellR}) ) \quad \text{and} \quad \proj_{x,y}(f_{\ellR}) \; = \; p_{\ellR, 1} \cup \dots \cup p_{\ellR,k_{\ellR}},
\end{equation*}
where $f_{\ellR} \in \GF_{5,5},\; p_{\ellR,i} \in \GF_{2,s}$ and $\phi(B_{\ellR}),\, \phi(f_{\ellR}),\, \sum \phi(p_{\ellR,i}) \le \polyin(\ellR)$.
Here $s$ is some universal constant.

Let $S_{\ellR} = \{p_{\ellR,1},\dots,p_{\ellR,k_{\ellR}}\}$.
This family $\big\{S_{\ellR}\big\}$ satisfies condition~a).
We show that the union of $p_{\ellR,i}$ cannot be written as a short GF of length $\polyin(\ellR)$.
Indeed, assume there is $d$ for which we can write $\proj_{x,y}(f_{\ellR}) = p_{\ellR, 1} \cup \dots \cup p_{\ellR,k_{\ellR}}\,$ as $h_{\ellR} \in \GF_{2,d}$ with $\phi(h_{\ellR}) \le \polyin(\ellR)$.
By Theorem~\ref{th:BPGF2}, the complement $B_{\ellR} \cpl h_{\ellR}$ can be written as a short GF  $g_{\ellR} \in \GF_{2,2d}$ of length $\polyin(\ellR)$.
Taking the specialization $\spec_{x}(g_{\ellR})$, we still have a short GF in $\GF_{2,2d}$ of length $\polyin(\ellR)$, which represents $\L_{\ellR}$.
Since this holds for all $\ellR > 0$, we have $\L \in \Gzero$, a contradiction.
So the family $\big\{S_{\ellR}\big\}$ also satisfies b).

Note that each $p_{\ellR,i}$ still has $2$ variables $x,y$.
By Lemma~\ref{lem:compress} part a), we can compress each $p_{\ellR,i}$ into a single variable short GF $\,\wt p_{\ellR,i} \in \GF_{1,s}$ of polynomial length.
Then the new subsets $\wt S_{\ellR} = \{ \wt p_{\ellR,1}, \dots, \wt p_{\ellR,k_{\ellR}}\} \subset \GF_{1,s}$ still satisfy condition a).
We show they still satisfy condition~b).
Indeed, note that compressing/decompression preserves intersection and union.
So if $\,\wt p_{\ellR,i}\,$ has a polynomial length union then Lemma~\ref{lem:compress} part b) allows us the decompress it into a polynomial length union of $p_{\ellR,i}$.
This completes the proof for the case of union.
The case of intersection follows by taking complements of $p_{\ellR,i}$.
\end{proof}

\subsection{Proof of Theorem \ref{th:main_3}}
%%We prove Theorem~\ref{th:main_3} under the weaker assumption in Conjecture~\ref{conj:some_not_short}.

\begin{defi}
  Given two GFs $a = \GFof(S_{1}; \t)$ and $b = \GFof(S_{2}; \t)$ with $S_{1},S_{2} \subseteq \N^{n}$,  the \emph{Minkowski sum} $a \oplus b$ is $\GFof(S_{1}\oplus S_{2};\t)$,
  %$a(\t)$ and $b(\t)$ is the unique GF $c(\t)$ that satisfies
%% \begin{equation*}
%% \supp(c) = \supp(a) \oplus \supp(b),
%% \end{equation*}
where $S_{1} \oplus S_{2}$ is the usual  Minkowski sum of two point sets.
\end{defi}

\begin{eg}
Given $\b = (b_{1},\dots,b_{n}) \in \N^{n}$, the semigroup $\N \langle b_{1}, \dots, b_{n} \rangle$
consists of all non-negative integer combinations of the $b_{j}$'s. Its generating function is given by:
$$
f_{\b}(t) \, = \, \frac{1}{1-t^{b_{1}}} \. \oplus \, \dots \, \oplus \. \frac{1}{1-t^{b_{n}}}.
$$
Given such $\b \in \N^{n}$ and $a \in \N$, the $\textsc{KNAPSACK}$ problem asks if $a \in \supp(f_{\b})$.
\end{eg}

Below is the precise statement of Theorem~\ref{th:main_3}.

\begin{theo}\label{th:Minkowski_long}
Assume $\sharpP \not\subseteq \FPpo$.
Then there is an $s > 0$ and two sequences $\{a_{\ellR}\}_{\ellR > 0},\, \{b_{\ellR}\}_{\ellR > 0} \subset \GF_{1,s}$ such that
\begin{itemize}[leftmargin = 2em]
\item[\textup{a)}]
$\phi(a_{\ellR}) + \phi(b_{\ellR}) \le \polyin(\ellR)$.

\item[\textup{b)}] For every fixed $d$, the Minkowski sum $\,a_{\ellR} \oplus b_{\ellR}\,$ cannot be written as a short GF $h_{\ellR}$ in $\GF_{1,d}$ of length $\phi(h_{\ellR}) \le \polyin(\ellR)$.
\end{itemize}
\end{theo}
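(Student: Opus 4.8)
The plan is to reduce this to Theorem~\ref{th:Minkowski_long}'s hypothesis via the same ``$\#P$-hardness of counting certificates'' strategy used in the proof of Theorem~\ref{th:weaker_assumption}, but now encoding the \emph{counting} into a Minkowski sum rather than a projection. As before, let $\#\L$ be an $\sharpP$-complete problem with associated polynomial-time machine $M$, and let $\M = \{(\wt x,\wt c) : \text{length}(\wt x) = \text{length}(\wt c),\ M(\wt x,\wt c)=1\} \in \Ppo$. By Theorem~\ref{th:weaker_assumption} (or rather its proof), $\M \notin \Gzero$, so for every fixed $s$ and every $d$, $\GFof(\M_{2\ellR};t)$ has no representation as a short GF in $\GF_{1,d}$ of length $\polyin(\ellR)$. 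The key new ingredient is to realize $\GFof(\M_{2\ellR};t)$ — or something from which it is cheaply recoverable — as a Minkowski sum $a_{\ellR}\oplus b_{\ellR}$ of two short GFs of polynomial length.

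First I would use Corollary~\ref{cor:Ppo_to_GF_3}: since $\M \in \poly \subseteq \Ppo$, we get $\GFof(\M_{2\ellR};t) = \spec_x(B_{2\ellR}\cpl\proj_{x,y}(f_{2\ellR}))$ with $f_{2\ellR}\in\GF_{3,5}$ of polynomial length. The obstacle is that a Minkowski sum of \emph{two} GFs is a weaker operation than a projection — it does not obviously let us simulate an existential quantifier over an unbounded-range certificate. So the real work is to find the right pair $a_{\ellR}, b_{\ellR}$. The natural idea: pick $b_{\ellR}$ to be a ``spreading'' GF like $\sum_{0\le c<2^{\ellR}} t^{2^{\ellR} c}$ (which is short, $=\frac{1-t^{2^{2\ellR}}}{1-t^{2^{\ellR}}}\in\GF_{1,1}$), and $a_{\ellR}$ to encode, in its low bits, exactly which $\wt x$ have at least one certificate — but that already requires $a_{\ellR}$ to be short, which we do not know. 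Instead, I would encode in $a_{\ellR}$ a set whose Minkowski sum with $b_{\ellR}$ \emph{counts} certificates via multiplicity: take $a_{\ellR}$ supported on $\{x + 2^{\ellR}c : M(\wt x,\wt c)=1\}$ so that $a_{\ellR}\oplus(\text{singleton }\{0\})$ is that set, but $a_{\ellR}$ itself is $\GFof(\M_{2\ellR};t)$, which we cannot assume short. The resolution is to build $a_{\ellR}$ and $b_{\ellR}$ so that membership $n\in\supp(a_{\ellR}\oplus b_{\ellR})$ is a \textsc{KNAPSACK}-like condition that is \emph{still} $\#P$-hard to enumerate: e.g.\ let $a_{\ellR}$ range over instances $x$ placed at positions $x$, and $b_{\ellR}$ range over certificates $c$ placed at positions $2^{\ellR}c$ but \emph{filtered} — actually, following Theorem~\ref{th:main_3}'s proof, the trick is presumably to take $a_{\ellR},b_{\ellR}$ as short GFs for two \emph{polyhedral} pieces whose Minkowski sum is the projection of a polytope, invoking that a projection (an existential quantifier) is exactly a Minkowski sum with a ``full box'' in the quantified coordinates, then quotienting/specializing to kill those coordinates.

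The cleanest route: recall that $\exists\z\in[0,2^q)^3\,\Phi(x,y,\z)$ over a quantifier-free $\Phi$ decomposes (Proposition~\ref{prop:W}) into polynomially many polytopes $P_i\subseteq\rr^5$, and the projection of $P_i\cap\N^5$ onto $(x,y)$ equals $\big(P_i\cap\N^5\big)\oplus(\{0\}^2\times[\text{range}])$ projected down — i.e.\ each individual projection \emph{is} essentially a Minkowski sum with an interval box in the $\z$-coordinates followed by forgetting those coordinates, and forgetting coordinates of a Minkowski sum of GFs supported in a box can be done by the $\tau_N$-compression of Lemma~\ref{lem:compress} combined with substituting $1$ for the relevant variables. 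So I would: (1) take the single polytope case (or reduce the $\#P$-hard counting to one polytope, as in the \textsc{\#3SAT} encoding where $\Phi$ is already a conjunction); (2) write $\proj_{x,y}(P\cap\N^5) = \proj_{x,y}\big((P\cap\N^5) \oplus \{(0,0,0,0,0)\}\big)$ trivially, and more usefully express the $\z$-existential as $a\oplus b$ where $a = \GFof(\{(x,y,\z):(x,y,\z)\in P\cap\N^5\};t,u,\v)$ with $\v$-variables specialized/compressed away and $b$ the box GF — but since we need \emph{two} genuine short GFs in \emph{one} variable, use Lemma~\ref{lem:compress} to compress the $(x,y,\z)$-tuple down to one variable $t$ while recording the box sizes, making $a_{\ellR},b_{\ellR}\in\GF_{1,s}$; (3) verify that $a_{\ellR}\oplus b_{\ellR}$, once decompressed and projected, yields $\GFof(\M_{2\ellR};t)$ up to the cheap operations $\spec$ and complement, which by Theorem~\ref{th:BPGF1}--\ref{th:BPGF2} cost only polynomial time and bounded index. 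Then if $a_{\ellR}\oplus b_{\ellR}$ were short in $\GF_{1,d}$, undoing $\tau_N$ (Lemma~\ref{lem:compress}b), projecting (free, since the $\z$-coordinates are already compressed into $t$, the projection is just a substitution $u\gets1$ by Theorem~\ref{th:BPGF1}), complementing within $B_{2\ellR}$, and specializing would give $\GFof(\M_{2\ellR};t)$ as a short GF of polynomial length in a fixed class $\GF_{1,d'}$, contradicting $\M\notin\Gzero$. The main obstacle I anticipate is exactly the bookkeeping in step (2): arranging the compression so that a \emph{single} Minkowski sum of two one-variable GFs reproduces the existential quantifier over $\z\in[0,2^q)^3$ — because a Minkowski sum adds supports, one must choose the two summands' supports so their sumset is precisely $\{\tau_N(x,y,\z) : (x,y,\z)\in P\cap\N^5\}$-projected, which forces a careful choice of which coordinates go into $a$ versus $b$ and a verification that the sumset has no unwanted collisions outside the box; handling potential multiplicity (Minkowski sum of GFs is a \emph{set} sumset, so multiplicities collapse, which is actually what we want for the support but must be checked against the need to still \emph{count}) is the delicate point.
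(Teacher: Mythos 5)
Your overall architecture (reduce to the non-shortness of a projection already established via $\M\notin\Gzero$, and simulate the existential quantifier by a Minkowski sum with an interval) is headed in a workable direction, but the crucial step is missing and, as written, wrong. The problem is how you get from $a_{\ellR}\oplus b_{\ellR}$ back to the projection. You describe this step as ``forgetting those coordinates'' and later as ``projecting (free, \dots the projection is just a substitution $u\gets 1$ by Theorem~\ref{th:BPGF1}).'' Substitution of $1$ into a short GF $\sum \t^{\x}\u^{\y}$ yields the short \emph{power series} $\sum_{\x}\#\{\y:(\x,\y)\in\supp(f)\}\,\t^{\x}$, whose coefficients are generally $\ge 2$; it does not yield a short GF for the projection, and it cannot, since otherwise Corollary~\ref{cor:proj_long_strong} would be false. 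Moreover, Minkowski-summing with a box in the $\z$-coordinates and then genuinely projecting is circular: the box changes nothing about the $(x,y)$-projection, so the hard operation is still the projection. What actually makes the interval trick work is that you must extract the \emph{top slice} of the sumset: if $b$ is supported on $\{0\}\times[0,R)$ in an auxiliary coordinate and $a$ has that coordinate ranging in $[0,R)$, then the slice of $a\oplus b$ at value exactly $R-1$ (pulled out by a Hadamard product with a short GF concentrated on that slice, which is legal by Theorem~\ref{th:BPGF2} and Remark~\ref{rem:Hadamard}) consists precisely of the points for which \emph{some} value of the auxiliary coordinate occurs --- and on that slice the fiber is a singleton, so specialization $u\gets 1$ is then harmless. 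Without this slice extraction your chain either still contains a real projection or produces multiplicities, and the contradiction with $\M\notin\Gzero$ does not go through. You flag ``collisions'' and ``multiplicity'' as the delicate points, but the actual issue is this one, and it is not resolved in your write-up.

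For comparison, the paper does not attack the quantifier $\exists\z$ directly. It first proves Theorem~\ref{th:union_long} (the union $h_{\ellR}=p_{\ellR,1}\cup\dots\cup p_{\ellR,k_{\ellR}}$ of polynomially many short GFs is not short), and then sets $a_{\ellR}(t,u)=\sum_{i=1}^{k_{\ellR}}p_{\ellR,i}(t)u^{i}$ and $b_{\ellR}(t,u)=\sum_{j=0}^{k_{\ellR}-1}u^{j}$, so that the $u^{k_{\ellR}}$-slice of $a_{\ellR}\oplus b_{\ellR}$ is exactly $h_{\ellR}$; the slice is extracted by a Hadamard product with $u^{k_{\ellR}}/(1-t)$ followed by $u\gets 1$, and only then is Lemma~\ref{lem:compress} used to get down to one variable. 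This is the same ``interval plus top slice'' mechanism you are reaching for, applied to a polynomially long index interval rather than to the exponentially long range of $\z$; your variant could be repaired along the same lines, but you would still need to make the slice-extraction step explicit and check that compression does not introduce carries across the digit blocks.
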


\begin{proof}
By Theorem~\ref{th:union_long}, there exists an $s > 0$, and for each $\ellR$ a subset
\[
S_{\ellR} = \{p_{\ellR,1},\dots,p_{\ellR,k_{\ellR}}\} \subset \GF_{1,s} \quad \text{with} \quad \sum \phi(p_{\ellR,i}) \le \polyin(\ellR)
\]
with the following property.
For every fixed $d$, the union $h_{\ellR} = p_{\ellR,i} \cup \dots \cup p_{\ellR,k_{\ellR}}$ cannot be written as a short GF of length $\polyin(\ellR)$ in $\GF_{1,d}$.
Define
\begin{equation}\label{eq:a_def}
a_{\ellR}(t,u) \,=\, \sum_{i=1}^{k_{\ellR}} p_{\ellR,i}(t) \. u^{i} \,\in\, \GF_{2,s}.
\end{equation}
and
\begin{equation}\label{eq:b_def}
b_{\ellR}(t,u) \,=\, \sum_{i=0}^{k_{\ellR}-1} t^{0} u^{i} = \frac{1 - u^{k_{\ellR}}}{1-u} \,\in\, \GF_{1,1} \subset \GF_{2,s}.
\end{equation}
Since $\sum \phi(p_{\ellR,i}) \le \polyin(\ellR)$, we also have $\phi(a_{\ellR}) + \phi(b_{\ellR}) \le \polyin(\ellR)$.
\smallskip

Consider the terms $t^{x} u^{k_{\ellR}}$ in the Minkowski sum $a_{\ellR} \oplus b_{\ellR}$. From~\eqref{eq:a_def} and~\eqref{eq:b_def}, we have:
\begin{equation*}
\big\{x : (x,k_{\ellR}) \in \supp(a_{\ellR} \oplus b_{\ellR}) \big\} \, = \, \bigcup_{i=1}^{k_{\ellR}} \supp(p_{\ellR,i})  \, = \, \supp(h_{\ellR}).
\end{equation*}
In other words, we have $[u^{k_{\ellR}}] (a_{\ellR} \oplus b_{\ellR}) (t,u) = h_{\ellR}(t)$.
Define
\begin{equation*}
g_{\ellR}(t,u) = \sum_{x \in \N} t^{x} u^{k_{\ellR}} =  \frac{u^{k_{\ellR}}}{1-t}.
\end{equation*}
Taking the intersection of $g_{\ellR}$ with $a_{\ellR} \oplus b_{\ellR}$, we get:
\begin{equation}\label{eq:extract_g}
\big[(a_{\ellR} \oplus b_{\ellR}) \star g_{\ellR}\big] (t,u) = u^{k_{\ellR}} \. h_{\ellR}(t).
\end{equation}

Now assume there is $d$ so that $a_{\ellR} \oplus b_{\ellR}$ can be written as $c_{\ellR} \in \GF_{2,d}$ with $\phi(c_{\ellR}) \le \polyin(\ellR)$.
By Theorem~\ref{th:BPGF2}, we can compute $h_{\ellR}$ by taking the Hadamard product $c_{\ellR} \star g_{\ellR}$ and substitute $u \gets 1$ in~\eqref{eq:extract_g}.
This would imply that $h_{\ellR}$ is a short GF of length $\polyin(\ellR)$ in the fixed class $\GF_{1,d+1}$, which contradicts our first statement on $h_{\ellR}$.

So the two sequences $\{a_{\ellR}\}_{\ellR > 0}$ and $\{b_{\ellR}\}_{\ellR > 0} \subset \GF_{2,s}$ do not have Minkowski sums of polynomial lengths.
Note that each $a_{\ellR}$ and $b_{\ellR}$ still has two variables.
By Lemma~\ref{lem:compress} part a), we can compress $a_{\ellR}, b_{\ellR}$ into single variable short GFs $\wt a_{\ellR}, \wt b_{\ellR} \in \GF_{1,s}$.
Note that compressing/decompression preserves Minkowski sum.
So $\wt a_{\ellR} \oplus \wt b_{\ellR}$ does not have polynomial length, because otherwise we can decompress it to get $a_{\ellR} \oplus b_{\ellR}$ of polynomial length.
\end{proof}

\bigskip

\section{Squares, primes, and short GFs} \label{sec:squares-primes}

\subsection{Short GFs and squares}
Recall the definition of the class $\Gzero$ from Section~\ref{sec:GF}.
We present a candidate for a language $\L \in \Ppo$ which is outside of $\Gzero$.
Let $\textsc{SQUARES}$ be the language consisting of all square numbers written in binary.
Then
\begin{equation}\label{eq:squares_l}
\textsc{SQUARES}_{\ellR} = \{k^{2} :\; k^{2} < 2^{\ellR}\}.\footnote{Strictly speaking, some numbers in $\textsc{SQUARES}_{\ellR}$ have less than $\ellR$ digits. However, we can always pad them with enough zeroes form a set of strings of the same length.}
\end{equation}

\begin{conj}\label{conj:squares_long}
$\textsc{SQUARES}$ is not in $\Gzero$.
\end{conj}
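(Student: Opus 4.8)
The plan is to argue directly, without any complexity hypothesis, by playing the rigidity of a bounded‑index short GF against the arithmetic of the squares. Suppose for contradiction that $\textsc{SQUARES}\in\Gzero$: there is a fixed $s$ so that $\vt_\ellR(t)=f_\ellR(t)$ for every $\ellR$, where $f_\ellR\in\GF_{1,s}$ and $\phi(f_\ellR)\le\polyin(\ellR)$. Write $f_\ellR=\sum_{i=1}^{M}c_i\,t^{a_i}\big/\prod_{j=1}^{k_i}(1-t^{b_{ij}})$; after the usual normalization we may take $b_{ij}>0$ and $a_i\ge 0$, with $M$, $\phi(c_i)$, $a_i$ and $b_{ij}$ all bounded by $2^{\polyin(\ellR)}$. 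Put $A=\max_i a_i$ and let $L$ be the least common multiple of all the $b_{ij}$. For $n\ge A$ one has $[t^n]f_\ellR=\sum_i c_i\,Q_i(n-a_i)$, where each $Q_i$ is a quasi‑polynomial of degree $k_i-1\le s-1$ and period dividing $L$; hence on the range $n\ge A$ the sequence $[t^n]\vt_\ellR(t)$ coincides with a single quasi‑polynomial of degree $\le s-1$ and period $L$. Since $\vt_\ellR$ has finite support this quasi‑polynomial is $0$ for all $n>2^\ellR$, so it is $0$ at infinitely many points of each residue class modulo $L$ and hence identically $0$, while $[t^{2^\ellR}]\vt_\ellR=1$; thus $A>2^\ellR$. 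More generally, sorting the shifts $a_{(1)}\le\dots\le a_{(M)}$, on every interval $[a_{(\nu)},a_{(\nu+1)})$ the coefficient sequence of $\vt_\ellR$ equals a single quasi‑polynomial of degree $\le s-1$ and period $L$.

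Were $L$ not too large — say bounded by $2^{o(\ellR)}$, which is very conservative — the argument should close. For $n$ past a polynomial threshold, the two consecutive squares nearest to $n$ are $\Theta(\sqrt n)$ apart, so the open gap between them contains at least $\sqrt n/L>s$ integers lying in the residue class modulo $L$ of one of those squares, and all of these are non‑squares; on the interval $[a_{(\nu)},a_{(\nu+1)})$ containing such a gap, the coefficient sequence of $\vt_\ellR$ restricted to that class is a polynomial of degree $\le s-1$ with at least $s$ zeros, hence identically zero, which contradicts its taking the value $1$ at the neighbouring square whenever that square also falls in the interval. Chasing this over all residue classes, every interval that lies past the threshold must have length $O(2^{\ellR/2})$; but $\phi(f_\ellR)\le\polyin(\ellR)$ bounds $M$, so there are only $\polyin(\ellR)$ intervals, and together they cannot cover $[0,2^\ellR)$ — a contradiction. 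The real task is to run this argument with no a priori bound on $L$.

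The genuine obstacle — and the reason this is stated only as a conjecture — is that the rigidity above \emph{degenerates once $L$ is large}: since a single modulus $b_{ij}$ may already have bit length exceeding $\ellR$, the period $L$ can be as large as $2^{\polyin(\ellR)}$, hence far larger than $\deg\vt_\ellR=2^\ellR$, and then each residue class modulo $L$ meets $[0,2^\ellR)$ in at most one point, so a period‑$L$, degree‑$O(1)$ quasi‑polynomial imposes essentially no constraint there. Thus no argument resting on rigidity alone can succeed; one must exploit, simultaneously, the \emph{arithmetic} of the squares — via the Chinese Remainder Theorem applied to $m\mapsto m^2\bmod L$ together with Gauss‑sum estimates, which should show that the squares confined to a single residue class modulo $L$ are no more regular than the full square set, \emph{uniformly} over all admissible $L\le 2^{\polyin(\ellR)}$ — and the \emph{economy} constraint $\phi(f_\ellR)\le\polyin(\ellR)$ that caps the total description length. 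Finding the right way to marry these two ingredients is exactly the step that the conditional Theorem~\ref{t:squares-sharp} sidesteps by routing everything through $\sharpP$ versus $\FPpo$, and I expect it to be the true difficulty. A logically valid but, with present techniques, hopeless alternative is to prove $\sharpP\not\subseteq\FPpo$ outright and quote that theorem.
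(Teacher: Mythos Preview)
The statement is a \emph{conjecture} in the paper; there is no proof to compare against. The paper offers only conditional evidence (Theorems~\ref{th:squares-factoring} and~\ref{t:squares-sharp}) and records stronger conjectures (Conjectures~\ref{conj:c-k} and~\ref{conj:short_in_ex}) whose truth would imply it.

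Your proposal is not a proof either, and you say so yourself. The quasi-polynomial rigidity argument you sketch collapses precisely when the period $L=\mathrm{lcm}\{b_{ij}\}$ is of size $2^{\polyin(\ellR)}$, which nothing in the definition of $\GF_{1,s}$ forbids: a single denominator exponent $b_{ij}$ may already have bit length exceeding~$\ellR$. Once $L>2^\ellR$, each residue class modulo $L$ meets $[0,2^\ellR)$ in at most one point and the degree-$(s-1)$ quasi-polynomial constraint becomes vacuous on that range. Your suggestion to repair this via the Chinese Remainder Theorem and Gauss-sum estimates, uniformly over all admissible $L\le 2^{\polyin(\ellR)}$, is a reasonable research direction but is not carried out; it is exactly the missing idea. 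The paper itself (see~$\S$\ref{ss:finrem-converse}) cautions that a sufficiently robust unconditional argument here could come close to establishing $\sharpP\not\subseteq\FPpo$, which is why the statement is left open.

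One minor point on the ``small $L$'' sketch: the inference from ``the polynomial piece vanishes on the gap'' to ``every interval past the threshold has length $O(2^{\ellR/2})$'' requires the square and the $s$ adjacent non-squares in its residue class to all lie in the \emph{same} interval $[a_{(\nu)},a_{(\nu+1)})$; as written you only assume the gap lies there, so a slightly more careful covering argument is needed even in that regime.
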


In other words, the conjecture says that for every fixed $s$, the segment $\textsc{SQUARES}_{\ellR}$ cannot be represented as $\supp(g_{\ellR})$ for a short GF $g_{\ellR} \in \GF_{1,s}$ of length $\phi(g_{\ellR}) \le \polyin(\ellR)$.
Note that this conjecture is free of complexity assumptions.
If true, Conjecture~\ref{conj:squares_long} shows unconditionally that $\Gzero \subsetneq \Ppo$, which implies $\Gzero \subsetneq \GH$.
We already know from Example~\ref{eg:squares} and Section~\ref{sec:GF} that $\textsc{SQUARES} \in \unique\PiG_{1} \subseteq \GH$. So $\text{SQUARES}$ should be a candidate that separates $\Gzero$ from $\unique\PiG_{1}$ according to this conjecture.

\smallskip
We begin with the following attractive result.

\begin{theo}\label{th:squares-factoring}
If Conjecture~\ref{conj:squares_long} is false, then \ts
$\textsc{INTEGER FACTORING} \in \BPP$.
\end{theo}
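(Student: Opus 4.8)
The plan is to show that if \textsc{SQUARES} has small short GFs, then one can compute the number of ways to write an integer as a sum of two squares in polynomial time, and then invoke the classical fact that this counting function determines the factorization of $n$ (at least for $n$ that are products of two primes, which suffices for \textsc{INTEGER FACTORING}). More precisely: suppose for contradiction that Conjecture~\ref{conj:squares_long} fails, so there is a fixed $s$ such that for every $\ellR$ we have a short GF $g_{\ellR}\in\GF_{1,s}$ with $\supp(g_{\ellR})=\textsc{SQUARES}_{\ellR}$ and $\phi(g_{\ellR})\le\polyin(\ellR)$. Given a target $n<2^{\ellR}$ (so $\ellR=\polyin(\log n)$), I want to count pairs $(a,b)$ with $a^2+b^2=n$.

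First I would build, from $g_{\ellR}$, a two-variable short GF enumerating pairs of squares: roughly $G(t,u)=\sum_{a^2,b^2<2^{\ellR}}t^{a^2}u^{b^2}$. This can be obtained by substitution and Hadamard-product manipulations on $g_{\ellR}$ (Theorem~\ref{th:BPGF1} part 3 and Theorem~\ref{th:BPGF2}), so $G$ is a short GF of polynomial length in some fixed class $\GF_{2,s'}$. Next I would extract the ``diagonal'' $a^2+b^2=n$: intersect $G(t,u)$ with the short GF $\sum_{x+y=n, x,y\ge 0}t^x u^y = t^n\frac{1-(u/t)^{n+1}}{1-u/t}$ — or more carefully, first substitute $u\gets t$ in $G$ to obtain $H(t)=\sum_{a,b}t^{a^2+b^2}$ as a short power series in $\GFstar_{1,s'}$, then the coefficient $[t^n]H(t)$ is exactly $r_2(n)$, the number of representations of $n$ as an ordered sum of two squares. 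By Proposition~\ref{prop:supp_is_P} (applied in the power-series form, using Remark~\ref{rem:Hadamard}), this coefficient is computable in time $\polyin(\phi(H)+\phi(n))=\polyin(\ellR)$. Hence the sequence $\{g_{\ellR}\}$ gives polynomial-size circuits computing $r_2(n)$ for all $n<2^{\ellR}$.

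Now I would use the number theory: for $n=pq$ a product of two distinct odd primes, the value $r_2(n)$ together with the value $r_2$ on small auxiliary inputs reveals whether $p,q\equiv 1\pmod 4$, and more usefully $r_2$ of $n$ and of a few related numbers lets one recover the prime factorization via the theory of Gaussian integers (essentially, knowing $r_2(n)$ and being able to evaluate it lets one find a nontrivial representation $n=a^2+b^2$ when one exists, and $\gcd(n, a+bi)$ in $\Z[i]$ splits $n$). To actually extract a representation rather than just its count, I would binary-search: for each bound $t$, count representations with $a\le t$ by intersecting $H$ with the short GF for $\{x : x=a^2,\ a\le t\}$ before squaring; this localizes $a$ and then $b$, yielding an explicit pair $(a,b)$ with $a^2+b^2=n$ in polynomial time. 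Combined with a randomized reduction that multiplies $n$ by small primes to force it into a form where a two-squares representation exists and is informative, this puts \textsc{INTEGER FACTORING} in $\BPP$ (the randomness coming from choosing auxiliary multipliers and from the standard reduction of factoring to finding square roots / Gaussian factorizations).

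The main obstacle I expect is the last step — turning ``we can evaluate $r_2(n)$ and its partial sums in polynomial time'' into a genuine factoring algorithm. Counting sums of two squares alone does not immediately factor $n$; one needs the extraction-of-a-representation trick plus the Gaussian-integer $\gcd$ argument, and handling the cases where $n$ has no primitive two-squares representation (e.g. when a prime $\equiv 3\pmod 4$ divides $n$ to an odd power) requires the randomized multiplier step, which is why the conclusion is $\BPP$ rather than $\poly$. The GF-manipulation steps, by contrast, are routine given theorems~\ref{th:BPGF1} and~\ref{th:BPGF2}: the only care needed is to stay within a \emph{fixed} index class throughout, which holds because every operation used (substitution, Hadamard product with a bounded-index GF, complementation) increases the index only by a bounded additive amount.
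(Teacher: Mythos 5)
Your generating-function manipulations are sound and essentially parallel to what the paper does: squaring (or otherwise self-convolving) the hypothetical short GF $g_{\ellR}$ for $\textsc{SQUARES}_{\ellR}$ stays in a fixed class $\GF_{1,s'}$ of polynomial length, and Proposition~\ref{prop:supp_is_P} (via Remark~\ref{rem:Hadamard}) then extracts any coefficient in polynomial time. The divergence --- and the genuine gap --- is in the number-theoretic endgame. You compute $r_2(n)$, the number of representations of $n$ as a sum of \emph{two} squares. By the classical formula $r_2(n)=4\bigl(d_1(n)-d_3(n)\bigr)$, this quantity does not determine the factorization of $n$; worse, $r_2(n)=0$ for every $n$ divisible to an odd power by some prime $\equiv 3 \pmod 4$, so for such $n$ your binary-search extraction of a representation and the Gaussian-integer $\gcd$ step have nothing to work with. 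Your proposed fix --- multiplying $n$ by random auxiliary numbers $m$ --- does not repair this: if $p\equiv 3\pmod 4$ divides $n$ exactly once, then $mn$ is a sum of two squares only when $p\mid m$, which you cannot arrange without already knowing $p$. So as written, the reduction from factoring to evaluating $r_2$ (and its partial sums) is not established, and you correctly flagged this as the weak point.

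The paper sidesteps all of this by taking the \emph{fourth} power: $h_{\ellR}=g_{\ellR}^4\in\GF_{1,4s}$ has coefficients $a_{\ellR}(k)$ counting representations of $k<N$ as a sum of four squares, and Jacobi's formula gives
\[
a_{\ellR}(k) \;=\; 8\sum_{4\nmid d,\; d\mid k} d,
\]
from which one reads off the sum-of-divisors function $\sigma(k)$ in polynomial time. The reduction from \textsc{INTEGER FACTORING} to computing $\sigma$ is a known result of Bach--Miller--Shallit and lands exactly in $\BPP$. So the fix for your argument is simply to convolve four copies of $g_{\ellR}$ rather than two (every $k$ is a sum of four squares, so the degeneracy you worried about disappears), and to replace the Gaussian-integer machinery by the citation to the $\sigma$-to-factoring reduction. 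With that substitution your proof becomes the paper's proof; without it, the two-squares route needs a substantially more careful argument than the one you sketch.
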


\begin{proof}
We build on an argument in Section 6 of~\cite{B2}.
Assume there is an $s>0$ so that for every $N = 2^{\ellR}$, we can write $\GFof\big(\text{SQUARES}_{\ellR};t\big) = g_{\ellR}(t)$,
%% \begin{equation}\label{eq:g_l}
%% \sum_{n^{2} < N} t^{n^{2}} \. = \. g_{\ellR}(t),
%% \end{equation}
where $g_{\ellR}(t)$ is a short GF in $\GF_{1,s}$ with $\phi(g) \le \polyin(\ellR)$.
Consider:
\[
h_{\ellR}(t) \. = \. g_{\ellR}(t)^{4} \. =  \. \Biggl( \sum_{n^{2} < N} t^{n^{2}} \Biggr)^4 \, = \. \sum_{k \ge 0 } a_{\ellR}(k) t^{k}\ts,
\]
where
\[
a_{\ellR}(k) \. = \. \# \Bigl\{ (n_{1},n_{2},n_{3},n_{4}) : n_{i}^{2}  < N, \; \sum n_{i}^{2} = k \Bigr\}\ts.
\]
In particular, if $k < N$, then $a_{\ellR}(k)$ is the number of ways to write $k$ as a sum of $4$ squares.
Since $g_{\ellR} \in \GF_{1,s}$, we have $h_{\ellR} = g_{\ellR}^4 \in \GF_{1,4s}$ and also $\phi(h) \le \polyin(\phi(g)) \le \polyin(\ellR)$.

Applying Proposition~\ref{prop:supp_is_P}, each coefficient $a_{\ellR}(k)$ can be computed in time $\polyin(\ellR)$.
By Jacobi's formula (see e.g.~\cite{HW}), we also have:
\[
a_{\ellR}(k)  \. = \. 8 \sum_{4 \nmid d,\. d | k} d  \quad \text{for} \quad k < N\ts.
\]
Here $d$ is a divisor of $k$ which is not a multiple of~$4$.
From this, we can compute in time $\polyin(\ellR)$ the sum of divisors $\sigma(k)$ for every $k < N = 2^{\ellR}$.
By a standard argument (see e.g.~\cite{BMS}), given $\sigma(k)$, a factorization of $k$
can be computed in probabilistic polynomial time~($\BPP$).
%Since $\BPP \subseteq \Ppo$ by Adleman's theorem (see eg.~\cite{MM,Pap}), we have $\textsc{INTEGER FACTORING} \in \Ppo$, as desired.
\end{proof}

\begin{theo} \label{t:squares-sharp}
If Conjecture~\ref{conj:squares_long} is false, then $\sharpP \subseteq \FPpo$.
\end{theo}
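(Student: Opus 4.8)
The plan is to \emph{reverse} the logic of Theorem~\ref{th:weaker_assumption}. There, a $\sharpP$-complete counting problem was shown to lie outside $\Gzero$; here we are handed, by the negation of Conjecture~\ref{conj:squares_long}, a fixed $s$ and a family $\{g_{\ellR}\}_{\ellR>0}$ with $g_{\ellR}\in\GF_{1,s}$, $\phi(g_{\ellR})\le\polyin(\ellR)$ and $\supp(g_{\ellR})=\{k^{2} : k^{2}<2^{\ellR}\}$, and we must \emph{manufacture} from it a polynomial-size circuit family solving a $\sharpP$-complete problem. Concretely, I would fix a $\sharpP$-complete problem $\#\L$ realized by a polynomial-time machine $M$, and show that, using $g_{\ell'}$ for a suitable $\ell'=\polyin_{\L}(\ellR)$ \emph{as an advice string}, a polynomial-time algorithm can output $\#\{\wt c : M(\wt x,\wt c)=1\}$ for every input $\wt x$ of length $\ellR$; this is exactly $\#\L\in\FPpo$.

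The arithmetic content carried by $g_{\ellR}$ is isolated by writing $k=\sum_{j} 2^{j}\beta_{j}$ in binary, so that $k^{2}=\sum_{j} 2^{2j}\beta_{j}+\sum_{j<j'} 2^{j+j'+1}\beta_{j}\beta_{j'}$; thus the coefficients of $g_{\ellR}$ already count the $\{0,1\}$-solutions of a fixed quadratic Diophantine equation. The toolkit of Section~\ref{sec:operations} then lets me build, in polynomial time and preserving polynomial length, new short generating functions from $g_{\ellR}$: substitutions $t\mapsto t^{a}$ (Theorem~\ref{th:BPGF1}, part 3) to rescale coefficients and to place bit-blocks at chosen positions; a \emph{bounded} number of ordinary products, which bring further squared variables into one equation; and Hadamard products, unions and finite complements with explicitly given polyhedral short generating functions (Theorem~\ref{th:BPGF2}, feeding in Theorem~\ref{th:Barvinok} or Theorem~\ref{th:W}) to impose interval and congruence side conditions and, crucially, to mask out the spurious cross-terms into ``scratch'' positions that are then discarded by a projection. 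The goal is a short GF $F$ of index bounded by an absolute constant and of length $\polyin(\ellR)$ such that, after one Hadamard restriction and the substitution $t\mapsto 1$, the value $F(1)$ is precisely the desired $\sharpP$ count --- the numerical extraction being part~2 of Theorem~\ref{th:BPGF1} and Proposition~\ref{prop:supp_is_P}. The counting problem to be encoded this way is a $\sharpP$-hard lattice-point count in a polytope of \emph{growing} dimension (equivalently, a count of $\{0,1\}$-solutions of a linear system, such as counting solutions of a subset-sum equation, or the permanent of a $0$-$1$ matrix), which one squeezes into a bounded number of ``super-variables'' via the quadratic encoding above.

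The hard part is precisely this encoding. In the warm-up Theorem~\ref{th:squares-factoring}, a single fixed power $g_{\ellR}^{4}$ together with Jacobi's four-square identity already yields $\sigma(k)$ for all $k<2^{\ellR}$, which suffices for factoring; but reaching \emph{all} of $\sharpP$ cannot be done by raising $g_{\ellR}$ to a large power, because the index of a product of $m$ short generating functions grows like $ms$, so for $m$ growing the product leaves every fixed class $\GF_{1,s}$ and Proposition~\ref{prop:supp_is_P} no longer extracts its coefficients in polynomial time. Hence only $O(1)$ copies of $g_{\ellR}$ are available, and the whole difficulty is to choose the substitution exponents and the polyhedral Hadamard masks so that the quadratic form supplied by $k\mapsto k^{2}$, read through these $O(1)$ copies, faithfully and count-preservingly simulates a $\sharpP$-hard lattice-point count, with all spurious cross-terms confined to positions that can be stripped off by a projection. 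Once such a gadget is in place, feeding $g_{\ell'}$ as advice and running the polynomial-time operations of Theorems~\ref{th:BPGF1} and~\ref{th:BPGF2} produces the claimed circuits, and therefore $\sharpP\subseteq\FPpo$.
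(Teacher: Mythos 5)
Your high-level framework is the right one and matches the paper's: feed the hypothetical short GF $g_{\ellR}$ for $\textsc{SQUARES}_{\ellR}$ in as advice, use the polynomial-time toolkit (Theorems~\ref{th:BPGF1} and~\ref{th:BPGF2}) to extract a count, and conclude $\sharpP\subseteq\FPpo$. But the entire mathematical content of the theorem is the reduction you defer to an unconstructed ``gadget,'' namely: exhibit a $\sharpP$-complete quantity that is literally a count of squares cut out by conditions expressible as Hadamard products with \emph{explicit, bounded-index} short GFs. You correctly identify that only $O(1)$ copies of $g_{\ellR}$ may be multiplied together, but you then leave the crucial encoding as an open problem (``the hard part is precisely this encoding''), so the proof is not complete. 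The paper closes this gap with a known result of Manders and Adleman~\cite{MA}: counting the integers $x$ with $0\le x\le\gamma$ and $x^{2}\equiv\alpha\ (\mathrm{mod}\ \beta)$ is $\sharpP$-complete, because their $\NP$-hardness reduction from $\textsc{3SAT}$ is a bijection on solution sets (hence parsimonious). Given that, a \emph{single} copy of $g_{\ellR}$ suffices: the count is $\bigl(g_{\ellR}\star h\star k\bigr)(1)$ where $h(t)=(1-t^{\gamma^{2}+1})/(1-t)$ enforces $x^{2}\le\gamma^{2}$ and $k(t)=t^{\alpha}/(1-t^{\beta})$ enforces the congruence, both of index~$1$, so everything stays in a fixed class $\GF_{1,s+2}$ and is computable in time $\polyin(\ellR)$.

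A second, more concrete problem with your sketch: you propose to discard spurious cross-terms ``by a projection.'' Taking a projection of a short GF is precisely the operation this paper shows is \emph{not} polynomial-time (indeed not even length-preserving up to polynomial factors, assuming $\sharpP\not\subseteq\FPpo$); only specializations (unique-witness projections, realized by substituting $u\gets 1$ via Theorem~\ref{th:BPGF1}) are cheap. So unless your masking guarantees a unique witness in the scratch coordinates, that step of the construction would fail, and you give no argument that it does. In short: right scaffolding, but the load-bearing reduction is missing, and the one mechanism you propose for building it rests on an operation that is not available.
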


\begin{proof}[Proof of Theorem~\ref{t:squares-sharp}]
In~\cite{MA}, it is proved that the following problem is $\NP$-complete: Given $\al,\be,\ga \in \N$, decide whether there exists $x \in \N$ such that
\begin{equation}\label{eq:square_congruence}
0 \le x \le \ga \quad \text{and} \quad x^{2} \equiv \al \, (\text{mod}\; \be).
\end{equation}
The argument in~\cite{MA} actually gave bijection between the set of Boolean strings satisfying a $\textsc{3SAT}$ formula and the set of $x$ satisfying~\eqref{eq:square_congruence}.
Here $\al,\be$ and $\ga$ can be computed in polynomial time from the $\textsc{3SAT}$ formula.
Since counting the number of $\textsc{3SAT}$ solutions is $\sharpP$-complete, so is counting the number of solutions for~\eqref{eq:square_congruence}.

Now assume Conjecture~\ref{conj:squares_long} fails, then $\textsc{SQUARES} \in \Gzero$.
This means there is an $s > 0$ so that for every $\ellR > 0$ we can write  $\GFof\big(\text{SQUARES}_{\ellR};t\big) = g_{\ellR}(t)$ for some $g_{\ellR} \in \GF_{1,s}$ with $\phi(g_{\ellR}) \le \polyin(\ellR)$.
%% \begin{equation*}
%% \sum_{k^{2} < 2^{\ellR}} t^{k^{2}} = g_{\ellR}(t).
%% \end{equation*}
 %% (see \eqref{eq:g_l}).
Given $\al,\be,\ga \in \N$, we define:
\begin{equation*}
h(t) = \sum_{i=0}^{\ga^{2}} t^{i} = \frac{1-t^{\ga^{2} + 1}} {1-t} \quad \text{and} \quad k(t) = \sum_{x \equiv \al \, (\text{mod}\;\be)} t^{x} = \frac{t^{\al}}{1-t^{\be}}.
\end{equation*}
Let $\ellR = 2 \lceil \log\ga \rceil$.
The number of solutions for~\eqref{eq:square_congruence} can be counted by taking $g_{\ellR} \.\star\. h \.\star\. k$ and evaluate at $t = 1$, which are polynomial time operations by theorems~\ref{th:BPGF1} and~\ref{th:BPGF2}.
So the above $\sharpP$-complete problem can be solved by polynomial size circuits, which are provided by the $g_{\ellR}$ for different $\ellR$.
This implies $\sharpP \subseteq \FPpo$.
\end{proof}

By Theorem~\ref{th:Ppo_to_GF}, we can represent $\textsc{SQUARES}_{\ellR}$
as $\spec_{x}(B_{\ellR} \cpl \proj_{y}(f_{\ellR}))$ for some short GF $f_{\ellR}$ of length $\polyin(\ellR)$.
Conjecture~\ref{conj:squares_long} says that it is not possible to do so without using projections.
In the domain of $\Pr$ formulas, by Lemma~\ref{lem:Ppo_to_Pr}, we can represent $\textsc{SQUARES}_{\ellR}$ with a $\ex\for$-formula of length $\polyin(\ellR)$.
A similar question can be asked, i.e., are quantifiers necessary?
The following result shows that two quantifiers $\ex\for$ are necessary in Lemma~\ref{lem:Ppo_to_Pr}, already in the case of $\textsc{SQUARES}$.

\begin{prop}\label{prop:ex_not_enough}
$\textsc{SQUARES}_{\ellR}$ cannot be represented by an $\ex$-formula of length $\polyin(\ellR)$ in a fixed number of variables.
\end{prop}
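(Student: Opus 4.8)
The plan is to assume such a representation exists and derive a contradiction from the sheer \emph{number} of squares below $2^{\ellR}$. So suppose, for some fixed number of variables $n$, that
\[
\textsc{SQUARES}_{\ellR} \;=\; \bigl\{ x \;:\; \ex\, \y \in \Z^{n-1} \;\; \Phi_{\ellR}(x,\y) \bigr\},
\]
where $\Phi_{\ellR}$ is a quantifier free Boolean combination of linear inequalities with $\phi(\Phi_{\ellR}) \le \polyin(\ellR)$. By the small model property for $\Pr$ we may intersect with a box and assume the witnessing $\y$ ranges over $[-2^{c\ellR},2^{c\ellR}]^{n-1}$ for a suitable constant $c$, so that the relevant polyhedra are in fact polytopes. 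Applying Proposition~\ref{prop:W} to $\Phi_{\ellR}$, we write $\Phi_{\ellR}(x,\y) \iff \bigvee_{i=1}^{r}(x,\y)\in P_i\cap\Z^n$ with $P_1,\dots,P_r\subseteq\R^n$ disjoint polytopes, $r\le\polyin(\ellR)$ and each $\phi(P_i)\le\polyin(\ellR)$. Setting $T_i:=\proj_{x}(P_i\cap\Z^n)$ we obtain
\[
\textsc{SQUARES}_{\ellR} \;=\; T_1\cup\dots\cup T_r,\qquad\text{each } T_i\subseteq\textsc{SQUARES}_{\ellR}.
\]
By the Barvinok--Woods theorem (Theorem~\ref{th:BW}) each $T_i$ is the support of a short GF $g_i\in\GF_{1,s}$, where $s=s(n)$ is a fixed constant, with $\phi(g_i)\le\polyin(\ellR)$.

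The heart of the argument is the bound $|T_i|\le\polyin(\ellR)$. The number theoretic input is Fermat's theorem that \emph{no four distinct squares form an arithmetic progression}. In GF language this says: for every $\delta\ge 1$ the short GF
\[
g_i(t)\,\star\,t^{\delta}g_i(t)\,\star\,t^{2\delta}g_i(t)\,\star\,t^{3\delta}g_i(t)
\]
(a legitimate Hadamard product of index $\le 4s$, by Theorem~\ref{th:BPGF2}) vanishes identically, since its support is exactly the set of $x$ with $x,x-\delta,x-2\delta,x-3\delta$ all in $T_i\subseteq\textsc{SQUARES}_{\ellR}$. Thus $T_i$ contains no four-term arithmetic progression with nonzero common difference. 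I would then combine this with the structure of $x$-projections of polytopal lattice points in fixed dimension: $T_i$ decomposes into at most $\polyin(\ellR)$ generalized arithmetic progressions of rank at most $s$, and a rank-$\le s$ generalized arithmetic progression that is free of four-term progressions has $O_s(1)$ elements — slice it one generator at a time, each one-dimensional slice being an ordinary progression inside $\textsc{SQUARES}_{\ellR}$ and hence of length $\le 3$ by Fermat. Therefore $|T_i|\le\polyin(\ellR)$.

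Putting the pieces together, $|\textsc{SQUARES}_{\ellR}|\le\sum_{i=1}^{r}|T_i|\le r\cdot\polyin(\ellR)=\polyin(\ellR)$. But $|\textsc{SQUARES}_{\ellR}|=\lfloor 2^{\ellR/2}\rfloor+1$ grows faster than any polynomial in $\ellR$, a contradiction for $\ellR$ large. This also re-proves that two quantifiers $\ex\for$ are genuinely needed in Lemma~\ref{lem:Ppo_to_Pr}.

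The main obstacle is the structural/counting step — controlling the combinatorial shape of $T_i=\proj_{x}(P_i\cap\Z^n)$. One cannot argue by periodicity, since the natural periods (Frobenius-type scales) of such a projection can exceed $2^{\ellR}$, so the entire window $[0,2^{\ellR})$ may lie in the ``transient'' range where no periodicity is visible. Instead one must exploit convexity of $P_i$: a dimension induction in which the full-dimensional ``fat'' case (when $P_i$ contains a ball of radius $\ge\sqrt{n}$, say) immediately forces a run of several consecutive integers into $T_i$, which cannot all be squares; the low-dimensional and ``thin'' cases are pushed down in dimension via a flatness argument. Making the number of pieces polynomial in $\phi(P_i)$, uniformly in the (fixed) dimension, and handling the cancellation among the terms of the short GF, is where the technical work lies.
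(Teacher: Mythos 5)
Your overall architecture matches the paper's: decompose $\Phi_{\ellR}$ into polynomially many disjoint polyhedra via Proposition~\ref{prop:W}, reduce to a single piece $P_i$, show that a large projection $T_i=\proj_x(P_i\cap\Z^n)$ must contain a nontrivial $\AP_4$, and contradict Fermat--Euler together with the super-polynomial size of $\textsc{SQUARES}_{\ellR}$. But the decisive step --- large projection of one polytope implies a long AP --- is exactly where your argument has a genuine gap. You route it through Barvinok--Woods and an asserted decomposition of $T_i$ into $\polyin(\ellR)$ generalized arithmetic progressions of bounded rank. No such decomposition is established in the paper, and it is not a consequence of $T_i$ having a short GF: a short GF represents its support as an \emph{alternating sum} of (generalized) progressions, not a disjoint union, so the support need not be covered by polynomially many GAPs in any way you can read off from the GF. You flag this yourself (``handling the cancellation \dots is where the technical work lies''), which means the proof is not complete as written. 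The Hadamard-product paragraph is also a detour: that $T_i$ is $\AP_4$-free is immediate from $T_i\subseteq\textsc{SQUARES}_{\ellR}$, and quantifying over all $\delta$ is not a legitimate polynomial-time use of Theorem~\ref{th:BPGF2} anyway.

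The paper's Theorem~\ref{th:ex_AP} closes this gap with an elementary argument that needs neither Barvinok--Woods nor any GF machinery: if the projection of a single polyhedron $P_1\subseteq\R^{n+1}$ contains more than $k^{n+1}$ integers, pick $k^{n+1}+1$ of them, lift each $x_i$ to a witness $(x_i,\y_i)\in P_1\cap\Z^{n+1}$, and pigeonhole the lifted points modulo $k$ coordinatewise. Two of them, $(x_i,\y_i)$ and $(x_j,\y_j)$, are congruent mod $k$, so by convexity all the rational convex combinations with $\la\in\{\tfrac1k,\dots,\tfrac{k-1}{k}\}$ are \emph{integer} points of $P_1$, and their $x$-projections give a nontrivial $\AP_{k+1}$ inside the defined set. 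This replaces your unproven structural claim entirely; I'd recommend substituting it for the GAP-decomposition step. Your closing remarks about periodicity and flatness point toward a much harder (and unnecessary) program.
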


\begin{proof}
By $\AP_{k}$ we mean a $k$-term arithmetic progression.
It is well known that $\textsc{SQUARES}$ does not contain any non-trivial $\AP_{4}$.  This was suggested
by Fermat in~1640 and proved by Euler in~1780 (see e.g.\ \cite[p.~115]{Weil}).
Also, the cardinality of $\textsc{SQUARES}_{\ellR}$ is super-polynomial in $\ellR$.
With these two observations, this proposition follows directly from the next theorem when $k=4$.
\end{proof}

\begin{theo}\label{th:ex_AP}
For every fixed $n$ and $k$, there exists a polynomial $P$ so that the following holds.
If an $\ex$-formula
\begin{equation}\label{eq:ex_formula}
\{ \ts x \;:\; \ex \y \in \zz^{n} \;\; \Phi(x,\y) \ts \}
\end{equation}
determines a set of cardinality at least $P(\phi(\Phi))$, then it must contain a non-trivial $\AP_k$.
\end{theo}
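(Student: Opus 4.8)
The plan is to reduce, via Proposition~\ref{prop:W}, to the integer projection of a single polyhedron, and then to establish a quantitative structural statement by induction on the dimension: in fixed dimension a ``large'' integer projection of a polyhedron must contain a long arithmetic progression. Throughout write $d=n+1$ and $S=\{x:\ex\y\in\Z^{n}\ \Phi(x,\y)\}$. First I would dispose of the infinite case: an infinite subset of $\Z$ that is Presburger-definable (here, an $\ex$-projection of a Boolean combination of linear inequalities) is semilinear, hence contains an infinite arithmetic progression $\{a,a+m,a+2m,\ldots\}$, and a fortiori an $\AP_{k}$ for every $k$; so I may assume $S$ is finite.

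Next I would apply Proposition~\ref{prop:W} to the quantifier-free formula $\Phi(x,\y)$ in $d$ variables, writing it as $\bigvee_{i=1}^{r}(x,\y)\in K_{i}\cap\Z^{d}$ with $K_{i}\subseteq\R^{d}$ polyhedra, $r\le\polyin(\phi(\Phi))$ and $\phi(K_{i})\le\polyin(\phi(\Phi))$. Then $S=\bigcup_{i=1}^{r}\proj_{x}(K_{i}\cap\Z^{d})$, so it suffices to produce a polynomial $Q=Q_{d,k}$ such that for every polyhedron $K\subseteq\R^{d}$ and every nonzero rational linear functional $\lambda$, having $|\proj_{\lambda}(K\cap\Z^{d})|\ge Q(\phi(K))$ forces a non-trivial $\AP_{k}$ inside $\proj_{\lambda}(K\cap\Z^{d})$; the polynomial $P$ in the theorem is then $\polyin(\ell)\cdot Q_{d,k}(\polyin(\ell))$ by pigeonhole over the $r$ pieces. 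Phrasing the claim for a general $\lambda$ rather than a fixed coordinate is what makes an induction on $d$ go through. If $K$ has an integer recession vector $\v$ with $\lambda(\v)\ne0$, translating any lattice point of $K$ along $\v$ already produces an infinite progression in $\proj_{\lambda}(K\cap\Z^{d})$; otherwise $\proj_{\lambda}(K)$ is a bounded interval, and by integer programming in fixed dimension every nonempty fibre of $K$ over a value of $\lambda$ contains a lattice point of bit length $\polyin(\phi(K))$, so after intersecting $K$ with a box of that size I may take $K$ to be a polytope.

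For the induction the base case $d=1$ is immediate: $\proj_{\lambda}(K\cap\Z)$ is the image of an interval of integers under a nonzero affine map, i.e.\ an arithmetic progression, which contains an $\AP_{k}$ as soon as it has $k$ elements, so $Q_{1,k}(\ell)=k$ works. For the inductive step I would run a flatness dichotomy with a threshold $w=\kappa(d)k$. If some primitive integer direction $\u$ --- which, being a near-minimizer of the lattice width, may be taken of bit length $\polyin(\phi(K))$ --- satisfies $\width_{\u}(K)<w$, then $K\cap\Z^{d}$ lies in fewer than $w$ lattice hyperplanes $\{\u\cdot\z=c\}$; each carries an induced copy of $\Z^{d-1}$, meets $K$ in a polytope of bit length $\polyin(\phi(K))$ on which $\lambda$ restricts to a linear functional, and the inductive hypothesis bounds each of these $(d-1)$-dimensional projections by $Q_{d-1,k}(\polyin(\phi(K)))$ unless an $\AP_{k}$ already appears --- summing over the hyperplanes gives $|\proj_{\lambda}(K\cap\Z^{d})|\le w\cdot Q_{d-1,k}(\polyin(\phi(K)))$. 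If instead $\width_{\u}(K)\ge w$ for every primitive $\u$, then $K$ is ``fat'', and by the quantitative flatness theorem together with standard results on lattice points in convex bodies (see e.g.~\cite{B2}) $K$ contains a lattice parallelepiped $\{\p+\sum_{i}m_{i}\v_{i}:0\le m_{i}\le k-1\}$ spanned by a $\Z$-basis $\v_{1},\ldots,\v_{d}$; since $\lambda\ne0$ we have $\lambda(\v_{i})\ne0$ for some $i$, and varying $m_{i}$ alone gives a $k$-term lattice segment whose $\lambda$-image is a non-trivial $\AP_{k}$. Taking $Q_{d,k}(\ell)=w\cdot Q_{d-1,k}(\polyin(\ell))+1$ closes the induction.

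The hard part will be the ``fat'' case: a polytope of large lattice width need not contain a large Euclidean ball (a sheared thin parallelepiped is lattice-fat yet metrically thin), so one cannot simply read off a long coordinate segment, and what has to be invoked is the finer geometry-of-numbers fact that a convex body of lattice width at least $w$ in fixed dimension contains a lattice parallelepiped of scale $\gtrsim w/\polyin(d)$ --- the same circle of ideas underlying Theorem~\ref{th:BW}. Carrying out the whole reduction while keeping every modulus, bit length, and common difference polynomial in $\phi(\Phi)$, rather than merely exponential, is where the real work lies.
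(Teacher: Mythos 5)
Your reduction via Proposition~\ref{prop:W} and the pigeonhole over the $r$ pieces matches the paper, but from that point on you take a genuinely different and much heavier route, and the route you take is not complete as written. The paper finishes in two lines of pigeonhole plus convexity: having found one polyhedron $P_1\subseteq\R^{n+1}$ whose projection contains $k^{n+1}+1$ of the selected integers, it lifts each $x_i$ to a lattice point $(x_i,\y_i)\in P_1\cap\Z^{n+1}$ and pigeonholes on the residues of these $k^{n+1}+1$ points in $(\Z/k\Z)^{n+1}$; two of them, say $(x_i,\y_i)$ and $(x_j,\y_j)$, are congruent coordinatewise mod~$k$, so the $k-1$ rational convex combinations with weights $\la\in\{\tfrac1k,\dots,\tfrac{k-1}k\}$ are automatically \emph{integer} points, lie in $P_1$ by convexity, and project onto a non-trivial $\AP_{k+1}$ through $x_i$ and $x_j$. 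This makes the polynomial simply $P(t)=k^{n+1}t^{c}$ with $c$ from Proposition~\ref{prop:W}, and it is completely insensitive to whether $S$ is infinite, whether the $P_i$ are bounded, and to all bit-length bookkeeping --- so your preliminary reductions (semilinearity of the infinite case, recession directions, truncating to a polytope) are all unnecessary.

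The gap in your version is exactly where you say ``the real work lies'': the fat case of your flatness dichotomy rests on the assertion that a polytope of lattice width $\ge\kappa(d)k$ contains a lattice parallelepiped $\{\p+\sum m_i\v_i: 0\le m_i\le k-1\}$ with the $\v_i$ spanning (you ask for a $\Z$-basis, though linear independence would suffice to guarantee $\lambda(\v_i)\neq0$ for some $i$). That statement is a genuine quantitative flatness/covering-radius result, not something you can cite from~\cite{B2} as stated, and proving it --- together with keeping the near-width-minimizing direction $\u$, the unimodular maps identifying each slice with $\Z^{d-1}$, and all resulting data of bit length $\polyin(\phi(K))$ through the induction --- is a substantial project. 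There is also a small unhandled case in your flat branch: if $\lambda$ is proportional to $\u$, its restriction to a slice is constant, so the inductive hypothesis (which assumes a nonzero functional) does not literally apply; each such slice contributes only one value, which is harmless, but it needs to be said. The structural insight you missed, and which collapses all of this, is that one does not need to \emph{find} a lattice segment inside the polyhedron: two lattice points congruent mod~$k$ generate the intermediate lattice points for free via convexity.
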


\begin{proof}
%% Assume $\L_{\ellR}$ can be represented as
%% \begin{equation}\label{eq:squares_ex}
%% x \in \L_{\ellR} \quad \iff \quad \ex \y \in \N^{n} \;:\; \Psi_{\ellR}(x,\y).
%% \end{equation}
  %% Here $n$ is some fixed dimension and $\Psi_{\ellR}$ is a quantifier free $\Pr$ expression with length $\phi(\Psi_{\ellR}) \le \polyin(\ellR)$.
By Proposition~\ref{prop:W}, we know that there is a constant $c = c(n)>0$ so that any quantifier free expression $\Phi$ in $n$ variables describes a disjoint union of $m$ polyhedra $P_{1},\dots,P_{m} \subseteq \R^{n+1}$ with $\ts m < \phi(\Phi)^{c}$.
So the formula~\eqref{eq:ex_formula} can be rewritten as:
\begin{equation}\label{eq:squares_P_i}
S \; = \; \Big\{ x \in \zz \; : \; \ex \y \in \zz^{n} \;\; \bigvee_{i=1}^{m} (x,\y) \in P_{i} \Big\}.
\end{equation}
%%Note that $\ts |\L_{\ellR}| = 2^{\Theta(\ellR)} \gg r$.
Let $\rq(t) = k^{n+1} t^{c}$. Assume that $|S| \ge \rq\bigl(\phi(\Phi)\bigr) > k^{n+1}m$.
Select any $(k^{n+1}m + 1)$ different integers from $S$.
By the pigeonhole principle, one of the polyhedra, say $P_{1}$, contains in its projection at least $k^{n+1}+1$ of these integers.
Denote those integers in the projection of $P_{1}$  by $x_{1},\dots,x_{s}$, where $s=k^{n+1}+1$.
%%Since each $x_{i} \in \L_{\ellR}$,
For every such $x_{i}$, there exists $\y_{i} \in \zz^{n}$ so that $(x_{i}, \y_{i}) \in P_{1}$.
So we have:
\begin{equation*}
(x_{1},\y_{1}),\dots,(x_{s},\y_{s}) \in P_{1} \cap \zz^{n+1}.
\end{equation*}
By the pigeonhole principle, two different pairs $(x_{i},\y_{i})$ and $(x_{j},\y_{j})$  have coordinates equal mod~$k$ pairwise. Since $P_{1}$ is convex, we also have
\begin{equation*}
(\la x_{i} + (1 - \la) x_{j},\; \la \y_{i} + (1 - \la) \y_{j}) \,\in\, P_{1} \cap \zz^{n+1}  \;, \quad \text{where} \; \la \in \big\{\tfrac{1}{k},\dots,\tfrac{k-1}{k}\big\}.
\end{equation*}
The above points project to $\la x_{i} + (1 - \la)x_{j}$.
By \eqref{eq:squares_P_i}, we get a non-trivial $\AP_{k+1}$:
\[
\Big( x_{i},\; \tfrac{k-1}{k}x_{i} + \tfrac{1}{k}x_{j}, \; \dots,\; \tfrac{1}{k}x_{j} + \tfrac{k-1}{k}x_{i},\; x_{j} \Big),
\]
a contradiction.
\end{proof}

\begin{rem}\label{rem:PA_strict}
Proposition~\ref{prop:ex_not_enough} combined with Lemma~\ref{lem:Ppo_to_Pr} implies that there is a sequence of formulas $\{ x : \ex y \. \for \z \; \Phi_{\ellR}(x,y,\z) \}$ of length $\polyin(\ellR)$ for which there are no equivalent formulas $\{x : \ex y \, \Psi_{\ellR}(x,y) \}$ of length $\polyin(\ellR)$.
This implies that the formulas $\{(x,y) : \for \z \; \Phi_{\ellR}(x,y,\z) \}$ have no equivalent quantifier free formulas in $x$ and $y$ of length $\polyin(\ellR)$.
Therefore, quantifier elimination in $\Pr$ necessarily increases the length of formulas by a super-polynomial factor, even in a bounded number of variables ($x,y \in \N,\, \z \in \N^{3}$).
%% This fact was proved by Gr\"{a}del (see \cite[Theorem~5.3]{G}) under the assumption $\NP \not\subseteq \Ppo$.
%% Here we proved it unconditionally.
\end{rem}

\begin{rem}\label{rem:both_not_enough}
From $\textsc{SQUARES}$, one can easily create another a language $\L \in \poly$ which $\L_{\ellR}$ be represented neither by $\for$ nor by $\ex$ formulas of length $\polyin(\ellR)$.
For $\ellR$ odd, we let $\L$ contain all squares between $2^{\ellR}$ and $2^{\ellR + 1}$. For $\ellR$ even, we let $\L$ contain all non-squares between $2^{\ellR}$ and $2^{\ellR+1}$.
It is clear that $\L \in \poly$.
The above argument shows that $\L_{\ellR}$ cannot be represented by $\ex$-formulas of length $\polyin(\ellR)$ when $\ellR$ is odd.
Under a negation, the same argument also works for $\for$-formulas when $\ellR$ is even.
We denote this language by $\textsc{SQUARES}'$.
This will be used in Section~\ref{sec:rel}.
\end{rem}

\subsection{Short GFs and arithmetic progressions}
Generalizing the above observation on sets with no arithmetic progressions, we suggest another conjecture on short GFs. Again, by $\AP_k$ we mean a \emph{$k$-term arithmetic progression}.

\begin{defi}\label{def:c-k}
Fix $c > 0$ and $k \ge 3$. A short GF $g$ is said to have the $(c,k)$-property
if either $|\supp(g)| < \phi(g)^{c}$ or $\supp(g)$ contains an $\AP_k$.
\end{defi}

\begin{conj}\label{conj:c-k}
For every $s$ and $k$, there exists $c > 0$ so that every short GF
$g(t) \in \GF_{1,s}$ has the $(c,k)$-property.
\end{conj}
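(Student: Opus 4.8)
The plan is to deduce the conjecture from a structural claim: there are a constant $R=R(s)$ and a polynomial $\polyin_s$ so that every $g\in\GF_{1,s}$ has $\supp(g)=G_1\cup\dots\cup G_r$ with each $G_\ell$ a generalized arithmetic progression (GAP) of rank $\le R$ and $r\le\polyin_s(\phi(g))$. This suffices: a rank-$\le R$ GAP with $\ge k^{R}$ elements has a side of length $\ge k$, hence contains an $\AP_k$; and if no $G_\ell$ has $\ge k^{R}$ elements, then $|\supp(g)|\le r\,k^{R}\le\polyin_s(\phi(g))\,k^{R}$, which is $<\phi(g)^{c}$ for a suitable $c=c(s,k)$ (enlarging $c$ covers the finitely many small values of $\phi(g)$, and if $\supp(g)$ is infinite then, $r$ being finite, some $G_\ell$ is infinite and so already contains an $\AP_k$). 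So everything reduces to the structural claim.

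To prove the structural claim, expand each summand of $g(t)=\sum_{i=1}^{M}\tfrac{c_i\,t^{a_i}}{(1-t^{b_{i1}})\cdots(1-t^{b_{ik_i}})}$ as a power series. A single term has coefficients given exactly by a quasi-polynomial in $n$ of degree $\le k_i-1$ and period $\mathrm{lcm}_m|b_{im}|$ (partial fractions into powers of $1-t/\zeta$ for roots of unity $\zeta$), so, after normalizing away negative exponents, sorting the shifted starting points $a_i$ breaks $[0,\infty)$ into $\le M+1\le\phi(g)+1$ intervals $I_0,I_1,\dots$ on each of which $[t^{n}]g$ agrees with a quasi-polynomial $Q_j$ of degree $\le s-1$ and period $\Pi_j\mid\Pi:=\mathrm{lcm}_{i,m}|b_{im}|$. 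On each residue class modulo $\Pi_j$, $Q_j$ is an integer-valued polynomial of degree $\le s-1$ taking values in $\{0,1\}$ (because $g$ is a GF), and an elementary finite-difference estimate — the top finite difference of an integer-valued polynomial of positive degree is a nonzero integer, forcing superpolynomial growth — shows that any such polynomial is \emph{constant} once it takes values in $\{0,1\}$ on $m_0(s)$ consecutive points of the class, for an explicit $m_0(s)$. Setting $m_0:=\max(m_0(s),k)$, every interval $I_j$ of length $\ge m_0\Pi_j$ obeys a clean dichotomy: either some residue class is constantly $1$ on $I_j$, giving $m_0\ge k$ points of $\supp(g)$ in arithmetic progression (an $\AP_k$, so the conjecture holds for this $g$), or every class is constantly $0$ on $I_j$ and $I_j$ contributes nothing.

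The main obstacle is the intervals $I_j$ whose length is smaller than the period $\Pi_j$: these can still have length exponential in $\phi(g)$, the constancy estimate says nothing there, and one must show that $\{n\in I_j:Q_j(n)=1\}$, the level set of a sum of $\le M$ bounded-degree quasi-polynomials constrained to lie in $\{0,1\}$, still decomposes into $\polyin_s(\phi(g))$ GAPs of bounded rank. (Restricted to a residue class modulo $\Pi_j$, $Q_j$ is a degree-$\le s-1$ polynomial, so that class contributes either a full rank-$1$ GAP or at most $s-1$ isolated support points; the real issue is to organize the \emph{contributing classes} into few GAPs.) Already the index-one case $s=1$ carries the full difficulty: there $\{Q_j=1\}$ is a Boolean combination of $\le M$ arithmetic progressions intersected with a long interval, and writing it as few GAPs is exactly the bookkeeping to be controlled. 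This is the same phenomenon that makes Conjecture~\ref{conj:squares_long} hard: the set $\{\,n<3^{\ellR}:n\text{ has only the digits }0,1\text{ in base }3\,\}$ has superpolynomially many elements in $\ellR$ and contains no $\AP_3$, so Conjecture~\ref{conj:c-k} asserts in particular that this Cantor-type set is not a short GF of bounded index — a statement of the same flavor as, and at least as hard as, $\textsc{SQUARES}\notin\Gzero$.

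An alternative route would finish in one stroke: if one could show that, for $g\in\GF_{1,s}$, the set $\supp(g)$ is definable by an existential Presburger formula in a bounded number of variables and of length $\polyin_s(\phi(g))$, then Theorem~\ref{th:ex_AP} would apply verbatim. The obstruction is that $n\in\supp(g)$ encodes $\sum_i c_i\,N_i(n)=1$, where $N_i(n)$ counts lattice points of a simplicial cone on a hyperplane, and such counting conditions are not transparently existential with few quantified variables. The concrete plan I would pursue for the structural claim is to make these counts explicit via Barvinok-style unimodular cone decompositions — which only replace $g$ by another short GF of the same index (Theorem~\ref{th:Barvinok}, Theorem~\ref{th:BPGF2}) — and then read a GAP decomposition of $\supp(g)$ with polynomially many pieces off the resulting normal form.
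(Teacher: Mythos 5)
The statement you are addressing is Conjecture~\ref{conj:c-k}: the paper states it as an open conjecture and offers no proof, only the implications Conjecture~\ref{conj:short_in_ex} $\Rightarrow$ Conjecture~\ref{conj:c-k} $\Rightarrow$ Conjecture~\ref{conj:squares_long}. Your proposal does not close it either, and you say so yourself: everything is funneled into the ``structural claim'' that $\supp(g)$ decomposes into $\polyin_s(\phi(g))$ generalized arithmetic progressions of bounded rank, and that claim is left unproved. This gap is not a fixable detail --- the structural claim is essentially a rephrasing of the paper's Conjecture~\ref{conj:short_in_ex} ($\Gzero\subseteq\SigmaPA_{1}$), which the authors explicitly flag as possibly ``wishful thinking.'' Indeed, by Proposition~\ref{prop:W} an $\ex$-formula of polynomial length in boundedly many variables describes exactly a union of polynomially many projections of bounded-dimension polyhedra, i.e.\ polynomially many bounded-rank GAP-like pieces, so your claim and theirs are two formulations of the same open problem; and your deduction of the conjecture from the structural claim (pigeonhole on the sides of any large bounded-rank GAP, plus a cardinality count when all pieces are small) is the same argument as in Theorem~\ref{th:ex_AP} and Proposition~\ref{prop:best-case}. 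The proposal therefore reproduces a reduction the paper already records, without supplying the missing ingredient.

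Two further cautions on the parts you do argue. First, the expansion of an individual summand $c_i t^{a_i}/\prod_j(1-t^{b_{ij}})$ as a power series is not well defined when some $b_{ij}<0$; ``normalizing away negative exponents'' must be done uniformly across summands before one can speak of a piecewise quasi-polynomial coefficient function, so even the ``easy'' half of your dichotomy needs care. Second, and more importantly, your own discussion of the intervals $I_j$ of length below the period $\Pi_j$ (which is typically exponential in $\phi(g)$) concedes that the level set $\{Q_j=1\}$ there is precisely the kind of set --- e.g.\ your base-$3$ Cantor-type example --- whose exclusion is the whole content of the conjecture. In short: this is a reasonable plan for attacking an open problem, with the hard step correctly identified but not carried out; it is not a proof, and the paper contains no proof to compare it against.
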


%% \begin{eg}
%% Consider the set $\textsc{NSP}$ of \emph{near-square primes}, i.e., primes of the form $k^{2} + 1$.
%% Since \textsc{SQUARES} does not contain any $\AP_4$, the same holds for $\textsc{NSP}$.
%% The Prime Number Theorem suggests that the number of near-square primes between $1$ and $N$ is
%% $\Theta(\sqrt{N}/\log N)$, which is super-polynomial in $\log N$, see~\cite[$\S$6.IV]{Rib}.
%% %\footnote{This conjecture is sometimes attributed to E.~Landau (1912).}
%% If this conjectural asymptotics holds, then Conjecture~\ref{conj:c-k} implies
%% that the near-square primes up to $N$ do not have short GFs of size $\polyin(\log N)$.
%% On the other hand, if near-square primes have density only $\polyin(\log N)$,
%% %%then they can be written as short GFs of size $\polyin(\log(N))$.
%% %%Indeed, for any $N$, we can write a monomial sum:
%% then we can write a monomial sum:
%% \begin{equation*}
%% f_{N}(t) = \sum_{x \le N ,\, x \in \textsc{NSP}} t^{x}.
%% \end{equation*}
%% Such a sum $f_{N}$ is a short GF in the class $\GF_{1,0}\.$ with length
%% $O\bigl((\log N)^{c}\bigr)$ for some constant~$c$.
%% \end{eg}

% \begin{conj}[Landau]\label{conj:Landau}
% There are infinitely near-square primes.
% Moreover, the number of $\,p \in \textsc{NPS}$, $p \le N$ is $\Theta(\sqrt{n}/\log(N))$.
% \end{conj}

\begin{prop}\label{prop:c-k_implies_squares}
Conjecture~\ref{conj:c-k} implies Conjecture~\ref{conj:squares_long}.
\end{prop}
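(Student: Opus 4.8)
The plan is to prove the contrapositive: assuming Conjecture~\ref{conj:squares_long} fails, I will exhibit, for each $k$, a short GF $g(t) \in \GF_{1,s}$ (with $s$ fixed) violating the $(c,k)$-property for every $c$. If Conjecture~\ref{conj:squares_long} is false, then $\textsc{SQUARES} \in \Gzero$, so there is a fixed $s$ such that for every $\ellR$ we have $\GFof(\textsc{SQUARES}_{\ellR}; t) = g_{\ellR}(t)$ with $g_{\ellR} \in \GF_{1,s}$ and $\phi(g_{\ellR}) \le \polyin(\ellR)$. The point is that this single sequence $\{g_{\ellR}\}$ already witnesses the failure of the $(c,4)$-property for \emph{every} $c$, hence the failure of Conjecture~\ref{conj:c-k} for $k = 4$ (and so for all $k \ge 4$, and thus — by truncating the progression — for all $k \ge 3$, though $k=4$ suffices since a short GF with the $(c,3)$-property need not have the $(c,4)$-property; let me just argue $k=4$ and note it gives Conjecture~\ref{conj:c-k} false for $k=4$).

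The key steps, in order: (1) Recall the two classical facts already invoked in the proof of Proposition~\ref{prop:ex_not_enough}: $\supp(g_{\ellR}) = \textsc{SQUARES}_{\ellR}$ contains no non-trivial $\AP_4$ (Fermat--Euler), and $|\textsc{SQUARES}_{\ellR}| = \lfloor 2^{\ellR/2}\rfloor + 1$, which is super-polynomial in $\ellR$. (2) Fix any candidate constant $c > 0$. Since $\phi(g_{\ellR}) \le \polyin(\ellR) =: P(\ellR)$ for a fixed polynomial $P$ (depending only on $s$ and the implied language), we have $\phi(g_{\ellR})^{c} \le P(\ellR)^{c}$, which is polynomially bounded in $\ellR$. (3) But $|\supp(g_{\ellR})| = \lfloor 2^{\ellR/2}\rfloor + 1$ grows faster than any polynomial in $\ellR$, so for all sufficiently large $\ellR$ we get $|\supp(g_{\ellR})| \ge \phi(g_{\ellR})^{c}$. (4) For such $\ellR$, the short GF $g_{\ellR} \in \GF_{1,s}$ has support of cardinality at least $\phi(g_{\ellR})^c$ yet contains no $\AP_4$; hence it fails the $(c,4)$-property. (5) Since $c$ was arbitrary and $s$ is fixed, no constant $c$ makes every $g \in \GF_{1,s}$ satisfy the $(c,4)$-property, contradicting Conjecture~\ref{conj:c-k} with this $s$ and $k = 4$. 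This establishes the contrapositive, completing the proof.

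I expect no serious obstacle here: the argument is essentially a cardinality-versus-length comparison combined with the absence of $\AP_4$ in the squares, both ingredients being available verbatim from the surrounding text (the Fermat--Euler fact is cited in Proposition~\ref{prop:ex_not_enough}, and the bound $\phi(g_{\ellR}) \le \polyin(\ellR)$ is part of the definition of $\Gzero$). The only point requiring a sentence of care is the order of quantifiers: Conjecture~\ref{conj:c-k} asserts "for every $s,k$ there exists $c$ such that all $g \in \GF_{1,s}$ have the $(c,k)$-property," so to contradict it I must fix the offending $s$ (the one coming from $\textsc{SQUARES} \in \Gzero$) and $k=4$, and then show that \emph{no} $c$ works — which is exactly what step (4) delivers since the failure occurs for large $\ellR$ regardless of how large $c$ is chosen.
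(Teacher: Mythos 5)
Your proposal is correct and follows essentially the same route as the paper's proof: both fix the $s$ witnessing $\textsc{SQUARES}\in\Gzero$, apply the $(c,4)$-property, and derive a contradiction from the super-polynomial cardinality of $\textsc{SQUARES}_{\ellR}$ together with the Fermat--Euler $\AP_4$-freeness of the squares. The only difference is presentational (contrapositive versus direct contradiction), and your step comparing $|\supp(g_{\ellR})|$ against $\phi(g_{\ellR})^{c}\le\polyin(\ellR)^{c}$ is, if anything, slightly more carefully stated than in the paper.
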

\begin{proof}
Assume Conjecture~\ref{conj:c-k} holds but Conjecture~\ref{conj:squares_long} fails, i.e., $\textsc{SQUARES} \in \Gzero$.
So there is an $s>0$ such that $\textsc{SQUARES}_{\ellR}$ can be represented as $\supp(g_{\ellR})$ with $g_{\ellR} \in \GF_{1,s}$ and $\phi(g_{\ellR}) \le \polyin(\ellR)$.
Conjecture~\ref{conj:c-k} applied to $s$ and $k=4$ gives us a $c > 0$ so that all $g \in \GF_{1,s}$ have the $(c,4)$-property.
We have $\supp(g_{\ellR}) = |\textsc{SQUARES}_{\ellR}| \gg \ellR^{c}$.
So if $\ellR$ is large enough, $g_{\ellR}$ contains an $\AP_4$.
This contradicts the fact that $\textsc{SQUARES}$ is $\AP_4$ free.
\end{proof}

%% Now, for the $\Pr$ formulas which contain some $\AP_k$, we have:

%% \begin{theo}\label{th:ex_AP}
%% For every fixed $n$ and $k$, there exists $c>0$ so that the following holds.
%% If a $\Pr$ formula
%% \begin{equation*}%%\label{eq:ex_formula}
%% x \;:\; \ex \y \in \N^{n} \;\; \Phi(x,\y)
%% \end{equation*}
%% determines a set with cardinality greater than $\phi(\Phi)^{c}$, then it contains an $\AP_k$.
%% \end{theo}

%% \begin{proof}
%% Similar to the proof of Proposition~\ref{prop:ex_not_enough}, with a $\AP_4$ replaced by a $\AP_k$.
%% \end{proof}

%% \begin{cor}\label{cor:super_polylog}
%% Consider a language $\L$ with super-polynomial density, i.e., for every fixed $c > 0$, we have $\ellR^{c} = o(|\L_{\ellR}|)$.
%% If there is $k \ge 3$ so that $\L$ contains only a bounded number of $\AP_k$, then for every fixed $n$, the segments $\L_{\ellR}$ cannot be represented by an $\ex$-formula $\Phi_{\ellR}$ $\phi(\Phi_{\ellR}) \le \polyin(\ellR)$.
%% \end{cor}

%% \begin{proof}
%% Discarding a large enough finite subset, we can make sure $\L$ is $\AP_k$--free.
%% Assume there is a sequence of $\Phi_{\ellR}$ with $\phi(\Phi_{\ellR}) \le \polyin(\ellR)$ which can represent $\L_{\ellR}$.
%% We have $|\L_{\ellR}|$ super-polynomial in $\ellR$.
%% So for every $c > 0$, we have $|\L_{\ellR}| > \phi(\Phi_{\ellR})^{c}$ for $\ellR$ large enough.
%% By Theorem~\ref{th:ex_AP}, we then have $\L_{\ellR}$ contains an $\AP_k$, a contradiction.
%% \end{proof}

\subsection{Short GFs and primes}
In a similar manner, we ask if primes can be represented by short GFs of polynomial length.
Let $\textsc{PRIMES}$ be the language consisting of all primes written in binary.
Then
\begin{equation}\label{eq:squares_l}
\textsc{PRIMES}_{\ellR} = \{p \text{ prime} :\; p < 2^{\ellR}\}.
\end{equation}

\begin{conj}\label{conj:primes_long}
%% For every fixed $s$, the segment $\textsc{PRIMES}_{\ellR}$ cannot be represented as $\supp(g_{\ellR})$ for a short GF $g_{\ellR} \in \GF_{1,s}$ with $\phi(g_{\ellR}) \le \polyin(\ellR)$ as $\ellR \to \infty$.
$\textsc{PRIMES}$ is not in $\Gzero$.
\end{conj}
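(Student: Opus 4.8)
\emph{Proof proposal.} The plan is to run the strategy that gave theorems~\ref{th:squares-factoring} and~\ref{t:squares-sharp}, with $\textsc{PRIMES}$ in place of $\textsc{SQUARES}$, and reduce the conjecture to a suitable hardness statement. First I would assume the conjecture fails, so $\textsc{PRIMES}\in\Gzero$: there is a fixed $s$ with $\GFof(\textsc{PRIMES}_{\ellR};t)=g_{\ellR}(t)$, $g_{\ellR}\in\GF_{1,s}$, $\phi(g_{\ellR})\le\polyin(\ellR)$, for every $\ellR$. Second, for any $\alpha,\beta$ of polynomial bit length the arithmetic progression GF $\frac{t^{\alpha}}{1-t^{\beta}}$ and, for $N<2^{\ellR}$, the interval GF $\frac{t^{\alpha}-t^{b+1}}{1-t}$ both lie in $\GF_{1,1}$; taking their Hadamard product with $g_{\ellR}$ (Theorem~\ref{th:BPGF2}) and evaluating at $t=1$ (Theorem~\ref{th:BPGF1}) computes, in time $\polyin(\ellR)$ using $g_{\ellR}$ as advice, the number of primes $<N$ in any prescribed residue class $\alpha\pmod{\beta}$. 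Third, I would conclude that $\textsc{PRIMES}\in\Gzero$ puts the prime-counting function $x\mapsto\pi(x)$ — and, more generally, $x\mapsto\pi(x;\beta,\alpha)$ — into $\FPpo$.

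The obstacle is upgrading this to a standard complexity assumption, the way Theorem~\ref{t:squares-sharp} did for $\textsc{SQUARES}$. That argument relied on an external $\sharpP$-completeness: by~\cite{MA} counting $x$ with $0\le x\le\gamma$ and $x^{2}\equiv\alpha\pmod{\beta}$ is $\sharpP$-complete, and a short GF for squares turns this count into a Hadamard product. For $\textsc{PRIMES}$ I know of no analogous hard counting problem — counting primes in an interval or in an arithmetic progression is not known to be $\sharpP$-hard, and $\pi(x)$ is in fact widely believed to be polynomial-time computable — and there is no prime analogue of Jacobi's four-square identity that would expose divisors of $k<N$ from $g_{\ellR}$, so the factoring route of Theorem~\ref{th:squares-factoring} does not transfer either. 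Thus this line seems to yield only the soft conditional ``$\textsc{PRIMES}\in\Gzero\Rightarrow\pi\in\FPpo$'' rather than a reduction to $\sharpP\not\subseteq\FPpo$.

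The combinatorial route behind Conjecture~\ref{conj:c-k} and Proposition~\ref{prop:c-k_implies_squares} is also blocked: by the Green--Tao theorem $\textsc{PRIMES}$ contains arbitrarily long arithmetic progressions, so $\textsc{PRIMES}$ cannot be separated from $\Gzero$ by $\AP_{k}$-freeness, and Conjecture~\ref{conj:c-k} does \emph{not} imply Conjecture~\ref{conj:primes_long}. The only remaining approach I see is a direct structural lower bound — identifying a property of the sets $\textsc{PRIMES}_{\ellR}$ (their density $\sim 1/\log$, the irregular spacing of prime gaps, or fine equidistribution in residue classes) that is provably incompatible with being the support of a one-variable short GF of bounded index and polynomial length. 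Since \emph{every} finite set has some short GF, such an argument must genuinely exploit both the length bound and the bounded index, and this is the step I expect to be hard; it is exactly why the statement is posed as a conjecture. As with $\textsc{SQUARES}$, a proof would be worth the effort: $\textsc{PRIMES}\in\poly\subseteq\unique\PiG_{1}\subseteq\GH$, so Conjecture~\ref{conj:primes_long} would give an unconditional separation $\Gzero\subsetneq\GH$.
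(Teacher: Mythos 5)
This statement is a conjecture, and the paper offers no proof of it — only the subsequent Proposition showing that its failure would put $\pi(n)$ into $\FPpo$, which is exactly the conditional consequence you derive via the Hadamard product with an interval GF. Your assessment is correct and matches the paper's own treatment: the conjecture is open, the $\sharpP$-hardness and four-square-identity routes used for $\textsc{SQUARES}$ have no known prime analogue, and (as you rightly note via Green--Tao) Conjecture~\ref{conj:c-k} cannot separate $\textsc{PRIMES}$ from $\Gzero$, so no proof should be expected here.
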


In other words, the conjecture says that for every fixed $s$, the segment $\textsc{PRIMES}_{\ellR}$ cannot be represented as $\supp(g_{\ellR})$ for a short GF $g_{\ellR} \in \GF_{1,s}$ of length $\phi(g_{\ellR}) \le \polyin(\ellR)$.
This conjecture, if true, would also show $\Gzero \subsetneq \Ppo$ unconditionally.

\begin{prop}
Let $\pi(n)$ be the number of primes between $1$ and $n$.
If Conjecture~\ref{conj:primes_long} is false then $\pi(n)$ can be computed by circuits of size $\polyin(\log n)$.
\end{prop}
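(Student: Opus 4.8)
The plan is to convert the failure of Conjecture~\ref{conj:primes_long} into a short ``advice'' generating function for each input length, and then to extract $\pi(n)$ from it by a fixed polynomial-time procedure that can be hard-wired into a circuit. First I would unpack the hypothesis: if Conjecture~\ref{conj:primes_long} is false then $\textsc{PRIMES} \in \Gzero$, so by Definition~\ref{def:Gzero} there is a fixed $s > 0$ such that for every $\ellR > 0$ there is a short GF $g_{\ellR} \in \GF_{1,s}$ with $\supp(g_{\ellR}) = \textsc{PRIMES}_{\ellR} = \{p \text{ prime} : p < 2^{\ellR}\}$ and $\phi(g_{\ellR}) \le \polyin(\ellR)$.

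Given $n$, I would set $\ellR = \floor{\log_2 n} + 1$ so that $n < 2^{\ellR}$ and $\ellR = O(\log n)$, and form the short GF
\[
h_{n}(t) \, = \, \sum_{i=0}^{n} t^{i} \, = \, \frac{1-t^{n+1}}{1-t} \, \in \, \GF_{1,1},
\]
which has $\phi(h_{n}) \le \polyin(\ellR)$ and support $\{0,1,\dots,n\}$. By Theorem~\ref{th:BPGF2}, the Hadamard product $k_{n} = g_{\ellR} \star h_{n}$ can be computed in time $\polyin(\phi(g_{\ellR}) + \phi(h_{n})) \le \polyin(\ellR)$, lies in the fixed class $\GF_{1,s+1}$, and has support $\textsc{PRIMES}_{\ellR} \cap \{0,\dots,n\} = \{p \text{ prime} : p \le n\}$. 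Then by Theorem~\ref{th:BPGF1} the cardinality $|\supp(k_{n})| = k_{n}(1) = \pi(n)$ can be computed in time $\polyin(\phi(k_{n})) \le \polyin(\ellR)$.

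Finally I would observe that this computation uses $g_{\ellR}$ only as an input of bit length $\polyin(\ellR)$ determined by the bit length of $n$ alone, while all other steps are $\polyin(\ellR)$-time manipulations of short GFs. Hard-wiring $g_{\ellR}$ as advice and simulating the computation by a Boolean circuit yields, for each input length, a circuit of size $\polyin(\ellR) = \polyin(\log n)$ computing $\pi(n)$, as claimed. I do not expect a genuine obstacle here: the argument is a direct combination of the $\Gzero$ hypothesis with the polynomial-time operations of Theorems~\ref{th:BPGF1} and~\ref{th:BPGF2}, in the same spirit as the proofs of Theorems~\ref{th:squares-factoring} and~\ref{t:squares-sharp}. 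The only point needing care is that membership in $\Gzero$ provides a single index bound $s$ valid for all $\ellR$, so $k_{n}$ always lands in the fixed class $\GF_{1,s+1}$ and the polynomial-time guarantees hold with constants independent of $\ellR$.
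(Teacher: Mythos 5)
Your argument is correct and is essentially identical to the paper's proof: both take the Hadamard product of the hypothesized short GF $g_{\ellR}$ for $\textsc{PRIMES}_{\ellR}$ with $(1-t^{n+1})/(1-t)$ via Theorem~\ref{th:BPGF2} and then evaluate at $t=1$ via Theorem~\ref{th:BPGF1}. Your added remarks about hard-wiring $g_{\ellR}$ as advice and the uniform index bound $s$ are correct elaborations of what the paper leaves implicit.
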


\begin{proof}
Assume Conjecture~\ref{conj:primes_long} is false, i.e., there is an $s > 0$ so that for every $\ellR > 0$ we have $\GFof\big(\textsc{PRIMES}_{\ellR};t\big) \. = \. g_{\ellR}(t)$,
%% \begin{equation*}
%% \sum_{x \in \textsc{PRIMES}_{\ellR}} \ts t^{x} \. = \. g_{\ellR}(t)\ts,
%% \end{equation*}
where $g_{\ellR} \in \GF_{1,s}$ and $\phi(g_{\ellR}) \le \polyin(\ellR)$.
Given $n < 2^{\ellR}$, we have:
\begin{equation*}
%%  \sum_{\substack{x \in \textsc{PRIMES}_{\ellR}\\ x \le n}} t^{x}
  \GFof\big(\textsc{PRIMES}_{\ellR} \cap [0,n];\, t\big) \;=\; g_{\ellR}(t) \, \star \, \frac{1 - t^{n+1}}{1 - t} \. =\. h_{n}(t)\ts.
\end{equation*}
By Theorem~\ref{th:BPGF2}, we can compute $h_{n}$ in time $\polyin(\ellR)$.
Substituting $t \gets 1$, we get $\pi(n)$.
\end{proof}

\begin{rem}
In \cite{LO}, using strong analytic tools, Lagarias and Odlyzko gave an
algorithm to compute $\pi(n)$ in time $O(n^{1/2 + \ep})$, which is
exponential in $\log n$.  If Conjecture~\ref{conj:primes_long} is false,
then for each~$\ellR$, a far better $\polyin(\ellR)$ algorithm exists for
computing $\pi(n)$ for all $n < 2^{\ellR}$.
\end{rem}

\bigskip

\section{Relative complexity of short GFs}\label{sec:rel}
In this section, we compare short GFs with $\Pr$ formulas with one quantifier.
We refer back to Section~\ref{sec:pres} for the definition of $\Pr$ formulas.

\subsection{$\Pr$ complexity classes}
The most basic PA formulas contain no quantifiers, i.e., only a Boolean combination of inequalities.

\begin{defi}
  The class $\SigmaPA_{0} = \PiPA_{0}$ consists of languages definable by quantifier free PA formulas of polynomial lengths.
  In other words, a language $\L$ is in $\SigmaPA_{0}$ if for every $\ellR > 0$, there is a quantifier free PA expression $\Phi_{\ellR}(x)$ of length $\phi(\Phi) \le \polyin_{\L}(\ellR)$ so that:
  \begin{equation*}
x \in \L_{\ellR} \quad \iff \quad \Phi_{\ellR}(x).
\end{equation*}
\end{defi}

By Proposition~\ref{prop:W}, $\L \in \SigmaPA_{0}$ if and only if every initial segment $\L_{\ellR}$ is a union of polynomially many intervals in $\NN$.
By Theorem~\ref{th:W}, we have $\SigmaPA_{0} \subset \Gzero$.

\begin{eg}
The language $\textsc{EVEN}$ of even integers is not in $\SigmaPA_{0}$. However, $\textsc{EVEN} \in \Gzero$, because:
\begin{equation*}
\sum_{x \in \textsc{EVEN}_{\ellR}} t^{x} = t^{0} + t^{2} + \dots + t^{2^{\ellR}-2} = \frac{1-t^{2^{\ellR}}}{1-t^{2}}.
\end{equation*}
So we conclude that $\SigmaPA_{0} \subsetneq \Gzero$.
\end{eg}

\begin{defi}
The class $\SigmaPA_{1}$ consists of languages definable by $\ex$-formulas of polynomial lengths.
In other words, $\L \in \SigmaPA_{1}$ if there is an $n$ so that for every $\ellR > 0$, we can represent
\begin{equation*}
x \in \L_{\ellR} \quad \iff \quad \ex \y \in \N^{n} \;\; \Phi_{\ellR}(x,\y),
\end{equation*}
where $\Phi_{\ellR}(x,\y)$ is a quantifier-free PA expression of length $\phi(\Phi_{\ellR}) = \polyin_{\L}(\ellR)$.
The class $\PiPA_{1}$ is defined similarly, but with $\for$-formulas.
In other words, $\L \in \PiPA_{1}$ if and only if $\lnot\L \in \SigmaPA_{1}$.
\end{defi}

%% Any formula $F$ in $\for^{n}$ or $\ex^{n}$ defines a set of natural numbers $x$ satisfying it. We also denote this set by $F$, and its cardinality by $|F|$.
%%Moreover, every formula in $\for^{n}$ is the negation of a formula in $\ex^{n}$, and vice versa.
%% This is not the same as the binary length of $F$, which is denoted by $\phi(F)$.

\begin{conj}\label{conj:short_in_ex}
$\Gzero \subseteq \SigmaPA_{1} \cap \PiPA_{1}$.
%% For every $s$, there exists $n = n(s)$ and $d =d(s)$ so that the following holds. For every $f \in \GF_{1,s}$, there exists $F \in \ex^{n}$ with $F = \supp(f)$ and $\phi(F) < \phi(f)^{d}$.
\end{conj}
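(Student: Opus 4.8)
The plan is to prove the single inclusion $\Gzero\subseteq\SigmaPA_1$ and deduce both halves. Indeed, for $\L\in\Gzero$ we have $\GFof(\L_\ellR;t)=g_\ellR(t)\in\GF_{1,s}$ for a fixed $s$, and the box-complement $\frac{1-t^{2^\ellR}}{1-t}-g_\ellR(t)$ is, by the set-difference part of Theorem~\ref{th:BPGF2}, again a short GF of polynomial length and index $s+1$; applying the inclusion to this complement gives $\lnot\L\in\SigmaPA_1$, i.e.\ $\L\in\PiPA_1$. So it suffices to show: the support of any finite-support $g\in\GF_{1,s}$ is definable by an $\ex$-formula of length $\polyin(\phi(g))$ in a number of variables bounded only in terms of $s$.

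Given $g=\sum_{i=1}^M\frac{c_i\ts t^{a_i}}{(1-t^{b_{i1}})\cdots(1-t^{b_{ik_i}})}$, first normalize: using $\frac{1}{1-t^{-b}}=\frac{-t^{b}}{1-t^{b}}$ one may assume all $b_{ij}>0$, absorbing signs and shifts into $c_i,a_i$ and discarding negative-exponent monomials at the end (permissible since $\supp(g)\subseteq\N$). Then $[\t^n]g=\sum_i c_i\ts r_i(n)$ with $r_i(n)=\#\{\m\in\N^{k_i}:\ a_i+b_{i1}m_1+\dots+b_{ik_i}m_{k_i}=n\}$, each $r_i(n)$ a count of lattice points in a rational simplex, and $\supp(g)=\{n\in\N:\ \sum_i c_i\ts r_i(n)=1\}$. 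By Theorem~\ref{th:BPGF1} this set lies in $[0,N)$ with $\log N\le\polyin(\phi(g))$, a harmless polynomial-length constraint to carry along. Observe the \emph{easy} case: if all $c_i$ were nonnegative integers, then "$[\t^n]g=1$'' is equivalent to "$[\t^n]g\ge 1$'' (the coefficients being $0/1$), hence to $\bigvee_i\ \ex\ts\m\ge 0\ (a_i+b_{i1}m_1+\dots+b_{ik_i}m_{k_i}=n)$, which collapses to a single $\ex$ over $\le s$ variables together with $O(\log M)$ selector bits — an $\ex$-formula of the required shape.

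The main step for the general case is to put $g$ into a \emph{positivity normal form}: to rewrite $\supp(g)$ as a union of $\polyin(\phi(g))$ sets, each of the form $\{x\in\Z:\ \ex\ts\y\ (x,\y)\in P_j\cap\Z^{d}\}$ with $P_j\subseteq\R^{d}$ described by $O(1)$ inequalities of bit-length $\polyin(\phi(g))$ in a fixed dimension $d=d(s)$ — that is, to realize $\supp(g)$ as a projection (not merely a specialization) of the lattice points of polynomially-described polyhedra in bounded dimension. Granting this, existentially quantifying the at most $d$ forgotten coordinates together with $O(\log\polyin(\phi(g)))$ selector bits yields a single $\ex$-formula of length $\polyin(\phi(g))$ in boundedly many variables, which is precisely $\SigmaPA_1$-definability, and by Proposition~\ref{prop:W} and Theorem~\ref{th:W} this is consistent with the "union of polynomially many intervals'' picture of the quantifier-free fragment. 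To build the decomposition one would use Theorem~\ref{th:BW} to turn each projection of polytope lattice points back into a short GF and Lemma~\ref{lem:compress} to keep the number of variables bounded; the instructive testbed is $s\le 1$, where everything reduces to analysing a signed sum of arithmetic progressions and collapsing the finitely many congruences that arise modulo their least common multiple, whose bit-length is at most $\sum_{i,j}\phi(b_{ij})\le\phi(g)$.

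The hard part will be the cancellation carried by the negative $c_i$. After clearing denominators and separating positive from negative coefficients, "$\sum_i c_i\ts r_i(n)=1$'' is an \emph{equality between two lattice-point counts}, a $\sharpP$-flavored predicate that is not on its face existential; the whole difficulty is to exploit that $g$ is a genuine $0/1$-valued generating function to show that, for such $g$, this identity admits a short existential certificate. Already for $s=1$ the obstruction surfaces as the need to describe with boundedly many quantifiers a Boolean combination whose disjunctive normal form may involve exponentially many residue classes modulo an lcm of polynomial bit-length; showing that the $0/1$ constraint forces these unions to be tame (themselves projections of bounded-complexity polyhedra) is the crux, and it is quite possible that this is no easier than the other conjectures of this paper. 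A resolution of this step would settle Conjecture~\ref{conj:short_in_ex} and, together with Theorem~\ref{th:W}, pin down $\Gzero$ strictly between $\SigmaPA_0$ and $\SigmaPA_1\cap\PiPA_1$.
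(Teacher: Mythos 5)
The statement you are asked to prove is labelled a \emph{conjecture} in the paper, and the paper offers no proof of it; on the contrary, the authors explicitly describe it as ``possibly a wishful thinking'' whose validity is hard to judge. So there is nothing in the paper to compare your argument against, and the only question is whether your proposal closes the problem. It does not, and you say so yourself. The sound parts of your outline are the reduction to the single inclusion $\Gzero\subseteq\SigmaPA_1$ (the paper makes the same remark: $\Gzero$ is closed under box-complement by Theorem~\ref{th:BPGF2}, so $\PiPA_1$-membership follows from $\SigmaPA_1$-membership of the complement) and the observation that when all $c_i$ are nonnegative the $0/1$ constraint turns ``$[t^n]g=1$'' into ``$[t^n]g\ge 1$'', which is a disjunction of existential statements and hence (since $\ex$ distributes over $\lor$, you do not even need selector bits) a single $\ex$-formula in at most $s$ variables of length $\polyin(\phi(g))$.

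The gap is your ``positivity normal form'' step, and it is not a technical lemma you have deferred --- it \emph{is} the conjecture. Rewriting $\supp(g)$ as a polynomial union of projections of boundedly many polyhedra in fixed dimension is precisely the assertion $\Gzero\subseteq\SigmaPA_1$ restated via Theorem~\ref{th:BW} and Proposition~\ref{prop:W}; invoking Theorem~\ref{th:BW} and Lemma~\ref{lem:compress} to ``build the decomposition'' is circular, since those results go in the opposite direction (from polyhedra to short GFs, not from an arbitrary short GF back to polyhedra). The cancellation among terms with negative $c_i$ is exactly the phenomenon the paper identifies as the obstacle to understanding short GFs at all: a short GF is an alternating sum of generalized arithmetic progressions, and nobody knows how to certify existentially that the signed count equals~$1$. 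Note also that the paper's own evidence cuts against an easy resolution: by Proposition~\ref{prop:best-case}, your claimed inclusion would imply Conjecture~\ref{conj:c-k} and hence Conjecture~\ref{conj:squares_long}, so a complete proof along your lines would in particular establish unconditionally that $\textsc{SQUARES}\notin\Gzero$ and that $\Gzero\subsetneq\Ppo$. Your write-up is an honest description of why the conjecture is plausible and where it is hard, but it is not a proof.
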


To rephrase, this conjecture says that for every fixed $s$, there is an $n = n(s)$ so that every $g \in \GF_{1,s}$ of finite support has an $\ex$-formula representation:
\begin{equation}\label{eq:SigmaPA1}
G = \{x : \ex \y \in \N^{n} \;\; \Phi(x,\y) \}, \quad \GFof(G;t) = g(t) \quad \text{and} \quad \phi(\Phi) \le \polyin(\phi(g)).
\end{equation}
Note that it would be enough to show $\Gzero \subseteq \SigmaPA_{1}$, because $\Gzero$ is closed under taking complement of short GFs.

%% \begin{prop}
%% If Conjecture~\ref{conj:short_in_ex} holds, we also have $\GF_{1,s} \polysubset \ex^{n} \cap \for^{n}$.
%% \end{prop}

%% \begin{proof}
%% Observe that $\GF_{1,s}$ is closed under negation, i.e., for every $f \in \GF_{1,s}$, we also have $\lnot f \in \GF_{1,s}$, where $\lnot f(t) = \frac{1}{1-t} - f(t)$.
%% Conjecture~\ref{conj:short_in_ex} asserts that $\GF_{1,s} \polysubset \ex^{n}$.
%% Since $\lnot f \in \GF_{1,s}$, we have $\lnot f \in_{\text{poly}} \ex^{n}$.
%% Taking negation, we also have $f \in_{\text{poly}} \for^{n}$.
%% So $f \in_{\text{poly}} \ex^{n} \cap \for^{n}$ for every $f \in \GF_{1,s}$.
%% %%Taking negation, $\GF_{1,s} = \lnot \GF_{1,s} \polysubset \lnot\ex^{n} = \for^{n}$.
%% This implies $\GF_{1,s} \polysubset \ex^{n} \cap \for^{n}$.
%% \end{proof}

\begin{prop}\label{prop:best-case}
Conjecture~\ref{conj:short_in_ex} implies Conjecture~\ref{conj:c-k}, which implies Conjecture~\ref{conj:squares_long}.
\end{prop}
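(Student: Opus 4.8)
The plan is to show the first implication, $\textsc{Conjecture~\ref{conj:short_in_ex}} \Rightarrow \textsc{Conjecture~\ref{conj:c-k}}$; the second implication is already Proposition~\ref{prop:c-k_implies_squares}. So assume $\Gzero \subseteq \SigmaPA_1$, fix $s$ and $k$, and let $g(t) \in \GF_{1,s}$ be an arbitrary short GF of finite support. We must produce a $c>0$, depending only on $s$ and $k$, so that $g$ has the $(c,k)$-property, i.e.\ either $|\supp(g)| < \phi(g)^c$ or $\supp(g)$ contains a $k$-term arithmetic progression.

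First I would invoke the hypothesis: since $g \in \GF_{1,s}$, the finite set $G = \supp(g)$ has an $\ex$-formula representation $G = \{x : \ex \y \in \N^n \;\; \Phi(x,\y)\}$ with $n = n(s)$ fixed and $\phi(\Phi) \le \polyin(\phi(g))$; say $\phi(\Phi) \le \phi(g)^{c_0}$ for a constant $c_0 = c_0(s)$. Now I apply Theorem~\ref{th:ex_AP} with this fixed $n$ and the given $k$: that theorem supplies a polynomial $P = P_{n,k}$ such that any such $\ex$-formula defining a set of cardinality at least $P(\phi(\Phi))$ necessarily contains a non-trivial $\AP_k$. Chasing the bounds: if $|G| \ge P(\phi(g)^{c_0})$, then in particular $|G| \ge P(\phi(\Phi))$, so $\supp(g)$ contains an $\AP_k$. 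Since $P$ is a polynomial, $P(\phi(g)^{c_0}) \le \phi(g)^{c}$ for a suitable constant $c = c(s,k)$ (taking $c$ larger than $c_0 \cdot \deg P$ and absorbing the leading coefficient, valid once $\phi(g)$ exceeds an absolute constant; the finitely many small cases can be folded into $c$ as well). Hence whenever $|\supp(g)| \ge \phi(g)^c$ we get an $\AP_k$ in $\supp(g)$, which is exactly the $(c,k)$-property. This $c$ depends only on $s$ (through $n(s)$ and $c_0(s)$) and on $k$, as required by Conjecture~\ref{conj:c-k}.

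The only subtlety — and the step I would be most careful about — is the uniformity of the constants: Theorem~\ref{th:ex_AP} hands back a polynomial $P$ for each fixed $(n,k)$, and Conjecture~\ref{conj:short_in_ex} hands back an $n$ and an implied polynomial degree for each fixed $s$, so one must check that composing these gives a single constant $c$ valid for \emph{all} $g \in \GF_{1,s}$ simultaneously, not one $c$ per $g$. This works because both quantifications are "for every fixed $s$ (resp.\ $n,k$), there exists a bound uniform in the input," so the composition is again of that shape. With that checked, the first implication is complete, and chaining with Proposition~\ref{prop:c-k_implies_squares} yields the full statement.
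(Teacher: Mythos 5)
Your proposal is correct and follows essentially the same route as the paper: invoke Conjecture~\ref{conj:short_in_ex} to get an $\ex$-formula of length at most $\phi(g)^{d}$ for a constant $d=d(s)$, apply Theorem~\ref{th:ex_AP} with the resulting fixed $n$ and the given $k$, and compose the two polynomial bounds to obtain a single exponent $c=c(s,k)$, with the second implication supplied by Proposition~\ref{prop:c-k_implies_squares}. The uniformity point you flag is exactly the one the paper handles by setting $c=\ga d$.
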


\begin{proof}
  Assume Conjecture~\ref{conj:short_in_ex} holds.
Then for every fixed $s$, we have $n = n(s)$ for which every $g \in \GF_{1,s}$
has an $\ex$-formula representation~\eqref{eq:SigmaPA1}.
The last condition means there is a constant $d=d(s)$ such that $\phi(\Phi) < \phi(g)^d$.
By Theorem~\ref{th:ex_AP}, there exists $\ga = \ga(n,k) > 0$ so that $G$ contains an $\AP_k$ whenever $|G| > \phi(\Phi)^{\ga}$.
So if $|\supp(g)| \ge \phi(g)^{\ga d}$ then $|G| = |\supp(g)| \ge \phi(g)^{\ga d} > \phi(\Phi)^{\ga}$, which implies that $G$ contains an $\AP_k$.
So $c = \ga d$ satisfies Conjecture~\ref{conj:c-k}, which should depend only on $s$ and $k$.
By Proposition~\ref{prop:c-k_implies_squares}, Conjecture~\ref{conj:c-k} implies Conjecture~\ref{conj:squares_long}.
\end{proof}

The picture below illustrates the relative relations between short GFs and $\Pr$ formulas, assuming Conjecture~\ref{conj:short_in_ex}:

\vspace{-1em}
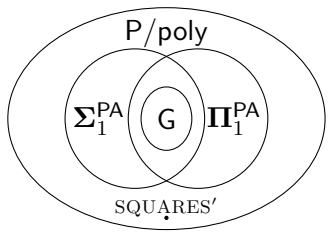
\begin{figure}[!h]
\centering
\begin{tikzpicture}[scale=1.2]

%%\draw (0,0) ellipse (70pt and 50pt);
\draw (0,0) ellipse (50pt and 35pt);
\draw (-0.35,0) circle (22pt);
\draw (0.35,0) circle (22pt);
\draw (0,0) ellipse (8pt and 10pt);

%%\node (NP) at (0,1.45) {\large$\NP\po = \ex\for$};
\node (ex) at (-0.75,0) {$\SigmaPA_{1}$};
\node (for) at (0.75,0) {$\PiPA_{1}$};
\node (GF) at (0,0) {$\Gzero$};
\node (P) at (0,0.95) {$\poly\po$};
\node (SQUARES') at (0,-0.98) {\tiny$\textsc{SQUARES}'$};
\draw[black,fill=black] (0,-1.1) circle (.5pt);

\end{tikzpicture}
\caption{Short GFs vs. PA formulas.
%%Here $\SigmaG_{2} = \NP\po$.
Here $\textsc{SQUARES}'$ is the language defined in Remark~\ref{rem:both_not_enough}.
}
\label{pic:complexity}
\end{figure}

%% Denote by $\SigmaPA_{1}$ ($\PiPA_{1}$) the union of all classes $\ex^{n}$ ($\for^{n}$) for all $n > 0$.

One can of course define analogues of $\SigmaPA_{1}$ and $\PiPA_{1}$ with more alternating quantifiers.
But it turns out that $\SigmaPA_{k+1} = \SigmaG_{k+1} = \SigmaP_{k}\po$ for every $k \ge 1$. This was implicit in Lemma~\ref{lem:PH_to_Pr} and theorems~\ref{th:PH_to_GF},~\ref{th:GH_PHpo}.
For the sake of completeness, we call the hierarchy of all classes $\SigmaPA_{k}$ and $\PiPA_{k}$ as $\GPA$. Obviously $\GPA = \GH = \PH\po$.

\medskip

\subsection{Complexity classes diagram}
The following diagram summarizes various complexity classes that appeared in this
paper and their relationships.  An arrow $X \to Y$ indicates $X \subseteq Y$.
Known strict subset relations are decorated with $\neq$.
Dashed arrows and segments denotes conjectural relationships.

\[
\begin{tikzcd}[row sep=scriptsize]
\GPA \arrow[r, white, "\text{\large =}" {black,description}]  &\GH \arrow[r, white, "\text{\large =}" {black,description}] &\PH\po  \\
\SigmaPA_{3} \arrow[u,dotted] \arrow[r, white, "\text{\large =}" {black,description}]  &\SigmaG_{3} \arrow[u,dotted]  \arrow[r, white, "\text{\large =}" {black,description}]  &\SigmaP_{2}\po \arrow[u,dotted]\\
\SigmaPA_{2} \arrow[u] \arrow[r, white, "\text{\large =}" {black,description}] &\SigmaG_{2} \arrow[u] \arrow[r, white, "~~\text{\large =}" {black,description}] & \SigmaP_{1}\po \arrow[u]\\
~ &\unique\PiG_{1} \arrow[u] \arrow[r, white, "~~\text{\large =}" {black,description}] &\unique\poly\po \arrow[u]\\
~ &~ &\Ppo \arrow[u] &\hspace{-3.2em}\ni {\small\textsc{SQUARES}} \arrow[ddlll, dash, bend left=12, "\not\ni"'] \arrow[dddll, dashed, red, dash, bend left=12, "\not\ni?"]\\
~ &\SigmaG_{1} \arrow[uu] \arrow[ur]  &~\\
\SigmaPA_{1} \arrow[uuuu, "\text{\rotatebox{90}{$\neq$}}"] \arrow[ur] \\
~ &G \arrow[uu] \arrow[ul, dashed, red, "?"']\\
\SigmaPA_{0} \arrow[uu, "\text{\rotatebox{90}{$\neq$}}"] \arrow[ur, "\text{\rotatebox{29}{$\neq$}}"']
\end{tikzcd}
\]

\bigskip

\nin
$
\begin{matrix*}[l]
\text{$\SigmaPA_{k+1}=\SigmaG_{k+1}=\SigmaP_{k}\po$, $k \ge 1$: sections~\ref{sec:GF}, \ref{sec:rel}.}  & & &  \text{$\textsc{SQUARES} \stackrel{?}{\notin} \Gzero$: Conjecture~\ref{conj:squares_long}.}\vspace{.2em}\\

\text{$\unique\PiG_{1}=\unique\Ppo$: Remark~\ref{rem:UP}.} & & & \text{$\textsc{SQUARES} \notin \SigmaPA_{1}$: Proposition~\ref{prop:ex_not_enough}.}\vspace{.2em}\\

\text{$\SigmaG_{1} \subseteq \Ppo$: Proposition~\ref{prop:supp_proj_is_P}.} & & & \text{$\SigmaPA_{0} \subsetneq \SigmaPA_{1} \subsetneq \SigmaPA_{2}$: Remark~\ref{rem:PA_strict}.}\\

 & & &\text{$\SigmaPA_{0} \subsetneq \Gzero \stackrel{?}{\subseteq} \SigmaPA_{1}$: Section~\ref{sec:rel}.}
\end{matrix*}
$

\bigskip

\section{Proof of Lemma~\ref{lem:compress}} \label{s:lemma-proof}

Let $\ov \x = (\x_{1}, \dots, \x_{k})$ be the array of multi-variables of dimension $n_{1}, \dots, n_{k}$.
We first prove the result when $k=1$, i.e., when  $\ov \x = \x_{1}$, $g(\ov \t) = \sum \t_{1}^{\x_{1}}$ and $f(u) = \sum u_{1}^{z_{1}}$.
For convenience, we denote $\t_{1},\.\x_{1},\.u_{1},\.z_{1}$ by $\t,\.\x,\.u$ and $z$ respectively.
Also denote by $n$ the dimension of the multi-variable $\x$.
So $g(\t) = \sum \t^{\x}$ and
\begin{equation*}
\tau_{N}(\x) = x_{1} + N x_{2} + \dots + N^{n-1} x_{n}.
\end{equation*}

{\bf Part a)} Assume we are given $g \in \GF_{n,s}$.
By Theorem~\ref{th:BPGF1}, we can find the norm $N$ of $g$ in time $\polyin(\phi(g))$.
By rounding $N$ to the next power of $2$,  we still have
$\ts \log N \le \polyin(\phi(g))\ts$ and $\ts\supp(g) \subseteq [0,N)^{n}$.
Let $N = 2^{\ellR}$.
We define $f(u)$ be the specialization of $g(\t)$ under the following substitutions:
\[
t_{1} \gets u, \; t_{2} \gets u^{N}, \ldots, \; t_{n} \gets u^{N^{n-1}},
\]
so that
\[
\t^{\x} \. = \. u^{x_{1} + N x_{2} + \ldots + N^{n-1} x_{n}} \. = \. u^{\tau_{N}(\x)}.
\]
Clearly, we have:
\begin{equation*}
\supp(f) \,=\, \tau_{N}(\supp(g)).
\end{equation*}
By Theorem~\ref{th:BPGF1}, polynomial substitutions can be performed in polynomial time and gives $f$ as a short GF in $\GF_{1,s}$ with $\phi(f) \le \polyin(\phi(g))$.
This proves part a).
\smallskip

{\bf Part b)}
Given two power series $A(\t) = \sum \al_{\x} \t^{\x} \in \GF_{n,\ts p} \, , \, B(t) = \sum \be_{x} t^{x} \in \GF_{1,\ts q}$ and a linear map $\tau : \Z^{n} \to \Z$,
we define their \emph{$\tau$-Hadamard product} as
\begin{equation}\label{tauHadDef}
C(\t) = A(\t)  \tauHad B(t) \,\coloneqq\, \sum \al_{\x} \be_{\tau(\x)} \t^{\x}\..
\end{equation}
%%Here, we allow $A(\t)$ and $B(t)$ to have general coefficients, i.e., not necessarily $0/1$.

Now assume $f(u) = \sum u^{z} \in \GF_{1,s}$, $N = 2^{\ellR}$, and $\supp(f) \subseteq [0,N)^{n}$.
From the above definition, it is clear that such a $g(\t)$ satisfying~\eqref{eq:compress} can be obtained as:
\begin{equation}\label{eq:decompress}
g(\t) = a(\t) \star_{\tau_{N}} f(t),
\end{equation}
where
\[
a(\t) \. = \, \sum_{\x \in [0,N)^{n} } \t^{\x} \, = \, \frac{1-t_{1}^{N}}{1-t_{1}} \. \cdots \.\frac{1-t_{n}^{N}}{1-t_{n}}.
\]
with $a \in \GF_{n,n}$ and $\phi(a) \le \polyin(\log N)$.

Here the map $\tau_{N}$ is from Definition~\ref{def:tau_map}.
So it is enough to show that the $\tau$-Hadamard product of two short GFs is a short GF of polynomial length.
 The proof follows Barvinok's argument in \cite{B2} (see also lemmas 3.4 and~3.6 in \cite{BW}).
 First, notice that the $\tau$-Hadamard product is bilinear in $A(\t)$ and $B(t)$.
Therefore, we only need to show that $C(\t)$ is a short GF when $A(\t)$ and $B(\t)$ have only $1$ term each, i.e., when:
\begin{equation}\label{singleterm}
A(\t) \, = \, \frac{\t^{\a}}{\prod_{i=1}^{p} (1 - \t^{\b_{i}})} \ \quad \text{and} \quad B(t) \, = \, \frac{t^{c}}{\prod_{j=1}^{q} (1-t^{d_{j}})}\..
\end{equation}

Consider an (unbounded) polyhedron $\ts P \subset \R^{p + q}\ts$ with coordinates $\ts (\zeta_{1}, \dots, \zeta_{p}, \xi_{1}, \dots, \xi_{q})$, defined as:
\begin{equation}\label{HadamardPolytope}
P \coloneqq
\begin{Bmatrix}
\zeta_{1},\dots,\zeta_{p},\,\xi_{1},\dots,\xi_{q} &\ge &0 \\
\tau(\a + \zeta_{1} \b_{1} + \dots + \zeta_{p} \b_{p}) &= &c + \xi_{1} d_{1} + \dots + \xi_{q} d_{q}
\end{Bmatrix}.
\end{equation}
By Theorem~\ref{th:Barvinok}, we can write a short GF for $P \cap \Z^{p+q}$:
\begin{equation}\label{diagonal}
D(\w,\v) \coloneqq \sum_{(\zzeta,\xxi) \in P} \w^{\zzeta} \v^{\xxi} = \sum_{(\zzeta,\xxi) \in P} (w_{1})^{\zeta_{1}} \dots (w_{p})^{\zeta_{p}} (v_{1})^{\xi_{1}} \dots (v_{q})^{\xi_{q}}.
\end{equation}
Furthermore, we have $D \in \GF_{p+q,\ts p+q}\.$.
By \eqref{singleterm}, the expansions of $A(\t)$ and $B(t)$ are:
\begin{equation}\label{expansion}
A(\t) = \sum_{\zzeta \ge 0} \t^{\a + \zeta_{1} \b_{1} + \dots + \zeta_{p} \b_{p}} \quad \text{and} \quad B(t) = \sum_{\xxi \ge 0} t^{c + \xi_{1} d_{1} + \dots + \xi_{q} d_{q}}.
\end{equation}
We substitute:
\[
w_{1} \gets \t^{\b_{1}} , \dots , w_{p} \gets \t^{\b_{p}}, v_{1} \gets 1 , \dots , v_{q} \gets 1.
\]
By~\eqref{HadamardPolytope},~\eqref{diagonal} and~\eqref{expansion}, we get:
\[
\t^{\a} D(\t^{\b_{1}}, \dots , \t^{\b_{p}}, 1, \dots , 1) = \sum_{(\zzeta,\xxi) \in P} \t^{\a + \zeta_{1} \b_{1} + \dots + \zeta_{p} \b_{p}} = A(\t) \tauHad B(t) = C(\t).
\]
By Theorem~\ref{th:BPGF1}, substitution can be done in polynomial time, and results in a short GF $C(\t)$ of index at most $p+q$.
Hence, we have $C(\t) \in \GF_{n,p+q}$ and $\phi(C) \le \polyin(\phi(A) + \phi(B))$.
Note that by taking the $\tau$-Hadamard product, the index of $C$ is increased to $p+q$.
This pushes the index of $g$ in~\eqref{eq:decompress} to $n+s$.
So we do not get back exactly the index $s$ for $g$.
But $n+s$ is still a constant, and $g$ is still a short GF in a fixed class $\GF_{n,n+s}$.
\smallskip

This completes the proof for the case $k=1$. The general case can be handled similarly.

\bigskip

\section{Final remarks and open problems} \label{sec:fin-rem}

\subsection{}  \label{ss:finrem-bar}
As we mentioned in the introduction, much of this work is
motivated by Barvinok's program implicit in his writing.  Specifically,
we were inspired by the following quote:

\smallskip

\qquad ``It seems hard to prove that a particular finite, but large, set $S\ssu \zz^d$ does not

\qquad  admit a short rational generating function: if a particular candidate expression

\qquad  for $f_S(\bx)$ is not short, one can argue that we have not searched hard enough

\qquad and that there is another, better candidate.''~\cite{B2}

\smallskip

\nin
In fact, this paper originally began as a followup on~\cite{NP1}, aiming
to explain why the technology of short GFs was unable to derive the
Barvinok--Woods theorem (Theorem~\ref{th:BW}) (cf.~\cite{NP1}).
Our theorems~\ref{th:main_2} and~\ref{th:main_1} are strong versions of
this claim.

Let us also mention  Theorem~1.1 and Corollary~1.11
in~\cite{NP-kannan} which have similar setup of unions and projections
of polyhedra, and give strong algorithmic extensions
of Woods's theorem (Theorem~\ref{th:GF_NP_hard}).

Finally, our most recent results in~\cite{NP-hard} say that Presburger Arithmetic with a bounded number of variables and inequalities is
complete for every level in $\PH$, which suggests an even deeper
obstacle to taking unions and projections.  We have yet to fully explore
the implications of this result which go beyond the scope of this paper.

\subsection{}  \label{ss:finrem-AP}
In notations of the introduction, a short GF $f_S(t)$ of a set $S \ssu \nn$ can be viewed as a
presentation of $S$ by an alternating sum of generalized
($k$-dimensional) arithmetic progressions.  As such, there are many
connections between short GFs and Arithmetic Combinatorics, which
are yet to be explored (cf.~\cite{TV}).  For example, when $k=1$,
taking the positive part of these arithmetic progressions corresponds
to variants of Erd\H{o}s's \emph{covering systems} which received much attention
in recent years (see~\cite{Guy,Hough}).

Conjecture~\ref{conj:squares-intro} has an especially classical feel with
its claim that squares and (generalized) arithmetic progression are incompatible.
There are of course  both classical and recent works on squares in arithmetic
progressions, but no known results seem strong enough to apply in this case
(see~\cite{BGP,Sze,Weil}).

%
% In particular, this gives a polynomial time algorithm for the
% \emph{Frobenius problem} on counting integers not representable
% as sums of integers from a given  finite set~$A$.
% See~\cite{RA} for the detailed study of the Frobenius problem.

\subsection{} \label{ss:finrem-converse}
There are two ways to think of the results in this paper.
First and foremost, they provide a very strong evidence in favor of
non-polynomiality of projections and other operations with short GFs.
In the opposite direction, the apparent connection to arithmetic
progressions and a plethora of both analytic and combinatorial tools for
working with them suggest a possibility of some lower bounds.

We would like to caution the reader. Initially we were rather
optimistic about removing complexity assumptions in
Theorem~\ref{th:main_1} by finding a direct proof of
Conjecture~\ref{conj:squares_long} or some other similar
lower bound. However, Proposition~\ref{prop:partial_converse} and Remark~\ref{rem:hard}
seem to suggest that this might be rather difficult.
A sufficiently strong argument that shows $\Gzero \subsetneq \GH$ could potentially show
$ \unique\Ppo = \unique\PiG_{1} \subsetneq \GH$, which implies $\sharpP \not\subseteq \FPpo$,
an important open problem (see~$\S$\ref{ss:finrem-factoring} below).

On the other hand, the two lowest level $\Gzero$ and $\SigmaG_{1}$ in $\GH$ seems to behave quite differently from higher ones.
So an elementary approach to prove $\Gzero  \subsetneq \GH$ is not completely ruled out.

% Either way, the relative complexity of short GFs seem to be of interest in
% its own right even without applications to Number Theory and Discrete Geometry.
% We intend to continue pursuing this subject in the future.

\subsection{}\label{ss:finrem-rel}
The idea of Section~\ref{sec:rel} is to characterize all short~GFs.  Roughly,
Conjecture~\ref{conj:short_in_ex} says that every short~GF is the projection of
a union of polynomially many polyhedra of bounded dimension.
This can viewed as a converse of the Barvinok--Woods theorem (Theorem~\ref{th:BW}).

Conjecture~\ref{conj:short_in_ex} is possibly a wishful thinking.
Unfortunately, its validity is hard to judge since we have so few
explicit constructions of short~GFs other than projections of
integer points in polyhedra. If true,
Proposition~\ref{prop:best-case} implies Conjecture~\ref{conj:squares-intro}
and removes the complexity assumptions from all theorems in the
introduction.  Moreover, it implies exponential lower bounds on
the length of short~GF for squares, projections and other theorems
in the introduction.\footnote{In the chain of reductions, the exponential factor
appears in the proof of Proposition~\ref{prop:c-k_implies_squares}.}
These are the same bounds the \emph{exponential time hypothesis}
(ETH) implies.

\subsection{}\label{ss:finrem-factoring}
It is worth comparing theorems~\ref{th:squares-factoring}
and~\ref{t:squares-sharp} from the computational complexity
point of view.
Technically speaking, these two results are not comparable.
However, one is weaker than the other in the relative sense, as follows.

%% First note that $\BPP \subseteq \Ppo$ by Adleman's theorem
%% (see e.g.~\cite{MM,Pap}).

Recall that \textsc{INTEGER FACTORING} $\in\NP \cap\coNP$.
While proving it to be in $\BPP$ would
be a very strong result beyond the current state of art, it would not directly lead to
a collapse of $\PH$.  In fact, the experts seem
to be split on whether \textsc{INTEGER FACTORING} is in~$\Pp$, all the while espousing
a deep-seated belief that $\Pp=\BPP$, thus further muddling the subject
(see~\cite{Aar,Gas}).  In summary, Theorem~\ref{th:squares-factoring}
gives a relatively weak evidence in favor of Conjecture~\ref{conj:squares_long}.

On the other hand, $\SP$-complete oracles are very powerful by Toda's theorem,
and thus very unlikely to be in~$\FPpo$.
As mentioned in Remark~\ref{rem:strong_collapse}, $\SP \subseteq \FPpo$ would lead to a collapse of $\PH$ the second level.
In other words, Theorem~\ref{t:squares-sharp} gives a very strong evidence in favor of Conjecture~\ref{conj:squares_long}.

\bigskip

\vskip.56cm

\subsection*{Acknowledgements}
We are grateful to Matthias Aschenbrenner, Sasha Barvinok, Boris Bukh,
Terry Tao, Kevin Woods, Josh Zahl and the anonymous referees for many helpful remarks on the subject.
We are also thankful to Joshua Grochow, Emil Jer\'abek for help
with complexity questions.
The second author was partially
supported by the~NSF.

%\medskip

%\bigskip

\newpage

{\footnotesize

%}

\newpage

}\end{document}